\documentclass[11pt]{article}
\usepackage{latexsym,amssymb,amsmath,a4wide,theorem}
\usepackage{graphicx}
\usepackage{subfig}
\usepackage{caption}
\usepackage{color}
\usepackage{extarrows}
\usepackage{esint}

\numberwithin{equation}{section}
\numberwithin{figure}{section}
\newtheorem{theorem}{Theorem}[section]
\newenvironment{proof}{{ \it Proof:\quad}}{\hfill $\blacksquare$\par}
\newtheorem{lemma}{Lemma}[section]
\newtheorem{remark}{Remark}[section]
\newtheorem{proposition}{Proposition}[section]

\headheight=7pt \topmargin=0pt \textheight=630pt \textwidth=432pt
\oddsidemargin=18pt \evensidemargin=18pt \pagestyle{plain}

\title{Nonlinear Stability for the Superposition of Viscous Contact Wave and Rarefaction Waves to Non-isentropic Compressible Navier-Stokes System with General Initial Perturbations}
\date{  }
\author{Yi Peng$^1$, Xiaoding Shi$^{1*}$, Yuhang Wu$^2$\\
\scriptsize$^{1}$ {College of Mathematics and Physics, Beijing University of Chemical Technology, Beijing, 100029, China}\\
\scriptsize$^{2}$ {Department of Mathematics, City University of Hong Kong, Hong Kong, China}
}

 \begin{document}
\maketitle

\begin{abstract}
In this paper, the large time behavior of the solutions for the Cauchy problem to the one-dimensional compressible Navier-Stokes system with the motion of a viscous heat-conducting perfect polytropic gas is investigated.
Our result shows that  the combination of a viscous contact wave with rarefaction waves  is asymptotically stable, when the large initial disturbance of the density,  velocity and  temperature belong to $H^{1}(\mathbb{R})$, $L^{2}(\mathbb{R})\cap L^{4}(\mathbb{R})$ and $L^{2}(\mathbb{R})$, provided the strength of the combination waves is suitably small. In addition, the initial disturbance on the derivation of velocity and  temperature belong to $L^{2}(\mathbb{R})$ can be arbitrarily large.
\end{abstract}

\noindent{\bf Keywords}: compressible Navier-Stokes system;  contact discontinuity; rarefaction waves; asymptotic stability; non-isentropic

\

\noindent{\bf AMS subject classifications:} 35Q35; 35B65; 76N10; 35M10; 35B40; 35C20; 76T30

\

\renewcommand{\thefootnote}{\fnsymbol{footnote}}
\footnotetext[1]{{Corresponding author. The corresponding author is supported by National Natural Science Foundation of China, ( No. 12171024).}\\
{Email address: apengyi@163.com (Y. Peng),  shixd@mail.buct.edu.cn (X. Shi),
yuhangwu2-c@my.cityu.edu.hk (Y. Wu)}}

\section{Introduction}
\indent\qquad
 We consider the one-dimensional compressible Navier-Stokes system in Lagrangian coordinates:
\begin{gather}
v_{t}-u_{x}=0,\label{01}\\
u_{t}+p_{x}=\mu\left(\frac{u_{x}}{v}\right)_{x},\label{02}\\
\big( e+\frac{u^{2}}{2} \big)_{t}+(p u)_{x}=\big( \kappa\frac{\theta_{x}}{v}
+\mu\frac{u u_{x}}{v} \big)_{x},\label{03}
\end{gather}
where $t>0$ is time, $x\in\mathbb{R}=(-\infty,+\infty)$ denotes the Lagrange mass coordinate, and the unknown functions $v(x,t)>0$, $u(x,t)$ and 
 $p(x,t)$ are the specific volume of the gas, fluid velocity and pressure respectively. Here we study the ideal polytropic fluids so that $p$ and $e$ satisfy
\begin{gather}
    p=\frac{R\theta}{v}=Av^{-\gamma}e^{ \frac{\gamma-1}{R}s}, \quad
    e=c_{\nu}\theta+\text{const},
\end{gather}
where $s$ is the entropy, $\gamma>1$ is the adiabatic exponent, $c_{\nu}=R/(\gamma-1)$ is the specific heat, $A$ and $R$ are both positive constants. The Cauchy problem to system \eqref{01}-\eqref{03} supplemented with the initial and far field conditions
\begin{equation}
\left. \begin{array}{l}
\displaystyle (v,u,\theta)(x,0)=(v_{0},u_{0},\theta_{0})(x)\xrightarrow{x\rightarrow\pm\infty}(v_{\pm},u_{\pm},\theta_{\pm}),\quad \ \quad x\in\mathbb{R},\label{23}
       \end{array} \right.\\
\end{equation}
where $v_{\pm}>0$, $u_{\pm}$ and $\theta_{\pm}>0$ are given constants. 

When the far field states satisfy $v_{+}=v_{-},u_{+}=u_{-},\theta_{+}=\theta_{-}$, there is huge literature on the studies of the global existence of solutions to the compressible Navier-Stokes system. Indeed, Kazhikhov and Shelukhin \cite{15a} first obtain the global existence of solutions in bounded domains with large initial data. From then on, significant progress has been made on the mathematical aspect of the initial boundary value problems, for details, see  Duan-Guo-Zhu \cite{5a}, Goodman \cite{8},  Jiang \cite{9}-\cite{10}, Li-Liang \cite{16}, Pan-Zhang \cite{23}, Huang-Shi \cite{6}, Huang-Shi-Sun \cite{7} and the references therein. It needs to be pointed out in particular,  Jiang \cite{9}--\cite{10} first proved that the specific volume is uniformly bounded from below and above in unbounded domains. Moreover, Li and Liang \cite{16} proved  the temperature is uniformly bounded and the large-time behavior of the solutions,   the cut-off function technique and the new weighted estimation method are used in \cite{16}.

However, it is much more complicated to obtain the  existence and large-time behavior of solutions for the system \eqref{01}-\eqref{03} and \eqref{23} with different end states, i.e. $(v_{+},u_{+},\theta_{+})\neq
(v_{-},u_{-},\theta_{-})$. The large-time behavior of solutions to \eqref{01}-\eqref{03} is closely related to the Riemann problem of the associated Euler equations:
\begin{equation}
\left\{ \begin{array}{l}
\displaystyle  v_{t}-u_{x}=0,\\ 
\displaystyle  u_{t}+p_{x}=0,\\
\displaystyle  \big(e+\frac{u^{2}}{2}\big)_{t}+(pu)_{x}=0,\\
\label{14}
\displaystyle (v,u,\theta)(x,0)=\left\{ \begin{array}{l}
(v_{-},u_{-},\theta_{-}),\quad x<0,\\ 
(v_{+},u_{+},\theta_{+}),\quad x>0, \end{array}\right. \end{array} \right.\\
\end{equation}
which is the most important hyperbolic system of conservation law. It is well known that the system \eqref{14} has three basic wave patterns (see Smoller \cite{smoller}), including two nonlinear waves such as shock and rarefaction wave, and a linearly degenerate wave called as contact discontinuity. 
The solutions consist of these  three wave patterns and their superpositions, called Riemann solutions of \eqref{14}, and govern both the local and large time asymptotic behavior of general solutions of the system \eqref{01}-\eqref{03}. Therefore, the research on the large-time behavior of the viscous version of these basic wave patterns and their superpositions to the compressible Navier-Stokes system \eqref{01}-\eqref{03}  is  important and challenging.

A large number of literature have been  devoted to the stability analysis of the viscous wave pattern to system \eqref{01}-\eqref{03}, such as, Huang-Matsumura-Shi \cite{HMS2004} presented the  asymptotic behavior for contact discontinuity of the compressible Navier-Stokes equations by constructing a nonlinear diffusive wave called as viscous contact discontinuity. Huang-Matsumura \cite{5c} proved that the stability of a composite wave of two viscous shock waves for the full compressible Navier-Stokes equation,  they removed the zero initial mass condition.  Huang-Li-Matsumura \cite{HLM2010} showed the combination  of a viscous contact wave with rarefaction waves is asymptotically stable by new estimates on the heat kernel. For more detailed work on the asymptotic stability of waves, for example,  see Matsumura-Nishihara \cite{opq}, Kawashima-Matsumura \cite{12}, Huang-Matsumura \cite{5c} for more results on shock waves,  see Matsumura-Nishihara \cite{mats}-\cite{hny}, Nishihara-Yang-Zhao \cite{Tyang} for more results on rarefaction waves results, and see Huang-Matsumura-Shi \cite{HMS2004}, Huang-Matsumura-Xin \cite{5e}, Huang-Xin-Yang \cite{5f}, Huang-Zhao \cite{5h}, Hong \cite{5} for more results on contact discontinuity waves, and so on.

However, it should be mentioned here that all results mentioned above are concerned with small perturbation around the viscous wave pattern. Thus, there results concern "local" stability. Nishihara-Yang-Zhao \cite{Tyang} first considered the stability on rarefaction waves for system \eqref{01}-\eqref{03} with "partially" large perturbation. In \cite{Tyang}, they considered the condition that the adiabatic exponent $\gamma$ is closing to $1$, where the perturbation around the rarefaction wave can be larger if $\gamma-1$ is smaller. Huang-Zhao \cite{5h} extended this result to that a single viscous contact wave and the combination of viscous contact wave and rarefaction waves for a free boundary value problem. The condition on $\gamma$ is crucial in Huang-Zhao \cite{5h} and Nishilara-Yang-Zhao \cite{Tyang}, but it is not natural in the physical setting. Recently, Huang-Wang \cite{5bb} proved that stability of superposition of viscous contact wave and rarefaction waves under large initial perturbation without any restriction on $\gamma$, provided the strength of the wave patterns $\delta$ is small enough.

Before stating the main results, we shall first recall the viscous contact wave $(V,U,\Theta)$ for the compressible Navier-Stokes system \eqref{01}-\eqref{03} introduced in Huang-Matsumura-Shi \cite{HMS2004}. For the Riemann problem \eqref{14}, it is known that there exists the unique entropy solution $(v^c,u^c,\theta^c)$ called as the contact discontinuity of the form
\begin{equation}
(v^c,u^c,\theta^c)(x,t)=\left\{ \begin{array}{l}
(v_{-},u_{-},\theta_{-}),\quad x<0, t>0,\\ 
(v_{+},u_{+},\theta_{+}),\quad x>0, t>0,\label{16}
       \end{array}\right.
\end{equation}
provided that 
\begin{gather}
    u_{-}=u_{+},\quad p_{-}\overset{\text{def}}{=}\frac{R\theta_{-}}{v_{-}}=p_{+}\overset{\text{def}}{=}\frac{R\theta_{+}}{v_{+}}.\label{71}
\end{gather}
Without loss of generality,  we assume that $u_{-}=u_{+}=0$. Thanks to the diffusion wave constructed in Huang-Matsumura-Shi \cite{HMS2004}, the viscous contact discontinuous wave $(V,U,\Theta)$ is introduced as following:
 \begin{gather}
    V=\frac{R}{p_{+}}\Theta,\quad U=\frac{\kappa(\gamma-1)}{\gamma R}\frac{\Theta_{x}}{\Theta},
    \quad \Theta=\Theta\Big(\frac x{\sqrt{1+t}}\Big),\label{18}
\end{gather}
where $\Theta$ is the self-similarity solution of the following heat equation
\begin{equation}\label{931}
\left\{ \begin{array}{l}  \displaystyle \Theta_{t}=\frac{\kappa p_{+}(\gamma-1)}{\gamma R^{2}}\big( \frac{\Theta_{x}}{V} \big)_{x}, \\
   \displaystyle  \lim_{x\rightarrow\pm \infty}\Theta(x,t)=\theta_{\pm},
\end{array}\right. \end{equation}
which has a unique self-similar solution $\Theta(x,t)=\Theta({\xi})$, $\xi=x/\sqrt{1+t}$ duo to Hsiao-Liu \cite{15}.
Furthermore, $\Theta(\xi)$ is a monotone function, increasing if $\theta_{+}>\theta_{-}$ and decreasing if $\theta_{+}<\theta_{-}$. On the other hand, there exists some positive constant $\delta$ such that, for $\delta=\mid \theta_{+}-\theta_{-} \mid$, $\Theta$ satisfies
\begin{gather}
    (1+t)| \Theta_{xx}|+(1+t)^{\frac12}| \Theta_{x}|+|\Theta-\theta_{\pm}|
    =O(1)\delta e^{-\frac{c_{1}x^{2}}{1+t}},\label{1.12}
\end{gather}
as $| x| \to \infty$, where $c_{1}$ is a positive constant depending only on $\theta_{\pm}$. Moreover, $(V,U,\Theta)$ satisfies
\begin{equation}
\left\{ \begin{array}{l}
\displaystyle V_{t}-U_{x}=0,\\ 
\displaystyle U_{t}+\big( \frac{R\Theta}{V} \big)_{x}=\mu\big(\frac{U_{x}}{V}\big)_{x}+R_{1},\\
\displaystyle c_{\nu}\Theta_{t}+p(V,\Theta)U_{x}=\kappa\big( \frac{\Theta_{x}}{V} \big)_{x}+\mu\frac{U_{x}^{2}}{V}+R_{2},
\label{22}
       \end{array} \right.\\
\end{equation}
where 
\begin{gather}
    R_{1}=U_{t}-\mu\big( \frac{U_{x}}{V} \big),\quad R_{2}=-\mu\frac{U_{x}^{2}}{V}.
\end{gather}
 Now, the perturbation near the viscous contact discontinuity wave $(V,U,\Theta)$ is represented by the following function  $(\phi,\psi,\zeta)(x,t)$ below 
  \begin{gather}
     (\phi, \psi, \zeta)(x,t)=(v-V, u-U, \theta-\Theta)(x,t).
 \end{gather}
 System \eqref{01}--\eqref{03} is supplemented with the following initial conditions
 \begin{gather}
     \big( \phi_{0}(x),~\psi_{0}(x),~ \zeta_{0}(x) \big)\in H^{1}(\mathbb{R}),\quad
     \inf_{x\in\mathbb{R}}v_{0}>0,~ \inf_{x\in\mathbb{R}}\theta_{0}>0.\label{98}
 \end{gather}
Now, the main results of this paper is given as following: 
\begin{theorem}\label{70}
(Viscous contact wave). Under the assumption of the initial condition \eqref{98}, for any given left end state $(v_{-},u_{-},\theta_{-})$, suppose that the right end state $(v_{+},u_{+},\theta_{+})$ satisfies \eqref{71}. Let $(V,U,\Theta)$ be the viscous contact wave defined in \eqref{18} with strength $\delta=\mid \theta_{+}-\theta_{-} \mid$. There exists a small constant $\delta_{0}$ depending on 
\begin{gather}
  \inf_{x\in \mathbb{R}}v_{0},~\inf_{x\in\mathbb{R}}\theta_{0},~\|\phi_{0}\|_{H^{1}(\mathbb{R})}, ~\|\psi_{0}\|_{L^{2}(\mathbb{R})},~\|\psi_{0}\|_{L^{4}(\mathbb{R})},~\|\zeta_{0}\|_{L^{2}(\mathbb{R})},\label{strong}
\end{gather}
such that if $\delta<\delta_{0}$,
 the Cauchy problem \eqref{01}--\eqref{03}, \eqref{23} admitting  a unique global solution $(v,u,\theta)$ satisfying 
\begin{gather}
    (v-V,~u-U,~\theta-\Theta)\in C((0,+\infty);H^{1}(\mathbb{R})),\notag\\ 
    (v-V)_{x}\in L^{2}(0,+\infty;L^{2}(\mathbb{R})),\notag \\
    (u-U,~\theta-\Theta)_{x}\in L^{2}(0,+\infty;H^{1}(\mathbb{R})).\notag
\end{gather}
Furthermore, 
\begin{eqnarray}
 &&   0<\inf_{(x,t)\in\mathbb{R}\times(0,+\infty)}v(x,t)\leq \sup_{(x,t)\in\mathbb{R}\times(0,+\infty)}v(x,t)<+\infty,\notag\\
 &&   0<\inf_{(x,t)\in\mathbb{R}\times(0,+\infty)}\theta(x,t)\leq \sup_{(x,t)\in\mathbb{R}\times(0,+\infty)}\theta(x,t)<+\infty,\notag
\end{eqnarray}
and 
\begin{gather}
   \lim_{t\to+\infty} \sup_{x\in\mathbb{R}}\big|(v-V,~u-U,~\theta-\Theta)(x,t)\big|=0.\label{large1}
\end{gather}
\end{theorem}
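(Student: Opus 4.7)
The plan is the standard continuation scheme: local existence plus uniform a priori estimates, bootstrapped to global existence and then to decay. Local existence of a classical solution on some interval $[0,T_0]$ with $(\phi,\psi,\zeta)(\cdot,t)\in H^1(\mathbb{R})$ and positive infima of $v,\theta$ follows from classical theory. The crux is to prove that on any existence interval $[0,T]$ one has uniform bounds $0<c_1\le v(x,t),\theta(x,t)\le c_2<\infty$ and $\sup_{t\in[0,T]}\|(\phi,\psi,\zeta)(\cdot,t)\|_{H^1}\le M$, with $c_1,c_2,M$ depending only on the quantities listed in \eqref{strong} but independent of $T$, provided $\delta<\delta_0$. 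Because the perturbation is \emph{large}, one may not absorb nonlinearities by smallness of the unknowns; only the smallness of $\delta$ is available to close the estimates.

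First I would derive the perturbation system by subtracting \eqref{22} from \eqref{01}--\eqref{03}. The basic step is a relative-entropy estimate obtained by multiplying the $\phi$-equation by $R\Theta(1-V/v)$, the $\psi$-equation by $\psi$, and the $\zeta$-equation by $c_\nu(1-\Theta/\theta)$, and integrating. This yields $L^2$ control of $(\phi,\psi,\zeta)$ together with a dissipation of $(\psi_x,\zeta_x)$, modulo error terms generated by $R_1,R_2$ and by the non-conservative interactions with $V_x,U_x,\Theta_x$, all of which are controlled through \eqref{1.12} and absorbed by taking $\delta$ small. The technical heart is the derivation of uniform pointwise bounds on $v$ via the Kazhikhov--Shelukhin / Jiang representation: integrating the momentum equation in time to express $\log(v/V)$ as a sum of integrals whose size is controlled by the basic estimate together with an independent $L^4(\mathbb{R})$ bound on $u=U+\psi$, obtained by multiplying the $\psi$-equation by $\psi^3$. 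This is precisely where the hypothesis $\psi_0\in L^4(\mathbb{R})$ enters the theorem. A maximum-principle argument on the $\zeta$-equation, in the spirit of Li--Liang \cite{16}, then yields the matching bounds on $\theta$.

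With pointwise bounds in hand I would run the first-order energy estimate by multiplying the $\psi$-equation by $-\psi_{xx}$ and the $\zeta$-equation by $-\zeta_{xx}$, closing $\sup_t\|(\psi_x,\zeta_x)(\cdot,t)\|_{L^2}^2+\int_0^t\|(\psi_{xx},\zeta_{xx})\|_{L^2}^2\,d\tau\le M$; the corresponding bound on $\phi_x$ follows from combining $\phi_t=\psi_x$ with the pointwise bounds on $v$. Throughout, smallness of $\delta$ is used solely to absorb source terms generated by the viscous contact wave, never to control the perturbation itself. Global existence then follows by continuation, and the large-time decay \eqref{large1} is deduced from $\int_0^{+\infty}\|(\phi_x,\psi_x,\zeta_x)(\cdot,\tau)\|_{L^2}^2\,d\tau<\infty$ together with an upper bound on the time derivative of this norm, via the standard lemma and the Sobolev embedding $H^1(\mathbb{R})\hookrightarrow L^\infty(\mathbb{R})$.

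The main obstacle is the pointwise-bound step, that is, securing uniform-in-time $L^\infty$ control of $v$ and $\theta$ \emph{without} the customary smallness of the perturbation. One must carefully cascade the basic $L^2$ relative-entropy estimate, the independent $L^4$ control of $u$, and the Lagrangian representation, keeping meticulous track of which constants are allowed to depend on the (possibly large) initial data in \eqref{strong} and which must be controlled by $\delta$ alone, so that the latter can be absorbed at the end.
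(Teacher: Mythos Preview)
Your outline captures the standard continuation scheme and several of the key ingredients (relative entropy, the Jiang representation for $v$, the $\psi^3$ multiplier for the $L^4$ bound, the Li--Liang lower bound on $\theta$). However, the proposal as written would only recover the Huang--Wang result, not the sharper statement of Theorem~\ref{70}: it misses the mechanism by which $\delta_0$ is made to depend on \eqref{strong} \emph{but not} on $\|\psi_{0x}\|_{L^2}$ or $\|\zeta_{0x}\|_{L^2}$.

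The issue is at the higher-order step. If you multiply the $\psi$-equation by $-\psi_{xx}$ and the $\zeta$-equation by $-\zeta_{xx}$ as you propose, the boundary terms at $t=0$ bring in $\|\psi_{0x}\|_{L^2}^2$ and $\|\zeta_{0x}\|_{L^2}^2$, and these constants then propagate into the lower bound on $\theta$ and ultimately into $\delta_0$. The paper avoids this by inserting a time weight $\tilde\sigma(t)$ (equal to $t$ for $t\le1$ and $1$ afterward): one multiplies by $-\tilde\sigma\psi_{xx}$ and $-\tilde\sigma^{2}\zeta_{xx}$, so the initial contributions vanish and the weighted estimate (Lemma~\ref{67}) depends only on the quantities in \eqref{strong}. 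This weighted estimate is what allows one to show $\|\zeta_x(t)\|\to0$ with constants independent of $\|(\psi_{0x},\zeta_{0x})\|_{L^2}$, hence to find a $T_0$ (depending only on \eqref{strong}) after which $\theta$ is pointwise bounded, and then to run the $L^q$ maximum-principle argument on $[0,T_0]$ and close the bootstrap $G(t)\le 2C_0\Rightarrow G(t)\le C_0$ with $\delta_0$ depending only on \eqref{strong}. Without the time weight this chain breaks.

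Two smaller points. First, the bound on $\phi_x$ does not come from $\phi_t=\psi_x$ as you suggest; one instead rewrites the momentum equation as $\big(\mu(\bar v_x/\bar v)-\psi\big)_t=p_x+R_1$ with $\bar v=v/V$ and tests against $\bar v_x/\bar v$, which produces the good term $\int\theta\phi_x^2/v^3$ (see \eqref{40}--\eqref{101}). Second, the Li--Liang idea is used here not directly for pointwise bounds on $\theta$, but as the truncated multiplier $(\zeta-\Theta)_+$ (combined with $2\psi(\zeta-\Theta)_+$ on the momentum equation) to upgrade the dissipation from $\int\!\!\int\frac{\Theta}{v\theta}\psi_x^2+\frac{\Theta}{v\theta^2}\zeta_x^2$ to genuine control of $\int\!\!\int(\theta\psi_x^2+\zeta_x^2)$; only after this, together with the $\psi^3$ estimate, can the weighted higher-order step be run.
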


\begin{remark}
The condition \eqref{strong} shows that the initial perturbation  $\|\phi_{0}\|_{H^{1}(\mathbb{R})}$, $\|\psi_{0}\|_{L^{2}(\mathbb{R})}$,
$\|\psi_{0}\|_{L^{4}(\mathbb{R})}$, $\|\zeta_{0}\|_{L^{2}(\mathbb{R})}$ can be lager when the strength of the contact wave is smaller. Additionally, the initial perturbation  $\|\phi_{0x}\|_{L^{2}(\mathbb{R})}$ and $\|\zeta_{0x}\|_{L^{2}(\mathbb{R})}$ can be arbitrarily large.
\end{remark}

\begin{remark}
Our Theorem 1.1 improves Huang-Wang's result \cite{5bb} where they need the  $\delta_{0}$  depending on $\displaystyle\inf_{x\in\mathbb{R}}v_{0},~\displaystyle\inf_{x\in\mathbb{R}}\theta_{0}$ and $\|(\phi_{0},\psi_{0},\zeta_{0})\|_{H^{1}(\mathbb{R})}$ which is indeed stronger than \eqref{strong}.
\end{remark}

\vskip 0.3cm
When the relation \eqref{71} fails, the basic theory of hyperbolic systems of conservation laws implies that for any given constant state $(v_{-},u_{-},\theta_{-})$ with $v_{-}>0,~\theta_{-}>0$ and $u_{-}\in\mathbb{R}$, there exists a suitable neighborhood $O_{(v_{-},u_{-},\theta_{-})}$ of $(v_{-},u_{-},\theta_{-})$ such that for any $(v_{+},u_{+},\theta_{+})\in O_{(v_{-},u_{-},\theta_{-})}$, the Riemann problem of the Euler system \eqref{14} has a unique solution. In this paper, we only consider the stability of the superposition of the viscous contact wave and rarefaction wave. In this situation, we assume that 
\begin{gather}
    (v_{+},u_{+},\theta_{+})\in R_{1}CR_{3}(v_{-},u_{-},\theta_{-})\subset O_{(v_{-},u_{-},\theta_{-})},\notag
\end{gather}
where
\begin{eqnarray}
    R_{1}CR_{3}(v_{-},u_{-},\theta_{-})&\overset{\text{def}}{=}&\Big\{(v,u,\theta)\in O_{(v_{-},u_{-},\theta_{-})}\Big|u\geq u_{-}-\int_{v_{-}}^{ve^{\frac{(\gamma-1)(s_{-}-s)}{R\gamma}}}\lambda_{-}(\eta,s_{-})d\eta,\notag\\
        &&\qquad\qquad u\geq u_{-}-\int_{v_{-}e^{\frac{(\gamma-1)(s-s_{-})}{R\gamma}}}^{v}\lambda_{+}(\eta,s)d\eta,s\neq s_{-}        \Big\},\label{RCR}
  \end{eqnarray}
with
\begin{gather}
    s=\frac{R}{\gamma-1}\ln \frac{R\theta}{A}+R\ln v,~~s_{\pm}=\frac{R}{\gamma-1}\ln \frac{R\theta_{\pm}}{A}+R\ln v_{\pm}, \ \lambda_{\pm}(v,s)=\pm\sqrt{A\gamma v^{-\gamma-1}e^{\frac{(\gamma-1)s}R}}.\notag
\end{gather}
By the standard argument of Smoller \cite{smoller}, there exists some suitably small $\delta_{1}>0$, such that for 
\begin{gather}
    (v_{+},u_{+},\theta_{+})\in R_{1}CR_{3}(v_{-},u_{-},\theta_{-}),\quad |\theta_{+}-\theta_{-}|\leq\delta_{1},\label{1.18}
\end{gather}
there exists a positive constant $C=C(\theta_{-},\delta_{1})$ and a unique pair of points $(v_{-}^{m},u^{m},\theta_{-}^{m})$ and $(v_{+}^{m},u^{m},\theta_{+}^{m})$ in $O_{(v_{-},u_{-},\theta_{-})}$ satisfying
\begin{gather}
    \frac{R\theta_{-}^{m}}{v_{-}^{m}}=\frac{R\theta_{+}^{m}}{v_{+}^{m}}\overset{\text{def}}{=} p^{m},\notag
\end{gather}
and 
\begin{gather}
   |v_{\pm}^{m}-v_{\pm}|+|u^{m}-u_{\pm}|+|\theta_{\pm}^{m}-\theta_{\pm}|\leq C|\theta_{+}-\theta_{-}|.
\end{gather}
Moreover, the points $(v_{-}^{m},u^{m},\theta_{-}^{m})$ and $(v_{+}^{m},u^{m},\theta_{+}^{m})$ belong to the 1-rarefaction wave curve $R_{-}(v_{-},u_{-},\theta_{-})$ and the 3-rarefaction wave curve $R_{+}(v_{+},u_{+},\theta_{+})$, where
\begin{gather}
    R_{\pm}(v_{\pm},u_{\pm},\theta_{\pm})=\Big\{ (v,u,\theta)\Big|s=s_{\pm},u=u_{\pm}-\int_{v_{\pm}}^{v}\lambda_{\pm}(\eta,s_{\pm})d\eta,v>v_{\pm} \Big\}.\notag
\end{gather}
Without loss of generality, we assume $u^{m}=0$,  then, the 1-rarefaction wave $(v_{-}^{r},u_{-}^{r},\theta_{-}^{r})(x/t)$  connecting $(v_{-},u_{-},\theta_{-})$ and $(v_{-}^{m},0,\theta_{-}^{m})$ and the 3-rarefaction wave $(v_{+}^{r},u_{+}^{r},\theta_{+}^{r})(x/t)$ connecting $(v_{+}^{m},0,\theta_{+}^{m})$ and $(v_{+},u_{+},\theta_{+})$ are the weak solutions of the Riemann problem of the Euler system \eqref{14} with the following initial Riemann datas respectively:
\begin{equation}
(v_{\pm}^{r},u_{\pm}^{r},\theta_{\pm}^{r})(x,0)=(v_{\pm}^{m},0,\theta_{\pm}^{m}),\ \pm x<0;\qquad (v_{\pm}^{r},u_{\pm}^{r},\theta_{\pm}^{r})(x,0)=
(v_{\pm},u_{\pm},\theta_{\pm}),\ \pm x>0.\label{9658}
\end{equation}
Since the rarefaction wave $(v_{\pm}^{r},u_{\pm}^{r},\theta_{\pm}^{r})$  is not differentiable, it is necessary to construct a smooth approximation to the function in order to obtain the asymptotic behavior for the solutions of \eqref{01}-\eqref{23}. 
 Motivated by \cite{mats}, the smooth solutions of Euler system  \eqref{14}, $(V_{\pm}^{r},U_{\pm}^{r},\Theta_{\pm}^{r})$, which approximate $(v_{\pm}^{r},u_{\pm}^{r},\theta_{\pm}^{r})$, are given by
\begin{equation}
\left\{ \begin{array}{l}
\displaystyle\lambda_{\pm}(V_{\pm}^{r}(x,t),s_{\pm})=w_{\pm}(x,t),\\ 
\displaystyle U_{\pm}^{r}=u_{\pm}-\int_{v_{\pm}}^{V_{\pm}^{r}(x,t)}\lambda_{\pm}(\eta,s_{\pm})d\eta,\\
\displaystyle\Theta_{\pm}^{r}=\theta_{\pm}(v_{\pm})^{\gamma-1}(V_{\pm}^{r})^{1-\gamma},
       \end{array} \right.\label{1.20}\\
\end{equation}
where $w_{-}$ (respectively $w_{+}$) is the solution of the initial problem for the typical Burgers equation:
\begin{equation}
\left\{ \begin{array}{l}
\displaystyle w_{t}+ww_{x}=0,\quad (x,t)\in\mathbb{R}\times(0,\infty),\\ 
\displaystyle w(x,0)=(\frac{w_{r}+w_{l}}2+\frac{w_{r}-w_{l}}2)\tanh x,\label{tanh}
       \end{array} \right.\\
\end{equation}
with $w_{l}=\lambda_{-}(v_{-},s_{-})$, $w_{r}=\lambda_{-}(v_{-}^{m},s_{-})$, (respectively $w_{l}=\lambda_{+}(v_{+}^{m},s_{+})$, $w_{r}=\lambda_{+}(v_{+}^{m},s_{+})$).

Let $(V^{cd},U^{cd},\Theta^{cd})(x,t)$ be the viscous contact wave constructed by \eqref{18} which satisfies \eqref{931}, with $(v_{\pm},u_{\pm},\theta_{\pm})$ replaced by $(v_{\pm}^{m},0,\theta_{\pm}^{m})$.
Without causing ambiguity, for simplicity's sake, we will still use $(V,U,\Theta)$ to the superposition of the smooth approximate 1-rarefaction wave, viscous contact discontinuity wave and the smooth approximate 2-rarefaction wave, as follows
\begin{eqnarray}
\displaystyle \big(V,U,\Theta\big)(x,t)&=&\big(V_-^r,U_-^r,\Theta_-^r\big)(x,t)+\big(V^{cd},U^{cd},\Theta^{cd}\big)(\frac x{\sqrt{1+t}})+\big(V_+^r,U_+^r,\Theta_+^r\big)(x,t)\notag\\
\displaystyle&&\qquad-\big(v_-^m,0,\theta_-^m\big)-\big(v_+^m,0,\theta_+^m\big),\label{V25}
\end{eqnarray}
and 
\begin{gather}
    (\phi,\psi,\zeta)(x,t)=(v-V,u-U,\theta-\Theta)(x,t).\notag
\end{gather}
So far, another main theorem in this paper is described as follows:
\begin{theorem}\label{4925}
(Composite wave). Under the assumption of the initial condition \eqref{98}, for any given left end state $(v_{-},u_{-},\theta_{-})$, let $(V,U,\Theta)$ be defined in \eqref{V25} with strength $\delta=\mid \theta_{+}-\theta_{-} \mid$. There exists a small constant $\delta_{0}$ depending on 
\begin{gather}
  \inf_{x\in \mathbb{R}}v_{0},~\inf_{x\in\mathbb{R}}\theta_{0},~\|\phi_{0}\|_{H^{1}(\mathbb{R})}, ~\|\psi_{0}\|_{L^{2}(\mathbb{R})},~\|\psi_{0}\|_{L^{4}(\mathbb{R})},~\|\zeta_{0}\|_{L^{2}(\mathbb{R})},\label{composite}
\end{gather}
such that if $\delta<\delta_{0}$, the Cauchy problem \eqref{01}--\eqref{03}, \eqref{23}  admits  a unique global solution $(v,~u,~\theta)$ satisfying 
\begin{gather}
    (v-V,~u-U,~\theta-\Theta)\in C((0,+\infty);H^{1}(\mathbb{R})),\notag\\ 
    (v-V)_{x}\in L^{2}(0,+\infty;L^{2}(\mathbb{R})),\notag \\
    (u-U,~\theta-\Theta)_{x}\in L^{2}(0,+\infty;H^{1}(\mathbb{R})).\notag
\end{gather}
and large time behavior
\begin{gather}
   \lim_{t\to+\infty} \sup_{x\in\mathbb{R}}\big|(v-V,~u-U,~\theta-\Theta)(x,t)\big|=0.\label{large}
\end{gather}
where the $(v_{-}^{r},u_{-}^{r},\theta_{-}^{r})(x,t)$ and $(v_{+}^{r},u_{+}^{r},\theta_{+}^{r})(x,t)$ are the 1-rarefaction and 3-rarefaction waves uniquely determined by \eqref{14} and \eqref{9658}. 
\end{theorem}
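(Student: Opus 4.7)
The plan is to follow the standard continuation scheme: establish local existence of the perturbation system, and then close a uniform a priori estimate on any finite interval $[0,T]$, which together with local theory yields the global solution and the large-time behavior \eqref{large}. Subtracting the equations satisfied by $(V,U,\Theta)$ from those satisfied by $(v,u,\theta)$, the perturbation $(\phi,\psi,\zeta)$ solves a system with the same principal structure as in Theorem \ref{70}, but now with additional source terms on the right-hand side. These sources decompose into (i) the contact-wave residuals $R_1,R_2$ of \eqref{22}, which enjoy Gaussian decay in $x/\sqrt{1+t}$; (ii) the approximation errors produced by the smooth rarefaction pieces $(V^r_\pm,U^r_\pm,\Theta^r_\pm)$, which, since these profiles satisfy only the Euler system, contribute viscosity and heat-conduction correction terms; and (iii) cross interaction terms between the three wave components, which are concentrated in essentially disjoint space-time regions and decay exponentially in $t$.

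The preliminary step is to record the quantitative properties of the building blocks. For the smoothed rarefaction waves defined through \eqref{1.20}--\eqref{tanh}, I would use the Matsumura-Nishihara \cite{mats} type estimates: uniform $L^\infty$ bounds, the sign-definiteness $(U^r_\pm)_x\ge 0$, the $L^p$ decay $\|(V^r_\pm,U^r_\pm,\Theta^r_\pm)_x(t)\|_{L^p}\le C\delta^{1/p}(1+t)^{-1+1/p}$ for $1\le p\le\infty$, together with exponential decay of the approximation error away from the rarefaction fans. For the viscous contact component we already have \eqref{1.12}, and the products of $V^r_\pm - v_\pm^m$ with $V^{cd}-v_\pm^m$ (together with their derivatives) are bounded by $C\delta e^{-c(|x|+t)}$. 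Consequently every source term is integrable in $t$ in $L^2_x$, with total norm controlled by a small multiple of $\delta$.

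With these source estimates in hand, the energy estimates proceed in direct parallel with the proof of Theorem \ref{70}. The basic $L^2$ estimate uses the relative entropy-entropy flux pair: multiplying the $(\phi,\psi,\zeta)$ system by the standard thermodynamic multipliers and integrating gives, after manipulation, control of $\|(\phi,\psi,\zeta)(t)\|_{L^2}^2+\int_0^t\|(\psi_x,\zeta_x)\|_{L^2}^2\,d\tau$, modulo terms that are either absorbed via Cauchy-Schwarz and the smallness of $\delta$, or bounded by the integrable source norms; a favorable piece comes from the rarefaction dissipation $\int\int (U^r_\pm)_x(\psi^2+\zeta^2)/\Theta$, which helps control the transport-type terms produced by the rarefaction background. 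The $L^4$ bound on $\psi$, essential because $\psi_0\in L^4$ rather than $L^\infty$, is obtained by testing the momentum equation against $\psi^3/v$ and invoking the Sobolev interpolation of Theorem \ref{70}. First-derivative estimates on $\psi$ and $\zeta$ follow by differentiating the equations and repeating the energy argument, while uniform upper and lower bounds on $v$ and $\theta$ are secured via the Kanel-type representation for $v$ through the effective viscous flux together with the Li-Liang \cite{16} type argument for $\theta$; finally $\|\phi_x\|_{L^2}$ is recovered by closing the standard coupled energy identity.

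The main obstacle is to reconcile two independently difficult ingredients. On one side, the initial data on $(\psi,\zeta)$ lies only in $L^2$ (and $\psi$ only in $L^4$) and is allowed to be arbitrarily large, so the classical small-data energy scheme of Huang-Li-Matsumura \cite{HLM2010} does not apply; on the other side, unlike the pure contact-wave case of Theorem \ref{70}, the ansatz $(V,U,\Theta)$ now contains rarefaction contributions that are not exact Navier-Stokes solutions, and so generate additional sources and cross terms. The resolution mirrors the improvement of Huang-Wang \cite{5bb} already carried out in Theorem \ref{70}: one constructs a weighted nonlinear energy functional whose leading order is the $L^2$ norm of $(\phi,\psi,\zeta)$, but whose bad terms all carry a factor of $\delta$ or of an integrable-in-time source norm, so that the largeness of the initial data only affects the size of $\delta_0$ permitted by \eqref{composite}, while the smallness of $\delta$ itself closes the Gronwall-type loop uniformly in time.
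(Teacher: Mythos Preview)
Your proposal is correct and follows essentially the same route as the paper's Section~4: one rewrites the perturbation system with composite source terms $F,G$, proves the relative-entropy estimate (the paper's Lemma~4.1) in which the rarefaction contribution $Q_1\big((U^r_-)_x+(U^r_+)_x\big)\ge 0$ appears with a good sign, and then transplants verbatim the chain of lemmas from the contact-wave case (Jiang-type bounds on $v$, the $(\zeta-\Theta)_+$ and $\psi^3$ estimates, and the time-weighted higher-order bounds) to close the bootstrap $D(t)\le 2C_0\Rightarrow D(t)\le C_0$. One technical point you gloss over: the derivative estimates on $(\psi_x,\zeta_x)$ must carry the time weight $\tilde\sigma(t)=\min\{t,1\}$ rather than proceed by a plain energy argument, since otherwise $\delta_0$ would acquire dependence on $\|(\psi_{0x},\zeta_{0x})\|_{L^2}$, contrary to \eqref{composite}.
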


\begin{remark}
The condition \eqref{composite} means that if the strength of the composite wave is smaller, the initial perturbation $\|\phi_{0}\|_{H^{1}(\mathbb{R})}$, $\|\psi_{0}\|_{L^{2}(\mathbb{R})}$, $\|\psi_ {0}\|_{L^{4}(\mathbb{R})}$, $\|\zeta_{0}\|_{L^{2}(\mathbb{R})}$ can be larger.
\end{remark} 

\begin{remark}
Theorem 1.2 implies that the global solution $(v,u,\theta)$ of system \eqref{01} has following large time behavior
\[ \lim_{t\to +\infty}\sup_{x\in\mathbb{R}} \left(
\begin{array}{cccc}
|(v-v_{-}^{r}-V^{cd}-v_{+}^{r}+v_{-}^{m}+v_{+}^{m})(x,t)|\\
|(u-u_{-}^{r}-U^{cd}-u_{+}^{r})(x,t)|\\
|(\theta-\theta_{-}^{r}-\Theta^{cd}-\theta_{+}^{r}+\theta_{-}^{m}+\theta_{+}^{m})(x,t)|\\
\end{array} \right)=0. \]
\end{remark}

\vskip 0.3cm
\noindent\textbf{\normalsize Notations.} We denote by $C$ and $c$ the positive generic constants  without confusion throughout this paper, and the symbol $\int_{\mathbb{R}}\cdot dx$ is abbreviated as $\int\cdot dx$.
  $L^2(\mathbb{R})$ denotes the space of Lebesgue measurable functions on $\mathbb{R}$ which are square integrable, with the norm $\|f\|=(\int_{\mathbb{R}}|f|^2)^{\frac{1}{2}}$.
 $H^l(\mathbb{R})(l\geq0)$ denotes the Sobolev space of $L^2$-functions $f$ on $\mathbb{R}$ whose derivatives $\partial^j_x f,  j=1,\cdots$ are $L^2$
 functions too, with the norm
$ \|f\|_{H^l(\mathbb{R})}=(\sum_{j=0}^l\|\partial^j_x f\|^2)^{\frac{1}{2}}$.

\

We now make some comments on the analysis of this paper. First, we consider the stability of the wave consisting of the viscous contact one. Motivated by Huang-Wang\cite{5bb}, Jiang \cite{9}, Kazhikhov-Shelukhin \cite{15a} and Li-Liang \cite{16}, we first assume that the energy estimate (see  \eqref{25}) is bounded by $2C_{0}$ (see \eqref{C0}). Second, much as in Jiang \cite{9,10}, a local representation of the specific volum $v$ can be derived by using a special cut-off function when the far field states are different. Thanks to the choice of the cut-off function and delicate analysis based on the \eqref{25}, the specific volume $v$ is shown uniformly bounded from below and above with respect to the space and time(see Lemma 3.2).  Next, motivated by Li-Liang \cite{16} and Huang-Wang \cite{5bb}, we multiply the temperature  and momentum equation by $(\zeta-\Theta)_{+}$ (see \eqref{91}) and $2\psi(\zeta-\Theta)_{+}$ (\eqref{35}), and adding them together, we can control the two terms $\|\zeta_{x}\|_{L^{2}(\mathbb{R}\times(0,\infty))}$ and $\|\sqrt{\theta}\psi_{x}\|_{L^{2}(\mathbb{R}\times(0,\infty))}$. Then,
the key step  is to reduce the depending of $\delta_0$. To overcome this problem, the cut-off function $\tilde\sigma$ is introduced as \eqref{sigma}, and with the help of this cut-off function $\tilde{\sigma}$,   multiplying the momentum and the energy equation by $-\tilde{\sigma}\psi_{xx}$ and $-\tilde{\sigma}^{2}\zeta_{xx}$, after some laborious estimates, we can get that the  $\delta_{0}$ is only depended on  
\begin{gather}
  \inf_{x\in \mathbb{R}}v_{0},~\inf_{x\in\mathbb{R}}\theta_{0},~\|\phi_{0}\|_{H^{1}(\mathbb{R})}, ~\|\psi_{0}\|_{L^{2}(\mathbb{R})},~\|\psi_{0}\|_{L^{4}(\mathbb{R})},~\|\zeta_{0}\|_{L^{2}(\mathbb{R})}. \notag
\end{gather}
Finally, we can close the assuming \eqref{25} by above estimate. We remark that the underlying structures of viscous contact wave and rarefaction waves are essentially used throughout the whole proof.

The rest of this paper is organized as follows. Some facts and elementary properties of viscous contact wave and rarefaction waves are collected in Section 2. Section 3 and Section 4 is devoted to proving the Theorem 1.1 and Theorem 1.2.

\section{Preliminaries}
\indent\qquad
In this section, we recall some known results about the  properties of  viscous contact wave $(V,U,\Theta)$ defined by  \eqref{18}, which is useful in the following sections.
\begin{lemma}
Assume that $\delta=\mid\theta_{+}-\theta_{-}\mid \leq\delta_{0}$ for a small positive constant $\delta_{0}$. Then, the viscous contact wave $V,U,\Theta$ defined by \eqref{18} has the following properties:
\begin{equation}
    \begin{split}
        &\mid V-v_{\pm}\mid +\mid \Theta-\theta_{\pm}\mid\leq O(1)\delta e^{-\frac{c_{1}x^{2}}{1+t}},\\
        &\mid\partial_{x}^{k}V\mid+\mid\partial_{x}^{k-1}U\mid +\mid \partial_{x}^{k}\Theta\mid
        \leq O(1)\delta(1+t)^{-\frac k2}e^{-\frac{c_{1}x^{2}}{1+t}},\quad k\geq 1.\label{41}
    \end{split}
\end{equation}
\end{lemma}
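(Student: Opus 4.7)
The strategy is to reduce every bound to the self-similar profile $\Theta(\xi)$ with $\xi = x/\sqrt{1+t}$ and then propagate information from $\Theta$ to $V$ and $U$ via the algebraic relations in \eqref{18}. The identity $V = R\Theta/p_{+}$ together with $v_{\pm} = R\theta_{\pm}/p_{+}$ immediately gives $V - v_{\pm} = (R/p_{+})(\Theta - \theta_{\pm})$ and $\partial_x^k V = (R/p_{+})\partial_x^k \Theta$, so the first inequality of \eqref{41} and the $V$-part of the second inequality are straight consequences of the corresponding bounds on $\Theta$. Likewise, $U = \frac{\kappa(\gamma-1)}{\gamma R}\Theta_x/\Theta$, and since $\Theta$ is bounded uniformly away from $0$ for small $\delta$ (it takes values between $\theta_-$ and $\theta_+$), the $U$-bounds reduce, by the Leibniz rule, to bounds on $\Theta_x,\Theta_{xx},\dots$ that we already need for the $\Theta$-part.

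The core of the argument is therefore the Gaussian bound
\[
\sum_{j=0}^{k}(1+t)^{j/2}|\partial_x^{j}\Theta|+|\Theta-\theta_{\pm}|\leq C\delta\, e^{-c_1 x^2/(1+t)}.
\]
I would obtain this by analyzing the ODE that $\Theta(\xi)$ satisfies when one plugs the ansatz $\Theta(x,t)=\Theta(\xi)$ into \eqref{931}, namely (after using $V=R\Theta/p_{+}$) an equation of the form $-\tfrac{\xi}{2}\Theta'(\xi)=a\,(\Theta'(\xi)/\Theta(\xi))'$ with a positive constant $a$. The case $k=0,1,2$ is exactly \eqref{1.12}, which is quoted from Hsiao--Liu \cite{15}; the Gaussian tail there comes from the fact that for large $|\xi|$ the profile behaves like a rescaled error function, and for small $\delta$ a standard perturbation of the linear heat equation gives the quantitative decay with any $c_1<1/(4a)$, say, depending only on $\theta_{\pm}$. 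For the higher-order derivatives I would proceed by induction on $k$: differentiating the ODE for $\Theta(\xi)$ gives a linear ODE for $\Theta^{(k)}(\xi)$ whose coefficients are bounded in terms of lower-order derivatives, and combining this with a standard barrier/maximum-principle argument (or, equivalently, with energy estimates weighted by $e^{c_1 x^2/(1+t)}$ applied to $\partial_x^k\Theta$) yields the Gaussian decay $|\Theta^{(k)}(\xi)|\leq C\delta e^{-c_1\xi^2}$, which translates via $\partial_x^k = (1+t)^{-k/2}\partial_\xi^k$ into the desired time-weighted bound.

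The only place that requires mild care is the combination of factors in $U$ and its derivatives: $\partial_x^{k-1}U = \frac{\kappa(\gamma-1)}{\gamma R}\,\partial_x^{k-1}(\Theta_x/\Theta)$, and expanding this via the Leibniz rule produces sums of products of $\partial_x^{j}\Theta$ with $1\leq j\leq k$, each of which carries a factor $(1+t)^{-j/2}e^{-c_1 x^2/(1+t)}$ and at most one factor of $\delta$. Because $e^{-c_1 x^2/(1+t)}$ is bounded by one and $\Theta$ stays near a positive constant, the product of any number of such Gaussians is again controlled by $C\delta (1+t)^{-k/2}e^{-c_1 x^2/(1+t)}$, with possibly a smaller $c_1$ absorbed into the constant. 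No single step is really an obstacle; the main bookkeeping issue is keeping track of the exponents of $(1+t)$ and the Gaussian weight through the Leibniz expansions, which I would handle once and for all by stating the induction hypothesis in the unified form displayed above and then reading off the bounds on $V$ and $U$ from the resulting inequalities.
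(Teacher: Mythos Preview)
Your proposal is a correct and reasonable sketch of how one actually proves these estimates. Note, however, that the paper does \emph{not} give its own proof of this lemma: immediately after the statement it writes ``The proof of Lemma 2.1 is given by Huang--Matsumura--Shi \cite{HMS2004} and will not be repeated here.'' So there is nothing to compare your argument against; you are, in effect, reconstructing the content of the cited reference. Your reduction of the $V$- and $U$-bounds to the $\Theta$-bounds via the algebraic relations in \eqref{18}, the use of the self-similar ODE for $\Theta(\xi)$ to obtain the $k=0,1,2$ case (which is precisely the quoted estimate \eqref{1.12}), and the induction on $k$ by differentiating the profile ODE are exactly the standard ingredients one finds in \cite{HMS2004} and related papers. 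The Leibniz bookkeeping you describe for $\partial_x^{k-1}U$ is also fine: in each term of the expansion of $\partial_x^{k-1}(\Theta_x/\Theta)$ the derivative orders in the numerator sum to $k$, so the time weights combine to $(1+t)^{-k/2}$, and at least one factor $\partial_x^{j}\Theta$ with $j\ge 1$ is present, supplying the $\delta$.
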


The proof of Lemma 2.1 is given by  Huang-Matsumura-Shi \cite{HMS2004} and  will not be repeated here. By using \eqref{41}, the following estimate are obtained directly:
\begin{equation}\label{43}
    \begin{split}
        &R_{1}=O(1)\delta(1+t)^{-\frac32}e^{-\frac{c_{1}x^{2}}{1+t}},\quad R_{2}=O(1)\delta(1+t)^{-2}e^{-\frac{c_{1}x^{2}}{1+t}}.
    \end{split}
\end{equation}
The following two Lemmas are  important for  basic energy estimate, the proof can be found in Huang-Li-Matsumura \cite{HLM2010} and the proof here is omitted for the sake of simplicity.

\begin{lemma}\label{20}
For $0<T\leq +\infty$, suppose that $h(x,t)$ satisfies
\begin{gather}
    h\in L^{\infty}(0,T;L^{2}(\mathbb{R})),\quad h_{x}\in L^{2}(0,T;L^{2}(\mathbb{R})),\quad h_{t}\in 
    L^{2}(0,T;H^{-1}(\mathbb{R})).
\end{gather}
Then, for $\alpha>0$, it holds that
\begin{equation}
    \begin{split}
        &\int_{0}^{T}\int h^{2}w^{2}dxdt\leq 4\pi\| h(0) \|^{2}+4\pi\alpha^{-1}\int_{0}^{T}\| h_{x} \|^{2}dt+
        8\alpha\int_{0}^{T}<h_{t},hg^{2}>_{H^{-1}\times H^{2}}dt,\notag
    \end{split}
\end{equation}
where  
\begin{gather}
w(x,t)=(1+t)^{-1/2}\exp{\big( -\frac{\alpha x^{2}}{1+t} \big)},\quad g(x,t)=\int_{-\infty}^{x}w(y,t)dy.\notag    
\end{gather}
\end{lemma}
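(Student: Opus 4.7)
The plan is to test the equation for $h_t$ against the weight $hg^{2}$, exploiting the fact that the Gaussian $w$ is essentially a fundamental solution of a rescaled heat equation; the cross term produced by the time derivative of $g$ will supply exactly the weighted term $\int h^{2}w^{2}\,dx$ we want on the left, while the unwanted pieces can be absorbed or bounded by an $L^{\infty}$ estimate on $g$.

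First I would record the key properties of the weights. A direct computation shows
\begin{gather*}
4\alpha\, w_{t}=w_{xx},\qquad g_{x}=w,\qquad g_{t}=\frac{1}{4\alpha}\, g_{xx}=\frac{1}{4\alpha}\, w_{x},
\end{gather*}
and since $g(\cdot,t)$ is nondecreasing in $x$ with limit $\int_{\mathbb{R}} w\,dy=\sqrt{\pi/\alpha}$, we have the $L^{\infty}$ bound $0\le g(x,t)\le \sqrt{\pi/\alpha}$ for all $(x,t)$. These ingredients together with the hypotheses on $h$ guarantee that $hg^{2}\in L^{2}(0,T;H^{1}(\mathbb{R}))$, so the duality pairing $\langle h_{t},hg^{2}\rangle_{H^{-1}\times H^{1}}$ is well defined.

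Next I would write down the energy identity
\begin{gather*}
\frac{d}{dt}\int h^{2}g^{2}\,dx=2\langle h_{t},hg^{2}\rangle+2\int h^{2}g\, g_{t}\,dx,
\end{gather*}
and use $g_{t}=\tfrac{1}{4\alpha}g_{xx}$ followed by integration by parts:
\begin{gather*}
2\int h^{2}g\, g_{t}\,dx=\frac{1}{2\alpha}\int h^{2}g\, g_{xx}\,dx=-\frac{1}{\alpha}\int hh_{x}g\,w\,dx-\frac{1}{2\alpha}\int h^{2}w^{2}\,dx.
\end{gather*}
Rearranging gives
\begin{gather*}
\frac{1}{2\alpha}\int h^{2}w^{2}\,dx+\frac{d}{dt}\int h^{2}g^{2}\,dx=2\langle h_{t},hg^{2}\rangle-\frac{1}{\alpha}\int hh_{x}g\,w\,dx.
\end{gather*}
The cross term is handled by Young's inequality, $\tfrac{1}{\alpha}|hh_{x}gw|\le \tfrac{1}{4\alpha}h^{2}w^{2}+\tfrac{1}{\alpha}h_{x}^{2}g^{2}$, so that a quarter of the weighted term can be absorbed on the left and the remaining $\tfrac{1}{\alpha}\int h_{x}^{2}g^{2}\,dx$ is controlled by $\tfrac{\pi}{\alpha^{2}}\|h_{x}\|^{2}$ thanks to $g^{2}\le \pi/\alpha$.

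Finally I would integrate in time from $0$ to $T$, discard the nonnegative boundary term $\|h(T)g(T)\|^{2}$, and estimate $\|h(0)g(0)\|^{2}\le (\pi/\alpha)\|h(0)\|^{2}$; multiplying through by $4\alpha$ produces the three constants $4\pi$, $4\pi\alpha^{-1}$ and $8\alpha$ in the stated inequality. The only real subtlety is checking that the test function $hg^{2}$ lies in $H^{1}$ for a.e.\ $t$ so that the duality pairing and the manipulations above are legitimate; this is where the regularity hypotheses $h\in L^{\infty}(0,T;L^{2})$ and $h_{x}\in L^{2}(0,T;L^{2})$ together with the smoothness and boundedness of $g$ are used in an essential way.
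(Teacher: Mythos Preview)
Your argument is correct and complete: the heat-equation identity $4\alpha w_{t}=w_{xx}$, the relation $g_{t}=\tfrac{1}{4\alpha}g_{xx}$, the bound $0\le g\le\sqrt{\pi/\alpha}$, the integration by parts, and the Young-inequality absorption all check out and yield precisely the constants $4\pi$, $4\pi\alpha^{-1}$, $8\alpha$. The paper itself does not supply a proof of this lemma; it simply cites Huang--Li--Matsumura \cite{HLM2010} and omits the argument, so there is nothing in the paper to compare against beyond noting that your derivation is exactly the standard one from that reference. (One cosmetic point: the paper writes the pairing as $H^{-1}\times H^{2}$, which is a typo for $H^{-1}\times H^{1}$; your version is the correct duality.)
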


\begin{lemma}\label{21}
For $\alpha\in(0,\frac{c_{1}}4]$ and $w$ defined in Lemma \ref{20}, there exists some positive constant $C$ depending on $\alpha$, such that the following estimate holds:
\begin{gather}
    \int_{0}^{t}\int\big( \phi^{2}+\psi^{2}+\zeta^{2} \big)w^{2}dxds\leq C\Big( 1+\int_{0}^{t}\int\big( \phi_{x}^{2}+\psi_{x}^{2}+\zeta_{x}^{2} \big)dxds \Big).
\end{gather}
\end{lemma}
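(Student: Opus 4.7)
The plan is to apply Lemma \ref{20} successively with $h=\phi$, $h=\psi$, and $h=\zeta$, and then sum the three resulting weighted estimates. Each application produces three pieces on the right-hand side. The initial piece $4\pi\|h(0)\|^2$ is absorbed into the constant $C$ using the $H^1$ bound on the perturbation initial data coming from \eqref{98}. The dissipative piece $4\pi\alpha^{-1}\int_0^t\|h_x\|^2\,ds$ fits directly into the right-hand side of the claim, with a constant depending on $\alpha$. The remaining task, and the heart of the proof, is to control the duality pairing $8\alpha\int_0^t\langle h_t,hg^2\rangle\,ds$. A useful auxiliary observation is that $g(x,t)=\int_{-\infty}^{x}w\,dy$ is uniformly bounded by $\sqrt{\pi/\alpha}$ and that $g_x=w$, so $hg^2\in L^2(0,T;H^1)$ whenever $h\in L^2(0,T;H^1)$.

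To estimate the duality terms I would use the perturbation system obtained by subtracting \eqref{22} from \eqref{01}--\eqref{03}: one gets $\phi_t=\psi_x$, a momentum equation for $\psi_t$ whose principal pieces are $-(p-P)_x+\mu(u_x/v-U_x/V)_x-R_1$, and an energy equation for $\zeta_t$ with the heat-diffusion term $\kappa(\theta_x/v-\Theta_x/V)_x$ together with convective and viscous production and the error $R_2$. Substituting into $\langle h_t,hg^2\rangle$ and integrating by parts in $x$ produces two families of terms. The first family consists of products of $\phi_x,\psi_x,\zeta_x$ against $hg^2$ or $hwg$ (the latter coming from $g_x=w$); by Cauchy--Schwarz and Young's inequality these are controlled by a small multiple of $\int_0^t\int(\phi^2+\psi^2+\zeta^2)w^2\,dx\,ds$ plus $C\int_0^t\int(\phi_x^2+\psi_x^2+\zeta_x^2)\,dx\,ds$. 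The second family gathers the contact-wave source terms involving $R_1,R_2,V_x,U_x,\Theta_x$; by \eqref{41} and \eqref{43} these decay as $e^{-c_1x^2/(1+t)}$, and the restriction $\alpha\le c_1/4$ is exactly what makes their product with $w=(1+t)^{-1/2}e^{-\alpha x^2/(1+t)}$ integrable in $x,t$, so that this family contributes only an $O(\delta)$ additive constant that is absorbed into the ``$1$'' of the claim.

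The main obstacle is that the weighted quadratic $\int_0^t\int w^2(\phi^2+\psi^2+\zeta^2)\,dx\,ds$, which is exactly the quantity we want to bound, reappears on the right-hand side of the duality estimate. The key is to tune the $8\alpha$ prefactor from Lemma \ref{20} against the small Young constants introduced in the first family above, so that the reappearing $w^2$-weighted term can be absorbed into the left-hand side of the summed inequality before one closes the argument. This absorption is what forces the dependence of $C$ on $\alpha$ (through $\alpha^{-1}$) while keeping it independent of $T$. Implicit throughout is an a priori pointwise bound on $v,\theta$ and their reciprocals that enters the dissipation terms $u_x/v-U_x/V$ and $\theta_x/v-\Theta_x/V$; these bounds are supplied by the parallel a priori analysis in Section 3 in which Lemma \ref{21} is invoked.
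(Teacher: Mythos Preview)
The paper omits the proof entirely, deferring to Huang--Li--Matsumura \cite{HLM2010}. Your outline has the correct scaffolding---apply Lemma~\ref{20} to each component and use the perturbation system to rewrite the duality terms---but it misidentifies the mechanism that closes the estimate.

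The problematic step is your treatment of what you call ``family~1''. After integrating by parts, each duality term $\int_0^t\langle h_t,hg^2\rangle\,ds$ produces two kinds of contributions: those carrying the weight $gw$ (from $(g^2)_x=2gw$) and those carrying $g^2$. The $gw$ pieces are indeed absorbable exactly as you say, because the factor $w$ lets you split $|h_x\cdot h' gw|\le \varepsilon (h')^2w^2+C_\varepsilon h_x^2$. But the $g^2$ pieces---for instance $\int_0^t\!\int \psi\,\phi_x\,g^2\,dx\,ds$ coming from $\phi_t=\psi_x$---do \emph{not} carry a $w$; since $g$ is merely bounded, Young's inequality at best yields $\varepsilon\int_0^t\|\psi\|^2\,ds+C_\varepsilon\int_0^t\|\phi_x\|^2\,ds$, and the first term grows like $t$. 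No choice of the $8\alpha$ prefactor can absorb a quantity that is not $w^2$-weighted into the left-hand side.

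What actually happens in \cite{HLM2010} is a cancellation you have not invoked: the three duality sums are not independent. With the symmetrizer weights $\bigl(R\Theta/V^{2},\,1,\,c_\nu/\Theta\bigr)$ on $(\phi,\psi,\zeta)$, the leading (inviscid, linearized) part of $\sum a_ih_i(h_i)_t$ is a pure $x$-derivative---this is just the entropy/energy structure of the linearized Euler system. Testing that flux against $g^2$ and integrating by parts converts every $g^2$ weight into $2gw$, so that after the cancellation \emph{only} $gw$-weighted terms remain, and these are the ones your absorption argument correctly handles. The viscous and source contributions then fall into your ``family~2'' and are controlled by~\eqref{41}--\eqref{43} together with the restriction $\alpha\le c_1/4$. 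In short, the missing idea is the symmetrizer-induced flux cancellation; without it the $g^2$ terms do not close, and tuning $\alpha$ alone cannot substitute for it.
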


\vskip 0.3cm
Next,  the following properties of the smooth approximate rarefaction waves for the system \eqref{tanh} are given below, see  Matsumura-Nishihara \cite{mats}.
\begin{lemma}
For given $w_{l}\in\mathbb{R}$ and $\bar{w}>0$, let $w_{r}\in\{ 0<\hat{w}\overset{\mathrm{def}}{=} w-w_{l}<\bar{w} \}$. Then, the problem \eqref{tanh} has a unique smooth solution in time satisfying the following properties:

(i)~$w_{l}<w(x,t)<w_{r},w_{x}>0,  \forall x\in\mathbb{R},t>0.$

(ii)~For $p\in[1,\infty]$, there exists some positive constant $C=C(p,w_{l},\bar{w})$ such that, for $\hat{w}\geq 0$ and $t\geq 0$,
\begin{gather}
    \|w_{x}(t)\|_{L^{p}(\mathbb{R})}\leq C\min\big\{\hat{w},\hat{w}^{\frac1p}t-1+\frac1p\big\},\quad \|w_{xx}\|_{L^{p}(\mathbb{R})}\leq C\min\big\{\hat{w},t^{-1}\big\}.\notag
\end{gather}

(iii)~If $w_{l}>0$, for any $(x,t)\in(-\infty,0]\times[0,+\infty)$,
\begin{gather}
    |w(x,t)-w_{l}|\leq \hat{w}e^{-2(|x|+w_{l}t)},\quad |w_{x}(x,t)|\leq 2\hat{w}e^{-2(|x|+w_{l}t)}.\notag
\end{gather}

(iv)~If $w_{l}<0$, for any $(x,t)\in[0,\infty)\times[0,\infty)$,
\begin{gather}
    |w(x,t)-w_{r}|\leq \hat{w}e^{-2(|x|+w_{r}t)},\quad |w_{x}(x,t)|\leq 2\hat{w}e^{-2(|x|+w_{r}t)}.\notag
\end{gather}

(v)~For the Riemann solution $w^{r}(x/t)$ of the scalar equation \eqref{tanh} with the Riemann initial data
\begin{equation}
w(x,0)=\left\{ \begin{array}{l}
w_{l},\quad x<0,\\ 
w_{r},\quad x>0,
       \end{array}\right.\notag
\end{equation}
we have 
\begin{gather}
    \lim_{t\to+\infty}\sup_{x\in\mathbb{R}}|w(x,t)-w^{r}(\frac xt)|=0.\notag
\end{gather}
\end{lemma}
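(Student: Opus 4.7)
The plan is to prove Lemma 2.3 by the method of characteristics applied to the Burgers equation $w_t+ww_x=0$ with the monotone smooth data $w_0(x)=\frac{w_r+w_l}{2}+\frac{w_r-w_l}{2}\tanh x$. Since $w_0$ is strictly increasing with limits $w_l,w_r$ at $x\to\mp\infty$, the characteristics $x=\xi+w_0(\xi)t$ do not intersect for any $t\ge0$, the implicit equation defines $\xi=\xi(x,t)$ smoothly, and the unique global smooth solution is $w(x,t)=w_0(\xi(x,t))$. Property (i) is then immediate: $w_l<w_0(\xi)<w_r$, and implicit differentiation gives $w_x=w_0'(\xi)/(1+w_0'(\xi)t)>0$.

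For (ii), I would change variables via $dx=(1+w_0'(\xi)t)\,d\xi$ to rewrite $\|w_x(t)\|_{L^p}^p=\int (w_0'(\xi))^p(1+w_0'(\xi)t)^{1-p}d\xi$. Using $w_0'(\xi)=\frac{\hat w}{2}\,\mathrm{sech}^2\xi$ and splitting the integration region according to whether $w_0'(\xi)t\lesssim 1$ or $\gg 1$, the bound $\|w_x\|_{L^p}\le C\hat w$ drops out from the monotone estimate $w_x\le w_0'$, while the decay rate $\hat w^{1/p}t^{-1+1/p}$ follows from the fast exponential decay of $\mathrm{sech}^2\xi$ which controls the large-$|\xi|$ tail after rescaling $\eta=\hat w t\,\mathrm{sech}^2\xi$. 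The corresponding $\|w_{xx}\|_{L^p}$ estimates follow by differentiating the characteristic representation once more and applying the same dyadic splitting.

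Properties (iii) and (iv) encode the fact that for a purely right-moving characteristic family with speed in $[w_l,w_r]$, points on the "wrong side" of the fan see only the asymptotic state. Assuming $w_l>0$ and $x\le 0$: the characteristic through $(x,t)$ satisfies $\xi=x-w_0(\xi)t\le x-w_l t$, hence $|\xi|\ge|x|+w_l t$; since $w_0(\xi)-w_l=O(\hat w e^{-2|\xi|})$ and $w_0'(\xi)=O(\hat w e^{-2|\xi|})$ as $\xi\to-\infty$ coming from $\tanh$, the desired exponential bounds follow. Case (iv) is symmetric.

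For (v), the Riemann solution $w^r(x/t)$ is constant on the two outer sectors and equals $x/t$ inside the fan $w_l t\le x\le w_r t$. Inside the fan, I would rewrite the implicit relation as $x/t=w_0(\xi)+\xi/t$ and observe that, since $w_0$ is bounded, $\xi/t\to 0$ uniformly, so $w(x,t)=w_0(\xi)\to x/t$ uniformly. Outside the fan one invokes (iii)–(iv) directly. The main technical obstacle is the $L^p$ scaling estimate in (ii); all the other parts are soft consequences of monotone characteristics and the explicit exponential profile of $\tanh$, but obtaining the sharp time decay $t^{-1+1/p}$ requires the careful region-splitting described above rather than a direct pointwise bound.
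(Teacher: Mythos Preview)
The paper does not give its own proof of this lemma; it simply states the properties and refers the reader to Matsumura--Nishihara \cite{mats}. Your proposal is therefore not competing with an argument in the paper but rather supplying one where the authors chose to cite.

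Your outline via the method of characteristics is the standard route and is essentially what one finds in the cited literature. The representation $w(x,t)=w_0(\xi)$ with $x=\xi+w_0(\xi)t$, the positivity of $w_x=w_0'(\xi)/(1+w_0'(\xi)t)$, the change of variables $dx=(1+w_0'(\xi)t)\,d\xi$ for the $L^p$ estimates, and the exponential tail bounds in (iii)--(iv) coming from $\xi\le x-w_l t$ are all correct and are exactly the ingredients used in Matsumura--Nishihara-type proofs. One small point to tighten in (v): your sentence ``since $w_0$ is bounded, $\xi/t\to 0$ uniformly'' is not quite right as stated, because $\xi$ is unbounded near the edges of the fan. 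The clean way is to fix $\varepsilon>0$, choose $M$ so that $|w_0(\xi)-w_l|<\varepsilon$ for $\xi<-M$ and $|w_0(\xi)-w_r|<\varepsilon$ for $\xi>M$, and then split into $|\xi|\le M$ (where $|w-x/t|=|\xi|/t\le M/t$) and $|\xi|>M$ (where both $w$ and $w^r(x/t)$ are within $\varepsilon$ of the same endpoint value). With that adjustment the argument is complete.
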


Finally, we divide $\mathbb{R}\times(0,t)$ into three parts, that is, $\mathbb{R}\times(0,t)=\Omega_{-}\cup\Omega_{c}\cup\Omega_{+}$, with
\begin{eqnarray}
 &&   \Omega_{\pm}=\Big\{(x,t)\big|\pm2x>\pm\lambda_{\pm}(v_{\pm}^{m},s_{\pm})t\Big\},\\
 &&   \Omega_{c}=\Big\{(x,t)\big|\lambda_{-}(v_{-}^{m},s_{-})t\leq 2x\leq\lambda_{+}(v_{+}^{m},s_{+})t \Big\}.
\end{eqnarray}
Then, from Lemma 2.4 and \eqref{1.12},  the following results hold.

\begin{lemma}
For any given $(v_{-},u_{-},\theta_{-})$, assume that $(v_{+},u_{+},\theta_{+})$ satisfies \eqref{1.18} with 
$\delta=|\theta_{+}-\theta_{-}| \leq \bar{\delta}$. Then the smooth rarefaction waves $(V_{\pm}^{r},U_{\pm}^{r},\Theta_{\pm}^{r})$ constructed in \eqref{1.20} and the viscous contact discontinuity wave $V^{cd},U^{cd},\Theta^{cd}$ satisfy the following:

(i)~$U_{\pm}^{r}\geq 0,\quad (x\in \mathbb{R},t>0).$

(ii)~For $1\leq p\leq\infty$, there exists a positive constant $C=C(p,v_{-},u_{-},\theta_{-},\bar{\delta},\delta_{1})$ such that for $\delta=|\theta_{+}-\theta_{-}|$ and $t\geq 0$, it holds that
\begin{equation}
\left. \begin{array}{l}
 \displaystyle   \ \big\|\big( (V_{\pm}^{r})_{x},(U_{\pm}^{r})_{x},(\Theta_{\pm}^{r})_{x} \big)(t)\big\|_{
    L^{p}(\mathbb{R})}\leq C\min\big\{ \delta,\delta^{\frac1p}t^{-1+\frac1p} \big\},\notag\\
\displaystyle    \big\|\big( (V_{\pm}^{r})_{xx},(U_{\pm}^{r})_{xx},(\Theta_{\pm}^{r})_{xx} \big)(t)\big\|_{
    L^{p}(\mathbb{R})}\leq C\min\big\{ \delta,\delta^{-1} \big\}.\notag
\end{array} \right.
\end{equation}

(iii)~There exists some positive constant $C=C(v_{-},u_{-},\theta_{-},\bar{\delta},\delta_{1})$ such that for 
\begin{gather}
    c_{0}=\frac{1}{10}\min\big\{ |\lambda_{-}(v_{-}^{m},s_{-})|,\lambda_{+}(v_{+}^{m},s_{+}),c_{2}\lambda_{-}^{2}(v_{-}^{m},s_{-}), c_{2}\lambda_{+}^{2}(v_{+}^{m},s_{+}),1 \big\},\notag
\end{gather}
it holds that, 
\begin{gather}
    (U_{\pm}^{r})_{x}+\left| (V_{\pm}^{r})_{x} \right|+\left| V_{\pm}^{r}-v_{\pm}^{m} \right|+\left| (\Theta_{\pm}^{r})_{x} \right|+\left| \Theta_{\pm}^{r}-\theta_{\pm}^{m} \right|\leq C\delta e^{-c_{0}(|x|+t)},\ \ \mathrm{in}\  \Omega_{c},\notag
\end{gather}
and
\begin{equation}
\left\{ \begin{array}{l}
\displaystyle |V^{cd}-v_{\mp}|+|V_{x}^{cd}|+|\Theta^{cd}-\theta_{\mp}^{m}|+|\Theta_{x}^{cd}|+|U_{x}^{cd}|\leq C\delta e^{-c_{0}(|x|+t)},\\
\displaystyle (U_{\pm}^{r})_{x}+|(V_{\pm}^{r})_{x}|+|V_{\pm}^{r}-v_{\pm}^{m}|+|(\Theta_{\pm}^{r})_{x}|+|\Theta_{\pm}^{r}-\theta_{\pm}^{m}|\leq C\delta e^{-c_{0}(|x|+t)}, 
 \end{array} \right.\ \  \mathrm{in}\  \Omega_{\mp}.
\end{equation}

(iv)~For the rarefaction waves $(v_{\pm}^{r},u_{\pm}^{r},\theta_{\pm}^{r})(x/t)$ determined by \eqref{14} and \eqref{9658}, it holds
\begin{gather}
  \lim_{t\to+\infty}\sup_{x\in\mathbb{R}}|(V_{\pm}^{r},U_{\pm}^{r},\Theta_{\pm}^{r})(x,t)-(v_{\pm}^{r},u_{\pm}^{r},\theta_{\pm}^{r})(\frac xt)|=0.\notag  
\end{gather}
\end{lemma}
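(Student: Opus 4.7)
The plan is to derive each of the four items from the Burgers-wave results of Lemma 2.4 together with the defining identities \eqref{1.20} and the contact-wave bounds \eqref{41}. Parts (i) and (ii) are essentially mechanical. Differentiating $\lambda_\pm(V_\pm^r,s_\pm)=w_\pm$ and using the second line of \eqref{1.20} gives $(U_\pm^r)_x=-w_\pm(V_\pm^r)_x$ and $(V_\pm^r)_x=(w_\pm)_x/\partial_v\lambda_\pm$; combined with Lemma 2.4(i) and the signs of $\lambda_\pm$ and $\partial_v\lambda_\pm$ these yield (i), and together with Lemma 2.4(ii) they transfer the $L^p$ estimates on $(w_\pm)_x,(w_\pm)_{xx}$ to the corresponding estimates on derivatives of $V_\pm^r,U_\pm^r,\Theta_\pm^r$ via the chain rule, since the third line of \eqref{1.20} expresses $\Theta_\pm^r$ smoothly in $V_\pm^r$.

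The substance of the lemma is part (iii). On the central region $\Omega_c$ both smooth rarefaction waves are spatially separated from their respective fans: the $-$ fan is confined to $x\le\lambda_-(v_-^m,s_-)t$ while $\Omega_c$ requires $x\ge\tfrac12\lambda_-(v_-^m,s_-)t$, placing $\Omega_c$ a distance proportional to $|x|+t$ from the fan. The exponential far-field estimates of Lemma 2.4(iii)--(iv) for the Burgers solution (applied with the appropriate orientation when $w_l,w_r$ share the same sign) then give $|w_--w_r|+|(w_-)_x|\le C\delta e^{-c_0(|x|+t)}$ on $\Omega_c$; pulling back through \eqref{1.20} produces the stated decay for $(V_-^r,U_-^r,\Theta_-^r)$, and the $w_+$ case is symmetric. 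On $\Omega_\mp$ the roles swap: the ``$\pm$'' rarefaction lies on the opposite side of the origin and the same far-field estimate applies, while the contact wave is handled through \eqref{41}. The key geometric observation is that on $\Omega_\mp$ we have $|x|\ge\tfrac12|\lambda_\mp(v_\mp^m,s_\mp)|\,t$, so $|x|$ and $t$ are comparable and $x^2/(1+t)\ge c(|x|+t)$ for some $c>0$; this converts the Gaussian factor $e^{-c_1x^2/(1+t)}$ in \eqref{41} into the linear factor $e^{-c_0(|x|+t)}$ required by the statement.

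The hard part will be precisely this Gaussian-to-linear conversion together with the bookkeeping needed to match each wave to the correct region and to extract a single constant $c_0$ valid uniformly in all subregions and for both signs; the explicit $c_0$ quoted in the lemma is essentially proportional to the minimum of the characteristic speeds, reflecting exactly how long it takes for the comparable-size bound $|x|\sim t$ to kick in. Finally, part (iv) follows from Lemma 2.4(v): uniform convergence of $w_\pm(x,t)$ to the Riemann solution $w_\pm^r(x/t)$ of the Burgers equation transfers, via the smooth inversion of $\lambda_\pm(\cdot,s_\pm)$ and the integral and power formulas in \eqref{1.20}, to the uniform convergence of $(V_\pm^r,U_\pm^r,\Theta_\pm^r)$ to the inviscid rarefaction $(v_\pm^r,u_\pm^r,\theta_\pm^r)(x/t)$.
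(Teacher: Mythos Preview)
Your proposal is correct and follows exactly the approach the paper indicates: the paper does not give a proof of this lemma at all, merely stating that ``from Lemma 2.4 and \eqref{1.12}, the following results hold.'' Your outline supplies precisely the missing mechanics---differentiating the relations \eqref{1.20} to transfer the Burgers estimates of Lemma 2.4 to $(V_\pm^r,U_\pm^r,\Theta_\pm^r)$, and on $\Omega_\mp$ using the geometric bound $|x|\gtrsim t$ to convert the Gaussian weight $e^{-c_1 x^2/(1+t)}$ from \eqref{41} into the linear exponential $e^{-c_0(|x|+t)}$---so there is nothing to correct.
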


\section{Proof of Theorem 1.1}
\indent\qquad  In this section, we will give the asymptotic stability results for viscous contact discontinuity wave. 
Substituting \eqref{22} into \eqref{01}-\eqref{03} yields
\begin{gather}
\phi_{t}-\psi_{x}=0,\label{1}\\
\psi_{t}+(p-p_{+})_{x}=\mu\big( \frac{u_{x}}{v}-\frac{U_{x}}{V} \big)_{x}-R_{1},\label{2}\\
c_{\nu}\zeta_{t}+pu_{x}-p_{+}U_{x}=\kappa\big( \frac{\theta_{x}}{v}-\frac{\Theta_{x}}{V} \big)_{x}
+\mu\big( \frac{u_{x}^{2}}{v}-\frac{U_{x}^{2}}{V} \big)-R_{2}.\label{3}
\end{gather}
The local existence of the solution for system \eqref{1}-\eqref{3} can be obtained by the fixed point method and the local linearization method, the detail is omitted, (e.g., see Huang-Matsumura-Shi \cite{HMS2004}). Therefore, in order to extend the local solution to the global solution, the following a prior estimate need to be established.

\vskip 0.3cm
\begin{proposition}{(A priori estimates)}
Assume that the conditions of Theorem 1.1 hold, then there exists a positive constant $\delta_{0}$ such that if $\delta<\delta_{0}$, we have
\begin{gather}
    \sup_{0\leq t\leq T} \|(\phi,\psi,\zeta)\|_{H^{1}(\mathbb{R})}^{2}+\int_{0}^{T}\big(\|\phi_{x}\|^{2}+\|(\psi_{x},\zeta_{x})\|_{H^{1}(\mathbb{R})}^{2}\big)ds<M,\label{propo}
\end{gather}
where $M$ denotes a constant depending on $\mu,\kappa,R,c_{\nu},v_{\pm},u_{\pm},\theta_{\pm}$ and $\delta_{0}$.
\end{proposition}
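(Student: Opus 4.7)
The plan is a continuation argument. I would assume that on $[0,T]$ the local solution already satisfies the ansatz $\sup_{[0,T]}\|(\phi,\psi,\zeta)\|_{H^1}^2+\int_0^T(\|\phi_x\|^2+\|(\psi_x,\zeta_x)\|_{H^1}^2)\,ds\le M$ for some large constant $M$, and then improve this to $\le M/2$ by choosing $\delta<\delta_0$. The subtle point, which dictates the entire architecture of the proof, is that the threshold $\delta_0$ must be determined only by the quantities listed in \eqref{strong} --- in particular independent of $\|\psi_{0x}\|$ and $\|\zeta_{0x}\|$, even though $M$ itself is allowed to depend on them.

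First I would derive a relative-entropy estimate by multiplying \eqref{2} by $\psi$, \eqref{3} by $\zeta/\theta$, and adding a piece of the form $R\Theta\,\Phi(v/V)$ with $\Phi(y)=y-\ln y-1\ge 0$; integration over $\mathbb{R}\times[0,t]$ yields a uniform $L^\infty_t L^2_x$ bound on $(\phi,\psi,\zeta)$ together with an $L^2_{t,x}$ bound on $\sqrt{\theta/v}\,\psi_x$ and $\zeta_x/\sqrt{v\theta}$, the $R_1,R_2$ remainders being absorbed through Lemmas 2.2--2.3 and \eqref{43}. Second, since the far-field states of $V$ differ, a global Kazhikhov formula fails and I would follow Jiang \cite{9,10} and Li-Liang \cite{16} to introduce a spatial cut-off and a local Kazhikhov-Shelukhin representation for $v$, from which the Step 1 control of $\psi,\sqrt\theta\psi_x$ is exactly what is needed to obtain the uniform pointwise bounds $0<\underline v\le v\le\bar v$. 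Third, testing \eqref{3} by $(\zeta-\Theta)_+$ and \eqref{2} by $2\psi(\zeta-\Theta)_+$ and adding causes the dangerous quartic in $\psi,\psi_x$ to cancel; the resulting inequality gives the unweighted $L^2_{t,x}$ bounds on $\zeta_x$ and $\sqrt\theta\psi_x$ together with a pointwise upper bound on $\theta$ in the spirit of Li-Liang. The $\|\psi_0\|_{L^4}$ hypothesis enters precisely to control a residual quartic Gronwall source in this step.

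The novel ingredient is the first-order estimate via a temporal cut-off. I would introduce $\tilde\sigma(t)\in C^\infty$ with $\tilde\sigma(0)=0$ and $\tilde\sigma\equiv 1$ for $t\ge 1$, then multiply \eqref{2} by $-\tilde\sigma\psi_{xx}$ and \eqref{3} by $-\tilde\sigma^2\zeta_{xx}$ and integrate by parts. The boundary contribution at $t=0$ --- which in the standard test would contribute $\|\psi_{0x}\|^2$ and $\|\zeta_{0x}\|^2$ --- vanishes thanks to $\tilde\sigma(0)=0$, and the source $\int_0^1\tilde\sigma'\|\psi_x\|^2\,dt$ is supported on $[0,1]$ and controlled by the local-existence $H^1$ bound, whose cost may enter $M$ but not $\delta_0$. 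The specific weight powers $\tilde\sigma$ versus $\tilde\sigma^2$ are tuned so that the coupling terms involving $V_x,\Theta_x$ can be split via Cauchy-Schwarz and absorbed using the pointwise bounds of the previous steps; a separate test of \eqref{2} by $\phi_x/v$ --- needing no cut-off, since $\|\phi_{0x}\|$ is permitted in \eqref{strong} --- recovers $\|\phi_x\|^2$. For $t\ge 1$, $\tilde\sigma\equiv 1$ and the full $H^1$ estimate is obtained. Closing the bootstrap for $\delta$ small enough yields \eqref{propo}, and the large-time decay \eqref{large1} then follows from the integrability of the derivatives via the usual Sobolev-interpolation argument.

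The main obstacle is the derivative step. The tests $-\tilde\sigma\psi_{xx}$ and $-\tilde\sigma^2\zeta_{xx}$ produce nonlinear cross terms such as $\tilde\sigma\phi_x\psi_{xx}$ and $\tilde\sigma^2\phi\zeta_x\zeta_{xx}$, which couple an already-estimated first derivative to a to-be-estimated second derivative. Absorbing them without triggering a dependence on $\|\psi_{0x}\|$ or $\|\zeta_{0x}\|$ requires the full package of the earlier steps --- pointwise $v,\theta$ bounds, $L^2_{t,x}$ control of $\sqrt\theta\psi_x$ and $\zeta_x$, and the smallness of $\delta$ --- to be deployed simultaneously, with the different weights $\tilde\sigma$ and $\tilde\sigma^2$ playing a structurally essential role rather than a cosmetic one.
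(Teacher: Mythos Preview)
Your outline captures the main ingredients of the paper's proof---the relative-entropy estimate, the Jiang-type local representation for $v$, the $(\zeta-\Theta)_+$ and $2\psi(\zeta-\Theta)_+$ tests, and the temporal cut-off $\tilde\sigma$---but two structural gaps would prevent the argument from closing with $\delta_0$ depending only on the restricted data in \eqref{strong}.

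First, your Step~1 invokes Lemma~\ref{21} to absorb the $R_1,R_2$ remainders in the entropy estimate, but the right-hand side of Lemma~\ref{21} involves $\int_0^t\!\int(\phi_x^2+\psi_x^2+\zeta_x^2)\,dx\,ds$, which is not yet available at that stage; this is circular. The paper resolves this by bootstrapping not on the full $H^1$ norm but on the quantity $G(t)$ of \eqref{25}, i.e., precisely the collection of bad terms on the right of the entropy inequality. Assuming $G(t)\le 2C_0$ yields the entropy bound $\le 3C_0$ directly, all intermediate lemmas are then proved with constants depending only on $C_0$ and the restricted data, and Lemma~\ref{21} is invoked only at the very end to show $G(t)\le C\delta\le C_0$.

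Second, you omit the uniform lower bound on $\theta$, and your proposed control of the source $\int_0^1\tilde\sigma'\|\psi_x\|^2\,dt$ via local-existence $H^1$ bounds would contaminate $\delta_0$. In the paper this term is handled by writing $\psi_x^2\le C(\psi_x^2/\theta+\theta\psi_x^2)$, both pieces already bounded by the entropy estimate and Lemma~\ref{24} with constants depending only on restricted data; this keeps the constant in Lemma~\ref{67} free of $\|\psi_{0x}\|,\|\zeta_{0x}\|$, which in turn fixes a time $T_0$ (after which $|\zeta|<\theta_-/2$) depending only on restricted data. The lower bound $\theta\ge c>0$ on $[0,T_0]$ is then obtained by a separate device---multiplying the temperature equation by $\theta^{-2}(\theta^{-1}-2/\theta_-)_+^{q}$ and sending $q\to\infty$---and it is this lower bound, together with the derivative bounds, that makes $G(t)\le C\delta$ hold with $C$ independent of $\|\psi_{0x}\|,\|\zeta_{0x}\|$. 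If you instead feed a local-existence $H^1$ bound into the $\tilde\sigma'$ term, its constant propagates through $T_0$ and the $\theta$ lower bound into the closing step, forcing $\delta_0$ to depend on $\|\psi_{0x}\|$, which defeats the point of the theorem.
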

The energy inequality \eqref{propo} and the system \eqref{1}-\eqref{3} (respectively, \eqref{3301}-\eqref{3303}) imply that
\begin{gather}
    \int_{0}^{\infty}\Big(\|(\phi_{x},\psi_{x},\zeta_{x})(t)\|^{2}+\big| \frac{d}{dt}\|(\phi_{x},\psi_{x},\zeta_{x})(t)\|^{2} \big|\Big)dt<\infty,\notag
\end{gather}
which together with \eqref{propo} and the Sobolev's inequality, easily leads to the large time behavior of the solution, that is, \eqref{large1} (respectively,\eqref{large}). 

\vskip 0.3cm
\begin{proposition}
There exists a small constant $\delta_{0}$ only depending on  $\inf_{x\in \mathbb{R}}v_{0}$, $\inf_{x\in\mathbb{R}}\theta_{0}$, $\|\phi_{0}\|_{H^{1}(\mathbb{R})}$, $\|\psi_{0}\|_{L^{2}(\mathbb{R})}$, $\|\psi_{0}\|_{L^{4}(\mathbb{R})}$, $\|\zeta_{0}\|_{L^{2}(\mathbb{R})}$, such that  
\begin{equation}\label{G(t)}
  G(t)\leq C_{0},
\end{equation}
provided that  $G(t)\leq 2C_{0} $ and $\delta<\delta_{0}$, where
\begin{eqnarray}
       G(t)&\overset{\mathrm{def}}{=}&\int_{0}^{t}\int \Big( \frac{\kappa\zeta^{2}\Theta_{x}^{2}}{v\theta^{2}\Theta}
+\frac{\kappa\Theta\phi^{2}\Theta_{x}^{2}}{v\theta^{2}V^{2}}
+\frac{\kappa|\zeta\phi|\Theta_{x}^{2}}{v\theta^{2}V}+\frac{4\mu\zeta^{2}U_{x}^{2}}{v\theta\Theta} +\frac{\mu|\zeta\phi| U_{x}^{2}}{v\theta V}+\frac{\mu\theta\phi^{2}U_{x}^{2}}{v\Theta V^{2}}\Big)dxds\notag\\
&&  +\int_{0}^{t}\int\Big( p_{+}\Phi\big( \frac{V}{v} \big)+\frac{p_{+}}{|\gamma-1|}\Phi\big( \frac{\Theta}{\theta} \big)
+\frac{|\zeta|}{\theta}|p_{+}-p| \Big)|U_{x}|dxds\notag\\
&& +\int_{0}^{t}\int \Big(|\psi R_{1}|+\big|\frac{\zeta}{\theta}R_{2}\big|\Big)dxds,\label{25}
    \end{eqnarray}
and
\begin{gather}
    C_{0}\overset{\mathrm{def}}{=} \int\Big(\frac{\psi_{0}^{2}}{2}+R\Theta_{0}\Phi\big( \frac{v_{0}}{V_{0}}\big)+c_{\nu}\Theta_{0}\Phi\big( \frac{\theta_{0}}{\Theta_{0}}  \big)\Big)dx.\label{C0}
\end{gather}
\end{proposition}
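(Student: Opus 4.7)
The strategy is a relative-entropy energy estimate tailored to the viscous contact wave. Set
$$\eta(x,t) := \tfrac12\psi^2 + R\Theta\,\Phi(v/V) + c_\nu\Theta\,\Phi(\theta/\Theta), \qquad \Phi(s):=s-\ln s-1\ge 0,$$
so that $\int\eta(x,0)\,dx = C_0$. Every summand in $G(t)$ is non-negative, and the plan is to derive an identity for $\partial_t\eta$ in which each of those summands appears with the correct sign on the left-hand side while only $C_0$ plus a small remainder sits on the right.

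Concretely, I would multiply \eqref{2} by $\psi$, differentiate $R\Theta\,\Phi(v/V)$ and $c_\nu\Theta\,\Phi(\theta/\Theta)$ in time using \eqref{1}, \eqref{3} and the profile system \eqref{22}, and integrate by parts in $x$. The viscous term $\mu(u_x/v - U_x/V)_x$ then produces the positive dissipation $\mu\Theta\psi_x^2/(v\theta)$ plus cross contributions $\mu U_x \psi_x\cdot(\text{linear in }\phi,\zeta)$; Young's inequality splits these into a half absorbed into the dissipation and the $U_x^2$-weighted quadratics $\frac{4\mu\zeta^2 U_x^2}{v\theta\Theta}$, $\frac{\mu|\zeta\phi|U_x^2}{v\theta V}$, $\frac{\mu\theta\phi^2 U_x^2}{v\Theta V^2}$ that appear in $G(t)$. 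The heat-flux term $\kappa(\theta_x/v - \Theta_x/V)_x$, handled with the multiplier $\zeta/\theta$, yields $\kappa\Theta\zeta_x^2/(v\theta^2)$ together with the $\Theta_x^2$-weighted quadratics in the first block of $G(t)$. The pressure couplings produce the sign-indefinite trio $p_+\Phi(V/v)U_x$, $\frac{p_+}{\gamma-1}\Phi(\Theta/\theta)U_x$, $\frac{\zeta}{\theta}(p_+-p)U_x$, which I bound by their absolute values to recover the second block of $G(t)$, and the remainders $R_1,R_2$ contribute $-\psi R_1 - (\zeta/\theta)R_2$, again absorbed via $|\cdot|$. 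Integrating over $\mathbb{R}\times[0,t]$, discarding the $\partial_x$-flux at infinity, using $\eta\ge 0$, and estimating residuals by \eqref{41}, \eqref{43}, and the weighted bound in Lemma~\ref{21}, I expect an inequality of the form
$$\int\eta(x,t)\,dx + \int_0^t\!\!\int\Bigl(\tfrac{\mu\Theta\psi_x^2}{v\theta}+\tfrac{\kappa\Theta\zeta_x^2}{v\theta^2}\Bigr)dx\,ds + G(t) \;\le\; C_0 + C\delta\bigl(1 + G(t)\bigr).$$
Under the bootstrap $G(t)\le 2C_0$ together with the accompanying pointwise bounds on $v,\theta$ (established in the companion lemma by a cut-off argument and depending only on the quantities in \eqref{strong}), all denominators in $G(t)$ are uniformly controlled, and choosing $\delta_0$ small then yields $G(t)\le C_0$.

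The main obstacle is confining $\delta_0$ to depend only on the quantities in \eqref{strong} — in particular, not on $\|\phi_{0x}\|$ or $\|\zeta_{0x}\|$. This forces every Young splitting to leave only residuals already inside $G(t)$ or inside the basic dissipation: the specific weights $\frac{4\mu\zeta^2 U_x^2}{v\theta\Theta}$ and $\frac{\mu\theta\phi^2 U_x^2}{v\Theta V^2}$ are exactly those produced when the viscous cross terms $\mu U_x\psi_x\cdot(\phi,\zeta)$ are split with coefficient $\tfrac12$, so no higher-derivative norms enter the closure. A parallel balancing is needed for the $\Theta_x$-induced cross terms in the heat flux, and whenever a $w$-weighted $L^2$ norm of $(\phi,\psi,\zeta)$ appears it is converted to the integrated dissipation via Lemma~\ref{21}, keeping the argument strictly at the zeroth-order level.
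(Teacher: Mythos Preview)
Your displayed target inequality has $G(t)$ on the wrong side. The relative-entropy computation you describe is exactly the paper's \eqref{new1}--\eqref{new}: after Young-splitting the cross terms in $Q$ and taking absolute values of the $U_x$ and $R_i$ contributions, one obtains
\[
\int\eta\,dx \;+\; \tfrac12\int_0^t\!\!\int\Bigl(\tfrac{\mu\Theta\psi_x^2}{v\theta}+\tfrac{\kappa\Theta\zeta_x^2}{v\theta^2}\Bigr)dx\,ds
\;\le\; C_0 \;+\; G(t),
\]
i.e.\ the summands of $G(t)$ are the \emph{error} on the right, not a coercive quantity on the left. Under the bootstrap this gives only $\le 3C_0$; it does not by itself improve $G(t)$.

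The genuine gap is the step you wave past: ``the accompanying pointwise bounds on $v,\theta$ established in the companion lemma by a cut-off argument.'' The Jiang--type cut-off argument yields only $C^{-1}\le v\le C$ (Lemma~\ref{thbelow}). It gives no lower bound on $\theta$, and without $\theta\ge c>0$ you cannot strip the $1/\theta$ and $1/\theta^2$ from the integrands of $G(t)$ to reach the form $C\delta\int(\phi^2+\psi^2+\zeta^2)w^2$, nor can you pass from the weighted dissipation $\int\psi_x^2/\theta$, $\int\zeta_x^2/\theta^2$ to the unweighted $\int(\phi_x^2+\psi_x^2+\zeta_x^2)$ that Lemma~\ref{21} requires. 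In the paper this lower bound on $\theta$ is the bulk of the proof and is obtained only after a long chain: Lemma~\ref{24} (multiply \eqref{3} by $(\zeta-\Theta)_+$ and \eqref{2} by $2\psi(\zeta-\Theta)_+$ and by $\psi^3$) to control $\int\!\!\int\theta\psi_x^2$, $\int\!\!\int\zeta_x^2$, $\int\!\!\int\theta\phi_x^2$; then Lemma~\ref{67} with the time weight $\tilde\sigma$ to force $\|\zeta(\cdot,t)\|_{L^\infty}\to 0$, hence $\theta>\theta_-/2$ for $t\ge T_0$; and finally a maximum-principle argument (multiplying \eqref{1.3} by $\theta^{-2}(\theta^{-1}-2/\theta_-)_+^q$ and letting $q\to\infty$) to cover $[0,T_0]$. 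Only after all this can one write $G(t)\le C\delta\bigl(1+\int_0^t\!\!\int(\phi_x^2+\psi_x^2+\zeta_x^2)\bigr)\le C\delta$ and choose $\delta_0$. Your proposal omits precisely the machinery that makes the closure possible and that pins the dependence of $\delta_0$ to the quantities in \eqref{strong}.
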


\vskip 0.3cm
Before proving Proposition 3.2, let us  give the following series of Lemmas.
\begin{lemma}
There exists two positive constants $\alpha_{1}$, $\alpha_{2}$ both depending on $R,c_{\nu},\theta_{-}$ and $C_{0}$,  such that 
\begin{gather}
\alpha_{1}\leq\int_{k}^{k+1}\bar{v}dx,\quad \int_{k}^{k+1}\bar{\theta}dx\leq \alpha_{2},\quad t\geq 0,\label{alpha1}
\end{gather}
and for each $t\geq 0$, there are points $a_{k}(t), b_{k}(t)\in [k,k+1]$ such that
\begin{gather}
\alpha_{1} \leq \bar{v}(a_{k}(t),t),\quad \bar{\theta}(b_{k}(t),t)\leq \alpha_{2},\label{32}
\end{gather}
where $k=0,\pm 1,\pm 2,...$.
\end{lemma}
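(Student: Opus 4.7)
The plan is to read off \eqref{alpha1}--\eqref{32} directly from the basic entropy-type identity that the standing hypothesis $G(t)\le 2C_{0}$ packages together with the functional $C_{0}$ defined in \eqref{C0}. The key structural fact is that the convex Bregman divergence $\Phi(x):=x-\ln x-1$ appearing in the definition of $C_0$ and $G(t)$ is nonnegative, vanishes only at $1$, and blows up both at $0^{+}$ and at $+\infty$. This coercivity, paired with Jensen's inequality, converts an $L^{1}_{x}$ bound into two-sided control of the averages on every unit interval.

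First, I would perform the standard basic energy estimate on the perturbation system \eqref{1}--\eqref{3}: multiply \eqref{2} by $\psi$, \eqref{3} by $\zeta/\theta$, and combine these with \eqref{1} weighted by an appropriate multiple involving $R\Theta$ (as in Huang--Li--Matsumura \cite{HLM2010}). After integration by parts over $\mathbb{R}\times(0,t)$, the dissipation and wave-interaction contributions assemble into precisely $G(t)$ (up to harmless perturbations absorbed by $\delta_0$), yielding
$$\int\Big(\tfrac{\psi^{2}}{2}+R\Theta\,\Phi(v/V)+c_{\nu}\Theta\,\Phi(\theta/\Theta)\Big)(x,t)\,dx+G(t)=C_{0}.$$
Plugging in $G(t)\le 2C_{0}$ and noting that Lemma 2.1 with $\delta<\delta_0$ keeps $V$ and $\Theta$ pinched in a fixed compact subinterval of $(0,\infty)$ depending only on $v_\pm,\theta_\pm$, I obtain the uniform-in-time bound
$$\int\Phi(v/V)\,dx+\int\Phi(\theta/\Theta)\,dx\le C,$$
with $C$ depending only on $C_0$ and the fixed far-field data.

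Next, to extract \eqref{alpha1}, I localize to an arbitrary $[k,k+1]$ and invoke Jensen's inequality applied to the convex function $\Phi$:
$$\Phi\Big(\int_{k}^{k+1}\tfrac{v}{V}\,dx\Big)\le\int_{k}^{k+1}\Phi(v/V)\,dx\le C,$$
and symmetrically for $\theta/\Theta$. Since the sublevel set $\Phi^{-1}([0,C])$ is a compact subinterval of $(0,\infty)$, both $\int_{k}^{k+1}(v/V)\,dx$ and $\int_{k}^{k+1}(\theta/\Theta)\,dx$ are trapped in fixed compact intervals. Multiplying through by the uniform lower bound of $V$ produces $\int_{k}^{k+1}\bar v\,dx\ge\alpha_{1}$, and the uniform upper bound of $\Theta$ produces $\int_{k}^{k+1}\bar\theta\,dx\le\alpha_{2}$. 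The pointwise claim \eqref{32} then follows instantly from the integral mean value theorem on an interval of unit length.

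The only delicate point — and the one to watch carefully — is bookkeeping: I must verify that $\alpha_{1},\alpha_{2}$ depend solely on $C_{0}$ together with $R,c_{\nu},\theta_{-},v_\pm,\theta_\pm$, and not on any norm of the solution $(v,u,\theta)$ itself. This is automatic here because $G(t)\le 2C_{0}$ is precisely the bootstrap hypothesis and Lemma 2.1 controls $V,\Theta$ by data-independent quantities. No cut-off function or Jiang-type local representation is needed at this stage; those tools enter only in the subsequent Lemma 3.2, where the pointwise bounds on $a_{k}(t),b_{k}(t)$ constructed here serve as the indispensable anchor points for propagating the uniform positive lower and upper bounds on $v$ and $\theta$.
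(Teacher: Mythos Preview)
Your approach is essentially the paper's: derive the basic entropy identity via the standard multipliers $\psi$, $\zeta/\theta$, and $-R\Theta(v^{-1}-V^{-1})$, invoke the bootstrap hypothesis $G(t)\le 2C_{0}$ to bound $\int\bigl(R\Theta\Phi(v/V)+c_{\nu}\Theta\Phi(\theta/\Theta)\bigr)\,dx$, apply Jensen's inequality to the convex $\Phi$, then the mean value theorem. There is no substantive difference in strategy.

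Two bookkeeping slips to fix. First, the displayed identity
\[
\int\Bigl(\tfrac{\psi^{2}}{2}+R\Theta\,\Phi(v/V)+c_{\nu}\Theta\,\Phi(\theta/\Theta)\Bigr)\,dx+G(t)=C_{0}
\]
has the sign of $G(t)$ backwards: $G(t)$ collects the \emph{bad} source terms arising from $Q$, $R_{1}$, $R_{2}$ and the wave interactions, so it belongs on the right, not the left. The correct inequality reads
\[
\int\Bigl(\tfrac{\psi^{2}}{2}+R\Theta\,\Phi(v/V)+c_{\nu}\Theta\,\Phi(\theta/\Theta)\Bigr)\,dx+\tfrac12\int_{0}^{t}\!\!\int\Bigl(\tfrac{\mu\Theta\psi_{x}^{2}}{v\theta}+\tfrac{\kappa\Theta\zeta_{x}^{2}}{v\theta^{2}}\Bigr)\,dx\,ds\;\le\;C_{0}+G(t)\;\le\;3C_{0}.
\]
Taken literally, your version together with $G(t)\le 2C_{0}$ yields only the vacuous $\int(\cdots)\,dx\ge -C_{0}$. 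Second, since by definition $\bar v=v/V$ and $\bar\theta=\theta/\Theta$, the Jensen step already delivers the bounds on $\int_{k}^{k+1}\bar v\,dx$ and $\int_{k}^{k+1}\bar\theta\,dx$ directly; the extra step ``multiplying through by the uniform lower bound of $V$'' is superfluous and signals a notational mix-up.
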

\begin{proof}
Multiplying \eqref{1} by $-R\Theta(v^{-1}-V^{-1})$, \eqref{2} by $\psi$, and \eqref{3} by $\zeta\theta^{-1}$,
and then adding the resulting equations together, we have
\begin{equation}
\begin{split}
&\Big(\frac{\psi^{2}}{2}+R\Theta \Phi\big(\frac{v}{V}\big)
+c_{\nu}\Theta\Phi\big(\frac{\theta}{\Theta}\big) \Big)_{t}
+\frac{\mu\Theta\psi_{x}^{2}}{v\theta}+\frac{\kappa\Theta\zeta_{x}^{2}}{v\theta^{2}}=H_{x}+Q-\psi R_{1}-\frac{\zeta}{\theta}R_{2},\label{new1}
\end{split}
\end{equation}
with $\Phi(z)=z-\ln z-1$ for $z>0$, and
\begin{equation}
\begin{split}
&H=(p_{+}-p)\psi+\mu\big(\frac{u_{x}}{v}-\frac{U_{x}}{V} \big)\psi+\frac{\kappa\zeta}{\theta}\big(\frac{\theta_{x}}{v}-\frac{\Theta_{x}}{V} \big),\\
&Q=-p_{+}\Phi\big(\frac{V}{v}\big)U_{x}+\frac{p_{+}}{1-\gamma}\Phi\big(\frac{\Theta}{\theta}\big)U_{x}
-\mu(\frac{1}{v}-\frac{1}{V})\psi_{x}U_{x}\\
&\quad \quad +\frac{\zeta}{\theta}(p_{+}-p)U_{x}+\frac{\kappa\Theta_{x}}{v\theta^{2}}\zeta\zeta_{x}
+\frac{\kappa\Theta\Theta_{x}}{Vv\theta^{2}}\phi\zeta_{x}-\frac{\kappa\Theta_{x}^{2}}{Vv\theta^{2}}\zeta\phi+\frac{2\mu U_{x}}{v\theta}\zeta\psi_{x}-\frac{\mu U_{x}^{2}}{Vv\theta}\zeta\phi,\notag
\end{split}
\end{equation}
where
\begin{equation}
\begin{split}
Q\leq &\frac{\mu\Theta}{2v\theta}\psi_{x}^{2}+\frac{\kappa\Theta}{2v\theta^{2}}\zeta_{x}^{2}+\Big( p_{+}\Phi\big( \frac{V}{v} \big)+\frac{p_{+}}{|\gamma-1|}\Phi\big( \frac{\Theta}{\theta} \big)
+\frac{|\zeta|}{\theta}|p_{+}-p| \Big)|U_{x}|\\
&+\big( \frac{\kappa\zeta^{2}\Theta_{x}^{2}}{v\theta^{2}\Theta}
+\frac{\kappa\Theta\phi^{2}\Theta_{x}^{2}}{v\theta^{2}V^{2}}
+\frac{\kappa|\zeta\phi|\Theta_{x}^{2}}{v\theta^{2}V}+\frac{4\mu\zeta^{2}U_{x}^{2}}{v\theta\Theta}+\frac{\mu|\zeta\phi| U_{x}^{2}}{v\theta V}
+\frac{\mu\theta\phi^{2}U_{x}^{2}}{v\theta V^{2}} \big).\label{Q}
\end{split}
\end{equation}
Then, integrating \eqref{new1} over $\mathbb{R}\times(0,t)$ and using \eqref{Q} lead to  
\begin{eqnarray}
&&\int\Big(\frac{\psi^{2}}{2}+R\Theta\Phi\big( \frac{v}{V}\big)+c_{\nu}\Theta\Phi\big( \frac{\theta}{\Theta}  \big)\Big)dx
+ \frac{1}{2}\int_{0}^{t}\int\big( \frac{\mu\Theta\psi_{x}^{2}}{v\theta}+\frac{\kappa\Theta\zeta_{x}^{2}}{v\theta^{2}} \big)dxds\notag\\
&&\leq \int_{0}^{t}\int\Big( p_{+}\Phi\big( \frac{V}{v} \big)+\frac{p_{+}}{|\gamma-1|}\Phi
\big( \frac{\Theta}{\theta} \big)
+\frac{|\zeta|}{\theta}|p_{+}-p| \Big)|U_{x}|dxds\label{new} \\
&&\quad  +\int_{0}^{t}\int \big( \frac{\kappa\zeta^{2}\Theta_{x}^{2}}{v\theta^{2}\Theta}
+\frac{\kappa\Theta\phi^{2}\Theta_{x}^{2}}{v\theta^{2}V^{2}}
+\frac{\kappa|\zeta\phi|\Theta_{x}^{2}}{v\theta^{2}V}+\frac{4\mu\zeta^{2}U_{x}^{2}}{v\theta\Theta} +\frac{\mu|\zeta\phi| U_{x}^{2}}{v\theta V}+\frac{\mu\theta\phi^{2}U_{x}^{2}}{v\Theta V^{2}}\big)dxds\notag\\
&&\quad +\int_{0}^{t}\int \Big(|\psi R_{1}|+\big|\frac{\zeta}{\theta}R_{2}\big|\Big)dxds
+\int\Big(\frac{\psi_{0}^{2}}{2}+R\Theta_{0}\Phi\big( \frac{v_{0}}{V_{0}}\big)+c_{\nu}\Theta_{0}\Phi\big( \frac{\theta_{0}}{\Theta_{0}}  \big)\Big)dx\leq 3C_{0},\notag
\end{eqnarray}
which in particular implies
\begin{gather}
   \int_{k}^{k+1}\big(\bar{v}(x,t)-\ln\bar{v}(x,t)-1\big)dx\leq \frac{3C_{0}}{ R\theta_{-}},\qquad
   \int_{k}^{k+1}(\bar{\theta}(x,t)-\ln\bar{\theta}(x,t)-1)dx\leq \frac{3C_{0}}{ c_{\nu}\theta_{-}},\notag
\end{gather}
where $\bar{v}=v/V$, $\bar{\theta}=\theta/\Theta$, and $k=0,\pm 1,\pm 2,...$. Applying Jessen's inequality to the convex function $y-\ln y-1=\min\left\{   \frac{3C_{0}}{ R\theta_{-}}, \frac{3C_{0}}{ c_{\nu}\theta_{-}} \right\}$ yields
\begin{gather}
    \int_{k}^{k+1}\bar{v}(x,t)dx-\ln \int_{k}^{k+1}\bar{v}(x,t)dx-1\leq \min\Big\{   \frac{3C_{0}}{ R\theta_{-}}, \frac{3C_{0}}{ c_{\nu}\theta_{-}} \Big\},\notag\\
    \int_{k}^{k+1}\bar{\theta}(x,t)dx-\ln \int_{k}^{k+1}\bar{\theta}(x,t)dx-1\leq \min\Big\{   \frac{3C_{0}}{ R\theta_{-}}, \frac{3C_{0}}{ c_{\nu}\theta_{-}} \Big\},\notag
\end{gather}
which gives
\begin{gather}
\alpha_{1}\leq\int_{k}^{k+1}\bar{v}dx,\quad \int_{k}^{k+1}\bar{\theta}dx\leq \alpha_{2},\notag
\end{gather}
where $\alpha_{1}$, $\alpha_{2}$ are two positive roots of the equation 
$$y-\ln y-1=\min\left\{   \frac{3C_{0}}{ R\theta_{-}}, \frac{3C_{0}}{ c_{\nu}\theta_{-}} \right\}.$$
 Moreover, in view of the mean value theorem, for each $t\geq 0$, there exist points $a_{k}(t)$, $b_{k}(t)\in[k,k+1]$ such that
\begin{equation}
\alpha_{1} \leq \bar{v}(a_{k}(t),t),\quad \bar{\theta}(b_{k}(t),t)\leq \alpha_{2}.
\end{equation}
\end{proof}

\vskip 0.3cm
\begin{lemma}\label{thbelow}
There exists a  positive constant $C$ depending on $\displaystyle\inf_{x\in\mathbb{R}}v_{0}, \|\phi_{0}\|_{H^{1}(\mathbb{R})}, \|\psi_{0}\|_{L^{2}(\mathbb{R})}$ and $C_{0}$,  such that
\begin{gather}
C^{-1}\leq v(x,t)\leq C,\quad x\in \mathbb{R},~ t\geq 0.\label{v}
\end{gather}
\end{lemma}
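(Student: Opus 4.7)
The plan is to establish the uniform two-sided pointwise bound on $v$ via a Kazhikhov-Shelukhin representation formula combined with Jiang's cut-off technique---the standard approach for Cauchy problems on $\mathbb{R}$ with distinct far-field states---leaning heavily on the a priori hypothesis $G(t)\leq 2C_0$ from Proposition 3.2 and on the anchor points supplied by Lemma 3.1.

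First I would use Lemma 3.1 together with its obvious companion (the same Jensen-type application of $y - \ln y - 1$ giving an upper bound as well) to pin $\bar v(a_k(t),t)$ into a fixed compact subinterval of $(0,\infty)$ for every integer $k$ and every $t$. Since Lemma 2.1 makes $V$ uniformly positive and bounded once $\delta_0$ is small, this produces uniform two-sided bounds for $v(a_k(t),t)$ depending only on the norms in \eqref{strong} and the far-field data.

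Next, starting from $(\ln v)_t = u_x/v$ (from $v_t = u_x$) together with $\mu(u_x/v)_x = u_t + p_x$ (the momentum equation), combine to obtain $\mu(\ln v)_{xt} = u_t + p_x$; integrate in $t$ and then in $x$ from $a_k(t)$ to $x \in [k,k+1]$; and rearrange using the effective viscous flux $\sigma = \mu u_x/v - (p - p_+)$ to arrive at a representation of the form
\begin{equation*}
v(x,t) = v_0(x)\,\frac{v(a_k(t),t)}{v_0(a_k(t))}\,B_k(x,t)\,\Big[1 + \tfrac{R}{\mu}\int_0^t \theta(x,s)\,B_k(x,s)^{-1}\,ds\Big]^{-1},
\end{equation*}
where $B_k(x,t)$ is the exponential of a spatial integral of $u - u_0$ on $[a_k(t),x]$ together with a time-integral correction. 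On the unit cell $[k,k+1]$ the spatial integral is controlled by $\|\psi\|_{L^2}$, $\|\psi_0\|_{L^2}$, and the bounded $(U,U_0)$ from Lemma 2.1, while the time integrals reduce to integrands already contained in $G(t)$---the $\Phi(V/v)|U_x|$, $\Phi(\Theta/\theta)|U_x|$, and $(|\zeta|/\theta)|p-p_+||U_x|$ pieces---and are thus bounded by $2C_0$. The upper bound on $v$ then follows by bounding the numerator and leaving the denominator $\geq 1$; the lower bound follows by bounding the denominator above using the same estimates and bounding the numerator below via the anchor value and $\inf_x v_0$.

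The main obstacle is the circular dependence between $v$ and $p = R\theta/v$: a direct bound on $\int_0^t p\,ds$ would blow up if $v$ approached vacuum. The Kazhikhov-Shelukhin formula is precisely the device that sidesteps this, converting $\int_0^t p\,ds$ into a positive denominator that can only decrease $v$ and simultaneously supplying a strictly positive lower bound by its boundedness from above. A secondary subtlety is that the resulting constant must depend only on the norms appearing in \eqref{strong}; the choice of a unit-length cut-off ensures the spatial integrals are controlled by $\|\psi_0\|_{L^2}$ and by $\|\phi_0\|_{H^1}\hookrightarrow L^\infty$ (which yields pointwise control of $v_0$ since $v_0 = V_0+\phi_0$), without invoking any derivative norm of $\psi_0$ or $\zeta_0$.
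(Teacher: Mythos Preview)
Your representation formula is not correct as written: the Kazhikhov--Shelukhin trick turns $\exp\bigl(\tfrac{R}{\mu}\int_0^t\theta/v\,ds\bigr)$ into a \emph{multiplicative} factor $1+\tfrac{R}{\mu}\int_0^t\theta\,A^{-1}\,ds$, not a denominator; with your bracket on the bottom, the argument ``denominator $\geq 1$ gives the upper bound'' would actually give a lower bound and vice versa. More seriously, the claim that the time-integral correction in $B_k$ ``reduces to integrands already contained in $G(t)$'' is false. Whether you anchor at $a_k(t)$ or average over $[k,k+1]$, what survives is a term of the form $\int_0^t p(a,s)\,ds$ (or $\int_0^t\int_k^{k+1}\sigma\,dx\,ds$), and $p=R\theta/v$ carries no factor of $|U_x|$, $\Theta_x^2$, or $U_x^2$. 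Every integrand in $G(t)$ is weighted by such a decaying factor, so $G(t)\le 2C_0$ gives you nothing here. Likewise, your ``bound the denominator above'' step for the lower bound requires controlling $\int_0^t\theta(x,s)\,ds$, which is not available at this stage.

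What the paper actually does is quite different in mechanism. It averages $\sigma=\mu u_x/v - R\theta/v$ over the unit cell to define $Y_k(t)=\exp\int_0^t\int_k^{k+1}\sigma\,dx\,ds$, and then shows $Y_k(t)\le Ce^{-t/C}$ and $Y_k(t)/Y_k(s)\le Ce^{-(t-s)/C}$ by exploiting the \emph{sign} of the pressure piece: $-\int_k^{k+1}R\theta/v\,dx\le -C^{-1}\inf_{[k,k+1]}\theta$, and a Jensen-plus-$\zeta_x$ argument yields $\int_s^t\inf\theta\,d\tau\ge C^{-1}(t-s)-C$. With this decay, the formula gives $v\le C+C\int_0^t\theta\,e^{-(t-s)/C}\,ds$; a pointwise oscillation estimate $\theta\le C+C(\max_x v)\int(\zeta_x^2+\zeta^2\Theta_x^2)/(v\theta^2)\,dx$ then closes the upper bound by Gronwall. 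The lower bound is obtained separately (large-time from the decay and the cell average, short-time by another Gronwall on $Y_k^{-1}$). None of these steps is a consequence of $G(t)\le 2C_0$; that hypothesis enters only through \eqref{new}, which supplies the dissipation integrals $\int\!\!\int\psi_x^2/(v\theta)$ and $\int\!\!\int\zeta_x^2/(v\theta^2)$ used along the way.
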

\begin{proof}
Integrating \eqref{02}  over $[k,x]$, one obtains
\begin{gather}
\big( \int_{k}^{x}udy \big)_{t}=\sigma(x,t)-\sigma(k,t),\quad x\in[k,k+1],\label{42}
\end{gather}
where 
\begin{equation}\label{sigma}
  \sigma=\frac{\mu u_{x}}{v}-\frac{R\theta}{v},
\end{equation}
Integrating \eqref{42} with respect to $x$ over $[k,k+1]$, one has
\begin{gather}
\sigma(k,t)=\int_{k}^{k+1}\sigma(x,t)dx-\big( \int_{k}^{k+1}\int_{k}^{x}udydx \big)_{t}.\notag
\end{gather}
Combining this with \eqref{42}, one gets
\begin{gather}
\displaystyle v(x,t)=B_{k}(x,t)Y_{k}(t)\mathrm{exp}\big(\int_{0}^{t}\frac{\theta}{v}ds\big),\label{425}
\end{gather}
with
\begin{eqnarray}
&&\displaystyle B_{k}(x,t)=v_{0}(x)\mathrm{exp}\Big({\int_{k}^{x}(u-u_{0})dy-\int_{k}^{k+1}\int_{k}^{x}(u-u_{0})dydx}\Big),\\
&&\displaystyle Y_{k}(t)=\mathrm{exp}\Big({\int_{0}^{t}\int_{k}^{k+1}\sigma dxds}\Big).
\end{eqnarray}
Using \eqref{425}, direct computation gives
\begin{gather}
v(x,t)=B_{k}(x,t)Y_{k}(t)+\int_{0}^{t}\frac{B_{k}(x,t)Y_{k}(t)}{B_{k}(x,s)Y_{k}(s)}\theta(x,s)ds,\label{vv}
\end{gather}
by \eqref{new}, for $(x,t)\in\mathbb{R}\times[0,+\infty)$, it holds
\begin{gather}
 0<C^{-1}\leq  B_{k}(x,t)\leq C<\infty.\label{31}
\end{gather}
From now on, we always assume $\theta_{-}<\theta_{+}$ for convenience. Thus, from the properties of the viscous contact wave, we have $\theta_{-}<\Theta(x,t)<\theta_{+}$ and $v_{-}<V(x,t)<v_{+}$. For each $t\geq 0$, there exists at least one point $x_{k}(t)\in [k,k+1]$, such that
\begin{gather}
\inf_{x\in [k,k+1]}\bar{\theta}(x,t)=\bar{\theta}(x_{k}(t),t).\notag
\end{gather}
By Cauchy's inequality and using \eqref{new}, it yields that 
\begin{equation}
\begin{split}
&\Big|\int_{s}^{t}\int_{b_{k}(\tau)}^{x_{k}(\tau)}\frac{\bar{\theta}_{y}}{\bar{\theta}}(y,\tau)dyd\tau \Big|=\Big|\int_{s}^{t}\int_{b_{k}(\tau)}^{x_{k}(\tau)}\Big( \frac{\zeta_{y}}{\theta}-\frac{\zeta\Theta_{y}}{\theta\Theta} \Big)dyd\tau \Big|\\
&\leq \int_{s}^{t}\Big( \int_{k}^{k+1}\frac{\zeta_{x}^{2}}{v\theta^{2}}dx \Big)^{1/2}\Big( \int_{k}^{k+1}vdx \Big)^{1/2}d\tau+
\int_{s}^{t}\Big( \int_{k}^{k+1}\frac{\zeta^{2}\Theta_{x}^{2}}{v\theta^{2}\Theta}dx \Big)^{1/2}
\Big( \int_{k}^{k+1}vdx \Big)^{1/2}d\tau \\
&\leq C\Big(\int_{s}^{t} \int_{k}^{k+1}\frac{\zeta_{x}^{2}}{v\theta^{2}}dx d\tau\Big)^{1/2}\sqrt{t-s}+
C\Big(\int_{s}^{t}\int_{k}^{k+1}\frac{\zeta^{2}\Theta_{x}^{2}}{v\theta^{2}\Theta^{2}}dx d\tau\Big)^{1/2}\sqrt{t-s}\\
&\displaystyle\leq C\sqrt{t-s}.\notag
\end{split}
\end{equation}
Applying Jensen's inequality to the convex function $e^{x}$, one derives, for $t\geq s\geq 0$,
\begin{equation}
\begin{split}
\int_{s}^{t}&\inf_{x\in[k,k+1]}\bar{\theta}(\cdot,\tau)d\tau=\int_{s}^{t}\bar{\theta}(x_{k}(\tau),\tau)d\tau
=\int_{s}^{t}\exp{\big(\ln\bar{\theta}(x_{k}(\tau),\tau)\big)d\tau}\\
&\geq (t-s)\exp{\big( \frac{1}{t-s}\int_{s}^{t}\ln\bar{\theta}(x_{k}(\tau),\tau)d\tau \big)}\\
&=(t-s)\exp{\Big( \frac{1}{t-s}\int_{s}^{t}\big( \int_{b_{k}}^{x_{k}}\frac{\bar{\theta_{y}}}{\bar{\theta}}dy+\ln\bar{\theta}(b_{k}(\tau),\tau) \big)d\tau \Big)}\\
&\geq (t-s)\exp \Big( -\max\{|\ln\alpha_{1}|,|\ln\alpha_{2}|\}-\frac{1}{t-s}\big| \int_{s}^{t}\int_{b_{k}(\tau)}^{x_{k}(\tau)}\frac{\bar{\theta_{y}}}{\bar{\theta}}dyd\tau \big| \Big)\\
&\geq C(t-s)e^{-\frac C{\sqrt{t-s}}},\notag
\end{split}
\end{equation}
which, together with \eqref{new} and Lemma 2.1, it yields that 
\begin{equation}
\begin{split}
\int_{s}^{t}&\int_{k}^{k+1}\sigma(x,\tau)dxd\tau=\int_{s}^{t}\int_{k}^{k+1}\big( \mu\frac{\psi_{x}}{v}-R\frac{\theta}{v}+\mu\frac{U_{x}}{v} \big)(x,\tau)dxd\tau\\
&\leq C\int_{s}^{t}\int_{k}^{k+1}\frac{\psi_{x}^{2}}{v\theta}dxd\tau
-\frac{R}{2}\int_{s}^{t}\int_{k}^{k+1}\frac{\theta}{v}dxd\tau\\
&\quad +\mu\int_{s}^{t}\int_{k}^{k+1}\frac{|U_{x}|}{v}\chi_{\{ v\leq \frac{v_{-}}{2} \}}dxd\tau
 +\mu\int_{s}^{t}\int_{k}^{k+1}\frac{|U_{x}|}{v}\chi_{\{ v> \frac{v_{-}}{2} \}}dxd\tau\\
&\leq C-\frac{R}{2}\int_{s}^{t}\big(\inf_{x\in[k,k+1]}\theta\big)\big( \int_{k}^{k+1}v^{-1}dx \big)d\tau\\
&\quad +C\int_{s}^{t}\int_{k}^{k+1}\Phi\big( \frac{V}{v}\big)\big| U_{x}  \big|dxd\tau
+C\int_{s}^{t}\frac{1}{\sqrt{1+\tau}}d\tau\\
&\leq C-\frac{t-s}{C}+C \big( \sqrt{1+t}-\sqrt{1+s} \big)\\
&\leq 2C-\frac{t-s}{2C}.\label{sigma}
\end{split}
\end{equation}
It follows from the definition of $Y_{k}(t)$ and  \eqref{sigma} that
\begin{gather}
0\leq Y_{k}(t)\leq C e^{-\frac tC},\quad  \frac{Y_{k}(t)}{Y_{k}(s)}\leq Ce^{-\frac{t-s}C},\notag
\end{gather}
which combined with \eqref{vv} and \eqref{31}  gives
\begin{gather}
v(x,t)\leq C+C\int_{0}^{t}\theta(x,s)e^{-\frac{t-s}C}ds.\notag
\end{gather}
On the other hand, for any $x\in [k,k+1]$, it holds that
\begin{equation}
\begin{split}
\big|& \bar{\theta}^{\frac12}(x,t)-\bar{\theta}^{\frac12}(b_{k}(t),t) \big|\leq \int_{k}^{k+1}\frac{|\bar{\theta}_{x}|}{\bar{\theta}^{\frac{1}{2}}}dx\\
&\leq \int_{k}^{k+1}\Big(\frac{\Theta}{\theta} \Big)^{\frac12}\Big( \big|\frac{\zeta_{x}}{\Theta}\big|+\big|\frac{\zeta\Theta_{x}}{\Theta^{2}}\big| \Big)dx\leq C\int_{k}^{k+1}\frac{|\zeta_{x}|}{\sqrt{\theta}}dx+C\int_{k}^{k+1}\frac{| \zeta\Theta_{x} |}{\sqrt{\theta}}dx\\
&\leq C\Big( \int_{k}^{k+1}\frac{\zeta_{x}^{2}}{v\theta^{2}}dx \Big)^{\frac{1}{2}}
\Big( \int_{k}^{k+1}v\theta dx \Big)^{\frac{1}{2}}
+C\Big( \int_{k}^{k+1}\frac{\zeta^{2}\Theta_{x}^{2}}{v\theta^{2}}dx \Big)^{\frac{1}{2}}
\Big( \int_{k}^{k+1}v\theta dx \Big)^{\frac{1}{2}}\\
&\leq C\Big( \big( \int_{k}^{k+1}\frac{\zeta_{x}^{2}}{v\theta^{2}}dx \big)^\frac{1}{2}+
\big( \int_{k}^{k+1}\frac{\zeta^{2}\Theta_{x}^{2}}{v\theta^{2}}dx \big)^\frac{1}{2}\Big)\max_{x\in [k,k+1]}v^{\frac{1}{2}}(x,t),\notag
\end{split}
\end{equation}
and $k=0,\pm1,\pm2,...$, which, along with \eqref{new}, it leads to
\begin{equation}
\begin{split}
\frac{C}{3}&-C\big(\max_{x\in\mathbb{R}}v(x,t)\big)\int\frac{\zeta_{x}^{2}+\zeta^{2}\Theta_{x}^{2}}{v\theta^{2}}dx\leq\theta(x,t)\\
&\leq C+C\big(\max_{x\in\mathbb{R}}v(x,t)\big)\int\frac{\zeta_{x}^{2}+\zeta^{2}\Theta_{x}^{2}}{v\theta^{2}}dx
,\quad \forall x\in\mathbb{R}.\label{ta}
\end{split}
\end{equation}
Substituting \eqref{ta} into \eqref{vv} , and applying Gronwall's inequality and \eqref{32}, one has
\begin{gather}
v(x,t)\leq C,\quad x\in\mathbb{R},t\geq 0.\label{sj}
\end{gather}
Integrating \eqref{vv} over $[k,k+1]$ with respect to $x$, after using \eqref{alpha1}, one obtains
\begin{equation}
\begin{split}
v_{-}\alpha_{1}&\leq Ce^{-\frac tC}+C\int_{0}^{t}\frac{Y(t)}{Y(s)}\int_{k}^{k+1}\theta(x,s)dxds\\
&\leq Ce^{-\frac tC}+C\int_{0}^{t}\frac{Y(t)}{Y(s)}ds,\notag
\end{split}
\end{equation}
and this directly yields that
\begin{gather}
\int_{0}^{t}\frac{Y(t)}{Y(s)}ds\geq C-Ce^{-\frac tC}.\notag
\end{gather}
From \eqref{vv}, and using \eqref{new}, \eqref{ta} and \eqref{sj}, one has
\begin{equation}
\begin{split}
v(x,t)&\geq C\int_{0}^{t}\frac{Y(t)}{Y(s)}\theta(x,s)ds\\
&\geq C\int_{0}^{t}\frac{Y(t)}{Y(s)}ds-C\int_{0}^{\frac{t}{2}}\frac{Y(t)}{Y(s)}
\int\frac{\zeta_{x}^{2}+\zeta^{2}\Theta_{x}^{2}}{v\theta^{2}}dxds -C\int_{\frac{t}{2}}^{t}\frac{Y(t)}{Y(s)}\int\frac{\zeta_{x}^{2}
+\zeta^{2}\Theta_{x}^{2}}{v\theta^{2}}dxds\\
&\geq C-Ce^{-\frac tC}-Ce^{-\frac t{2C}}\int_{0}^{t}\int\frac{\zeta_{x}^{2}
+\zeta^{2}\Theta_{x}^{2}}{v\theta^{2}}dxds -C\int_{\frac{t}{2}}^{t}\int\frac{\zeta_{x}^{2}+\zeta^{2}\Theta_{x}^{2}}{v\theta^{2}}dxds\\
&\geq \frac{C}{2},\quad x\in\mathbb{R},t\geq T_{0}.\notag
\end{split}
\end{equation}
Integrating \eqref{vv} over $[k,k+1]$ with respect to $x$ and using \eqref{alpha1}, it shows
\begin{equation}
\begin{split}
\frac{\alpha_{1}v_{-}}{Y_{k}(t)}&\leq \frac{1}{Y_{k}(t)}\int_{k}^{k+1}vdx\\
&\leq C\big( 1+\int_{0}^{t}\frac{1}{Y_{k}(s)}\int_{k}^{k+1}\theta dxds \big)\\
&\leq C\big( 1+\int_{0}^{t}\frac{1}{Y_{k}(s)}ds \big),\label{33}
\end{split}
\end{equation}
which, along with the Gronwall's inequality, yields $Y_{k}(t)\geq C^{-1}(T)$. Then, from \eqref{v}, one obtains 
\begin{gather}
v(x,t)\geq B_{k}(x,t)Y_{k}(t)\geq C^{-1}(T),\quad x\in\mathbb{R}, t\in[0,T],
\end{gather}
which, together with \eqref{33}, completes the proof of Lemma 3.2.
\end{proof}

\vskip 0.3cm
\begin{lemma}\label{24}
There exists a positive constant $C$ depending on $\displaystyle\inf_{x\in \mathbb{R}}v_{0}, \|\phi_{0}\|_{H^{1}},\|\psi_{0}\|_{L^{2}}, \|\psi_{0}\|_{L^{4}},$ $\|\zeta_{0}\|_{L^{2}}$ and $C_{0}$, such that
\begin{equation}
\begin{split}
\sup_{t\in[0,T]}\int\Big(\phi_{x}^{2}+\zeta^{2}+\psi^{4}\Big)dx
+\int_{0}^{T}\int\Big(\big(\psi_{x}^{2}+\phi_{x}^{2}\big)\theta
+\psi^{2}\psi_{x}^{2}+\zeta_{x}^{2}\Big)dxdt\leq C.\label{61}
\end{split}
\end{equation}
\end{lemma}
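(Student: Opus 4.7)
The overall strategy is to upgrade the $L^2$-level coercivity provided by Lemma 3.1 and the pointwise bounds on $v$ from Lemma 3.2 into the $H^1$-scale estimate of \eqref{61}, together with the higher integrability $\psi\in L^\infty_tL^4_x$. A delicate constraint is that $\|\zeta_{0x}\|$ is not allowed to appear in the final constant, so none of the multipliers used may produce $\zeta_{0x}$ in the initial data when evaluated at $t=0$.

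\textbf{Step 1 ($L^4$-control of $\psi$).} I would multiply \eqref{2} by $4\psi^3$ and integrate over $\mathbb{R}$. The viscous term yields the dissipation $12\mu\int\psi^2\psi_x^2/v\,dx$ after integration by parts, while the pressure contribution $-12\int\psi^2\psi_x(p-p_+)\,dx$ is handled by expanding $p-p_+ = R\zeta/v - R\Theta\phi/(vV)$ and applying Young's inequality. The resulting error $\int\psi^2\zeta^2/v$ is bounded using the 1-D Gagliardo-Nirenberg inequality $\|\psi\|_\infty^2\leq 2\|\psi\|\|\psi_x\|$ combined with the $L^2$ bounds on $(\psi,\zeta)$ from Lemma 3.1; the remaining terms involving $U_x,V_x,R_1$ are absorbed via \eqref{41} and the smallness of $\delta$.

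\textbf{Step 2 ($L^2$-control of $\zeta$ and $\theta$-weighted $\psi_x^2$).} Following the authors' outline, I would test the energy equation \eqref{3} against $\zeta_+$ (the positive part of $\zeta$) and simultaneously test the momentum equation \eqref{2} against $2\psi\zeta_+$, then add. On $\{\zeta>0\}$ the time derivatives combine into $\partial_t(\tfrac{c_\nu}{2}\zeta_+^2+\psi^2\zeta_+)$ modulo manageable remainders, and the two viscous dissipations together generate the leading quadratic form $\int(\kappa\zeta_x^2+\mu\theta\psi_x^2)/v\,dx$ (the factor $\theta$ arises precisely because the $\mu u_x^2/v$ source in the energy equation, when weighted by $\zeta_+$, matches with $\Theta \cdot \psi_x^2$ coming from the momentum side). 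The sign-symmetric choice with $-\zeta_-$ and $-2\psi\zeta_-$ recovers the analogous bound on $\{\zeta<0\}$. Cubic nonlinearities such as $\int\psi_x^2\psi\zeta_+$ and $\int\zeta_+\phi U_x^2/v$ are absorbed into the main dissipation by Young's inequality after using Lemma 3.1, \eqref{41}, and the $v$-bound.

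\textbf{Step 3 ($\phi_x^2$ and $\theta$-weighted $\phi_x^2$ control via the effective viscous flux).} Using $\phi_t=\psi_x$ I would write $\mu(\psi_x/v)_x = \mu(\phi_x/v)_t + \mu(\phi_x U_x - \psi_x V_x)/v^2$, recasting \eqref{2} in the form $(\psi - \mu\phi_x/v)_t + (p-p_+)_x = E$, where $E$ depends only on $\phi, V_x, U_x, U_{xx}, R_1$ and is small. Testing against $\psi-\mu\phi_x/v$ and integrating $(p-p_+)_x$ by parts produces, from the decomposition $p_x = R\theta_x/v - R\theta v_x/v^2$, the positive dissipation $\mu R\int\theta\phi_x^2/v^3\,dx$ on the left. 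Expanding $(\psi-\mu\phi_x/v)^2$ and using the uniform bound on $v$ from Lemma 3.2 recovers $\sup_t\int\phi_x^2$ together with $\int_0^T\!\!\int\theta\phi_x^2$, and the initial data contribute only through $\|\psi_0\|$ and $\|\phi_0\|_{H^1}$. Finally, assembling a suitable linear combination of the three differential inequalities and applying Gronwall closes the estimate once $\delta$ is small enough to absorb the $\delta$-proportional terms.

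The main obstacle I anticipate is Step 2. Without an a priori upper bound on $\theta$, the natural test functions (e.g.\ just $\zeta$) would not produce the $\theta$-weight in front of $\psi_x^2$ that is needed to close Steps 1 and 3; the pair $(\zeta_+,\,2\psi\zeta_+)$ is dictated precisely by this matching requirement. Moreover, each nonlinear error term has to be checked term-by-term to ensure it is controlled by the norms listed in \eqref{strong} alone, so that $\|\zeta_{0x}\|$ never enters the constant. The cut-off function $\tilde\sigma$ advertised in the introduction is what ultimately allows one to perform these absorptions cleanly and to confine the dependence of $\delta_0$ to the quantities displayed in \eqref{strong}.
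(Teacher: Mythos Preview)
Your overall architecture (combine a $\psi^3$-multiplier, a $\zeta$-weighted pair, and an effective-viscous-flux identity) matches the paper, but two of your multiplier choices differ from the paper's in ways that actually break the argument.

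\textbf{Step 2.} The paper does \emph{not} test against $\zeta_+$; it tests \eqref{3} against $(\zeta-\Theta)_+$ and \eqref{2} against $2\psi(\zeta-\Theta)_+$. The shift by $-\Theta$ is not cosmetic: it forces the support to lie in $\Omega_2=\{\theta>2\Theta\}$, whose measure is bounded by the basic entropy estimate \eqref{o}. With your choice $\zeta_+$, the support is $\{\zeta>0\}$, which can have infinite measure. After Cauchy on the analogue of $I_1$ you would face a term $\displaystyle\int(\zeta^2+\phi^2+1)\,\zeta_+\,dx$, and the ``$+1$'' contribution $\int\zeta_+\,dx$ is \emph{not} controlled by $\int\Phi(\theta/\Theta)\,dx$ (small positive $\zeta$ on a large set). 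Several of the paper's $I_i$ estimates hinge on $\int_{\Omega_2}(\zeta+1)\,dx\le C$, which fails for your support. The ``sign-symmetric choice with $-\zeta_-$'' is also unnecessary: on $\{\zeta<0\}$ one has $\theta<\Theta$, so the basic estimate \eqref{new} already controls $\theta\psi_x^2$ and $\zeta_x^2$ there.

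\textbf{Step 3.} The paper rewrites \eqref{2} as $(\mu\bar v_x/\bar v-\psi)_t-p_x=R_1$ with $\bar v=v/V$ and multiplies \emph{only} by $\bar v_x/\bar v$ (see \eqref{40}--\eqref{101}). Testing against the full $\psi-\mu\phi_x/v$, as you propose, generates the $\psi$-part $-\int p_x\,\psi\,dx$, which after expanding $p_x=R\theta_x/v-R\theta v_x/v^2$ produces terms like $\int R\zeta_x\psi/v$ and $\int R\theta\phi_x\psi/v^2$. Their time integrals require either $\int_0^T\!\!\int\psi^2$ or $\int_0^T\!\!\int\theta\psi^2$, neither of which is bounded uniformly in $T$ at this stage. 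The paper's test against $\bar v_x/\bar v$ alone avoids these, leaving only $\psi_x^2/v$ on the right, which is split as $\psi_x^2\le C\psi_x^2/\theta+\varepsilon\theta\psi_x^2$ and absorbed.

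Two smaller points: Lemma~3.1 does \emph{not} give an $L^2$ bound on $\zeta$ (only $\int\Theta\Phi(\theta/\Theta)$), so your Step~1 error handling needs the region-splitting used in the paper's treatment of $J_1$; and the cut-off $\tilde\sigma$ plays no role in this lemma---it enters only in Lemma~\ref{67} for the $H^1$ bounds on $\psi,\zeta$.
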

\begin{proof}
First, for $t\geq 0$ and $a>1$, denoting
\begin{gather}
\Omega_{a}(t)\overset{\text{def}}{=}\Big\{x\in\mathbb{R}\Big| \frac{\theta}{\Theta}(x,t)>a \Big\}.\label{omega2}
\end{gather}
Taking note of the following estimation of the measure for $\Omega_{2}$
\begin{equation}
\begin{split}
2|\Omega_{2}|&\leq \sup_{t\in[0,T]}\int_{\Omega_{2}}\frac{\theta}{\Theta}dx\\
&\leq \frac{2}{1-\ln 2}\sup_{t\in[0,T]}\int_{\mathbb{R}}\Phi(\frac{\theta}{\Theta})dx\\
&\leq \frac{6C_{0}}{c_{\nu}\theta_{-}(1-\ln 2)},\label{o}
\end{split}
\end{equation}
combining with \eqref{new}, one has at once that $\Omega_{2}(t)$ is bounded. Next,  multiplying \eqref{3} by $(\zeta-\Theta)_{+}= \max\{ \zeta-\Theta,0 \}$, integrating the resultant over $\mathbb{R}\times[0,t]$, one has
\begin{eqnarray}
&&\frac{c_{\nu}}{2}\int(\zeta-\Theta)_{+}^{2}dx+\kappa\int_{0}^{t}
\int_{\Omega_{2}}\frac{\zeta_{x}^{2}}{v}dxds\notag\\
&&=\frac{c_{\nu}}{2}\int \left( \zeta_{0}(x)-\Theta(x,0) \right)_{+}^{2}dx
-\int_{0}^{t}\int\frac{R\zeta+R\Theta}{v}\psi_{x}(\zeta-\Theta)_{+}dxds\notag\\ 
&&\quad  -\int_{0}^{t}\int\frac{R\zeta-p_{+}\phi}{v}U_{x}(\zeta-\Theta)_{+}dxds+\kappa\int_{0}^{t}\int_{\Omega_{2}}\frac{\zeta_{x}\Theta_{x}}{V}dxds\label{34}\\
&&\quad  -\kappa\int_{0}^{t}\int_{\Omega_{2}}\frac{\phi\Theta_{x}^{2}}{vV}dxds+\mu\int_{0}^{t}\int\frac{\psi_{x}^{2}}{v}(\zeta-\Theta)_{+}dxds  +2\mu\int_{0}^{t}\int\frac{\psi_{x}U_{x}}{v}(\zeta-\Theta)_{+}dxds \notag\\
&&\quad-\mu\int_{0}^{t}\int\frac{\phi U_{x}^{2}}{vV}(\zeta-\Theta)_{+}dxds -\int_{0}^{t}\int R_{2}(\zeta-\Theta)_{+}dxds
-c_{\nu}\int_{0}^{t}\int(\zeta-\Theta)_{+}\partial_{t} \Theta dxds.\notag
\end{eqnarray}
Multiplying \eqref{2} by $2\psi(\zeta-\Theta)_{+},$
and integrating the resultant  over $\mathbb{R}\times[0,t]$, the following integral identity is obtained 
\begin{eqnarray}
&&\int\psi^{2}(\zeta-\Theta)_{+}dx+2\mu \int_{0}^{t}\int\frac{\psi_{x}^{2}}{v}(\zeta-\Theta)_{+}dxds\notag\\
&&=\int \psi_{0}^{2}(x)\big(\zeta_{0}(x)-\Theta_{0}(x)\big)_{+}+2\int_{0}^{t}\int\frac{R\zeta-p_{+}\phi}{v}\psi_{x}(\zeta-\Theta)_{+}dxds\notag\\
&&\quad +2\int_{0}^{t}\int_{\Omega_{2}}\frac{R\zeta-p_{+}\phi}{v}\psi\zeta_{x}dxds -2\int_{0}^{t}\sigma\int_{\Omega_{2}}\frac{R\zeta-p_{+}\phi}{v}\psi\Theta_{x}dxds\label{35}\\
&&\quad +2\mu\int_{0}^{t}\int\frac{\phi U_{x}}{vV}\psi_{x}(\zeta-\Theta)_{+}dxds -2\mu \int_{0}^{t}\int_{\Omega_{2}}\frac{\psi\psi_{x}\zeta_{x}}{v}dxds +2\mu \int_{0}^{t}\int_{\Omega_{2}}\frac{\phi\psi U_{x}}{vV}dxds\notag\\
&&\quad+2\mu \int_{0}^{t}\int_{\Omega_{2}}\frac{\psi\psi_{x}\Theta_{x}}{v}dxds -2\mu \int_{0}^{t}\int_{\Omega_{2}}\frac{\phi\psi}{vV}U_{x}\Theta_{x}dxds
-2\int_{0}^{t}\int \psi R_{1}(\zeta-\Theta)_{+}dxds\notag\\
&&\quad  +\int_{0}^{t}\int_{\Omega_{2}}\psi^{2}\partial_{t}\zeta dxds-\int_{0}^{t}\int_{\Omega_{2}}\psi^{2}\partial_{t}\Theta dxds.\notag
\end{eqnarray}
Adding \eqref{35} into \eqref{34},  it shows that
\begin{eqnarray}
&& \int\big( \frac{c_{\nu}}{2}(\zeta-\Theta)_{+}^{2}+\psi^{2}(\zeta-\Theta)_{+} \big)dx
+\int_{0}^{t}\int_{\Omega_{2}}\big(\frac{\mu\psi_{x}^{2}}{v}(\zeta-\Theta)_{+}
+\frac{\kappa\zeta_{x}^{2}}{v}\big)dxds\notag\\
&&=\int \big( \frac{c_{\nu}}{2}(\zeta_{0}(x)-\Theta(x,0))_{+}^{2}
+\psi_{0}^{2}(x)(\zeta_{0}(x)-\Theta(x,0))_{+} \big)dx\notag\\
&& \ +\int_{0}^{t}\int \frac{R\zeta-2p_{+}\phi-R\Theta}{v}\psi_{x}(\zeta-\Theta)_{+}dxds
-\int_{0}^{t}\int\frac{R\zeta-p_{+}\phi}{v}U_{x}(\zeta-\Theta)_{+}dxds\notag\\
&& \ +\kappa\int_{0}^{t}\int_{\Omega_{2}}\frac{\zeta_{x}\Theta_{x}}{V}dxds
-\kappa\int_{0}^{t}\int_{\Omega_{2}}\frac{\phi\Theta_{x}^{2}}{vV}dxds
+2\mu\int_{0}^{t}\int\frac{\psi_{x}U_{x}}{V}(\zeta-\Theta)_{+}dxds\notag\\
&& \ -\mu\int_{0}^{t}\int\frac{\phi U_{x}^{2}}{vV}(\zeta-\Theta)_{+}dxds+2\int_{0}^{t}\int_{\Omega_{2}}\frac{R\zeta-p_{+}\phi}{v}\psi\zeta_{x}dxds\label{91}\\
&& \ -2\int_{0}^{t}\int_{\Omega_{2}}\frac{R\zeta-p_{+}\phi}{v}\psi\Theta_{x}dxds
-2\mu\int_{0}^{t}\int_{\Omega_{2}}\frac{\psi\psi_{x}\zeta_{x}}{v}dxds
+2\mu \int_{0}^{t}\int_{\Omega_{2}}\frac{\phi\psi U_{x}}{vV}\zeta_{x}dxds\notag\\
&& \ +2\mu \int_{0}^{t}\int_{\Omega_{2}}\frac{\psi\psi_{x}\Theta_{x}}{v}dxds
-2\mu \int_{0}^{t}\int_{\Omega_{2}}\frac{\phi\psi}{vV}U_{x}\Theta_{x}dxds
-2\int_{0}^{t}\int\psi R_{1}(\zeta-\Theta)_{+}dxds\notag\\
&& \ -\int_{0}^{t}\int R_{2}(\zeta-\Theta)_{+}dxds
-c_{\nu}\int_{0}^{t}\int\partial_{t}\Theta(\zeta-\Theta)_{+}dxds
-\int_{0}^{t}\int_{\Omega_{2}}\psi^{2}\partial_{t}\Theta dxds\notag\\
&& \ -\frac{1}{c_{\nu}}\int_{0}^{t}\int_{\Omega_{2}}\psi^{2}
\big( \frac{R\zeta+R\Theta}{v}\psi_{x}+\frac{R\zeta-p_{+}\phi}{v}U_{x} \big)dxds
-\frac{1}{c_{\nu}}\int_{0}^{t}\int_{\Omega_{2}}\psi^{2}R_{2}dxds\notag\\
&& \ +\frac{\mu}{c_{\nu}}\int_{0}^{t}\int_{\Omega_{2}}\psi^{2}
\big( \frac{\psi_{x}^{2}+2\psi_{x}U_{x}}{v}-\frac{\phi U_{x}^{2}}{vV} \big)dxds
+\frac{\kappa}{c_{\nu}}\int_{0}^{t}\int_{\Omega_{2}}\psi^{2}
\big( \frac{\theta_{x}}{v}-\frac{\Theta_{x}}{V} \big)_{x}dxds\notag\\
&&=\int \big( \frac{c_{\nu}}{2}(\zeta_{0}(x)-\Theta_{0}(x))_{+}^{2}
+\psi_{0}^{2}(x)(\zeta_{0}(x)-\Theta_{0}(x))_{+} \big)dx+\sum_{i=1}^{20}I_{i}.\notag
\end{eqnarray}
Now the corresponding estimates are given below for $I_{i}(1\leq i\leq 20)$. First, by \eqref{new} and \eqref{v}, one derives that
\begin{equation}
\begin{split}
|I_{1}|&=\Big|\int_{0}^{t}\int \frac {R \zeta - 2p_{+} \phi -R \Theta} {v}
\psi_{x} (\zeta-\Theta)_{+} dx ds \Big| \\
&\leq \frac{\mu}{4}\int_{0}^{t}\int\frac{\psi_{x}^{2}}{v}(\zeta-\Theta)_{+}dxds
+C\int_{0}^{t}\int(\zeta^{2}+\phi^{2}+1)(\zeta-\Theta)_{+}dxds\\
&\leq \frac{\mu}{4}\int_{0}^{t}\int\frac{\psi_{x}^{2}}{v}(\zeta-\Theta)_{+}dxds
+C\int_{0}^{t}\int(\zeta^{2}+1)(\zeta-\Theta)_{+}dxds\\
&\leq \frac{\mu}{4}\int_{0}^{t}\int\frac{\psi_{x}^{2}}{v}(\zeta-\Theta)_{+}dxds
+C\int_{0}^{t}\int(\zeta+1)\Big( \zeta-\frac{1}{2}\Theta \Big)_{+}^{2}dxds\\
&\leq \frac{\mu}{4}\int_{0}^{t}\int\frac{\psi_{x}^{2}}{v}(\zeta-\Theta)_{+}dxds
+C\int_{0}^{t}\max_{x\in\mathbb{R}}\Big( \zeta-\frac{1}{2}\Theta \Big)_{+}^{2}
\int_{\{ \zeta>\Theta/2 \}}(\zeta+1)dxds\\
&\leq \frac{\mu}{4}\int_{0}^{t}\int\frac{\psi_{x}^{2}}{v}(\zeta-\Theta)_{+}dxds
+C\int_{0}^{t}\max_{x\in\mathbb{R}}\Big( \zeta-\frac{1}{2}\Theta \Big)_{+}^{2}ds.\label{37}
\end{split}
\end{equation}
Next, duo to \eqref{new} and Cauchy's inequality, it yields that
\begin{equation}
\begin{split}
&|I_{2}|+|I_{3}|+|I_{4}|\\
&\leq C\int_{0}^{t}\int \big( \frac{\zeta_{x}^{2}}{v\theta^{2}}+\frac{\phi^{2}
\Theta_{x}^{2}}{v\theta^{2}} \big)dxds
+C\int_{0}^{t}\int_{\Omega_{2}}\theta^{2}\Theta_{x}^{2}dxds+C\int_{0}^{t}\int_{\Omega_{2}}(\zeta+1)(\zeta-\Theta)_{+}dxds\\
&\leq C\int_{0}^{t}\max_{x\in\mathbb{R}}(\zeta-\frac{1}{2}\Theta)_{+}^{2}ds+C.\notag
\end{split}
\end{equation}
Similarly, one gets
\begin{equation}
\begin{split}
&|I_{5}|+|I_{6}|+|I_{7}|\\
&\leq \frac{\mu}{4}\int_{0}^{t}\int\frac{\psi_{x}^{2}}{v}(\zeta-\Theta)_{+}dxds
+\frac{\kappa}{8}\int_{0}^{t}\int_{\Omega_{2}}\frac{\zeta_{x}^{2}}{v}dxds\\
&\quad  +C\int_{0}^{t}\int U_{x}^{2}(\zeta-\Theta)_{+}dxds+C\int_{0}^{t}\int_{\Omega_{2}}(\zeta+1)^{2}\psi^{2}dxds\\
&\leq \frac{\mu}{4}\int_{0}^{t}\int\frac{\psi_{x}^{2}}{v}(\zeta-\Theta)_{+}dxds
+\frac{\kappa}{8}\int_{0}^{t}\int_{\Omega_{2}}\frac{\zeta_{x}^{2}}{v}dxds\\
&\qquad+C\int_{0}^{t}\max_{x\in\mathbb{R}}(\zeta-\frac{\Theta}{2})_{+}^{2}ds
+C\int_{0}^{t}\int U_{x}^{4}dxds\\
&\leq \frac{\mu}{4}\int_{0}^{t}\int\frac{\psi_{x}^{2}}{v}(\zeta-\Theta)_{+}dxds
+\frac{\kappa}{8}\int_{0}^{t}\int_{\Omega_{2}}\frac{\zeta_{x}^{2}}{v}dxds+C\int_{0}^{t}\max_{x\in\mathbb{R}}(\zeta-\frac{\Theta}{2})_{+}^{2}ds+C.\notag
\end{split}
\end{equation}
With \eqref{25}, \eqref{v}, \eqref{41}, and Cauchy's inequality, it holds that
\begin{equation}
\begin{split}
&|I_{8}|+|I_{9}|+|I_{11}|\\
&\leq C\int_{0}^{t}\int_{\Omega_{2}}(\zeta+1)|\psi||\Theta_{x}|dxds
+C\int_{0}^{t}\int_{\Omega_{2}}|\psi||\psi_{x}||\zeta_{x}|dxds\\
&\quad +C\int_{0}^{t}\int_{\Omega_{2}}\frac{\psi_{x}^{2}}{v\theta}dxds
+C\int_{0}^{t}\int_{\Omega_{2}}\theta\psi^{2}\Theta_{x}^{2}dxds\\
&\leq \frac{\kappa}{8}\int_{0}^{t}\int_{\Omega_{2}}\frac{\zeta_{x}^{2}}{v}dxds
+C\int_{0}^{t}\max_{x\in\mathbb{R}}\big(\zeta-\frac{\Theta}{2}\big)_{+}^{2}
\int_{\Omega_{2}}(\psi^{2}+\Theta_{x}^{2})dxds +C\int_{0}^{t}\int\psi^{2}\psi_{x}^{2}dxds+C\\
&\leq \frac{\kappa}{8}\int_{0}^{t}\int_{\Omega_{2}}\frac{\zeta_{x}^{2}}{v}dxds
+C\int_{0}^{t}\max_{x\in\mathbb{R}}\big(\zeta-\frac{\Theta}{2}\big)_{+}^{2}ds
+C\int_{0}^{t}\int\psi^{2}\psi_{x}^{2}dxds+C.\notag
\end{split}
\end{equation}
Obviously, a direct calculation gives
\begin{equation}
\begin{split}
&|I_{10}|+|I_{12}|+|I_{13}|+|I_{14}|+|I_{15}|\\
&\leq \frac{\kappa}{8}\int_{0}^{t}\int_{\Omega_{2}}\frac{\zeta_{x}^{2}}{v}dxds
+C\int_{0}^{t}\int\psi^{2}U_{x}^{2}dxds
+C\int_{0}^{t}\int_{\Omega_{2}}(\psi^{2}+1)|U_{x}||\Theta_{x}|dxds\\
&\quad +C\int_{0}^{t}\max_{x\in\mathbb{R}}(\zeta-\frac{\Theta}{2})_{+}^{2}ds
+C\int_{0}^{t}\int_{\Omega_{2}}\left( \psi^{2}R_{1}^{2}+R_{2}^{2}+U_{x}^{2} \right)dxds\\
&\leq \frac{\kappa}{8}\int_{0}^{t}\int_{\Omega_{2}}\frac{\zeta_{x}^{2}}{v}dxds
+C\int_{0}^{t}\max_{x\in\mathbb{R}}(\zeta-\frac{\Theta}{2})_{+}^{2}ds
+C\int_{0}^{t}\frac{1}{(1+s)^{3/2}}\big(\int\psi^{2}dx+1\big)ds\\
&\leq \frac{\kappa}{8}\int_{0}^{t}\int_{\Omega_{2}}\frac{\zeta_{x}^{2}}{v}dxds
+C\int_{0}^{t}\max_{x\in\mathbb{R}}(\zeta-\frac{\Theta}{2})_{+}^{2}ds+C,\notag
\end{split}
\end{equation}
and
\begin{equation}
\begin{split}
&|I_{16}|+|I_{17}|+|I_{18}|+|I_{19}|\\
&\leq C\int_{0}^{t}\int\psi^{2}\psi_{x}^{2}dxds+C\int_{0}^{t}\int\psi^{2}|U_{x}|dxds
+C\int_{0}^{t}\frac{1}{(1+s)^{2}}\int\psi^{2}dxds\\
&\quad +C\int_{0}^{t}\int_{\Omega_{2}}\psi^{2}(\zeta^{2}+1)dxds\\
&\leq C\int_{0}^{t}\int\psi^{2}\psi_{x}^{2}dxds+C\int_{0}^{t}\max_{x\in\mathbb{R}}\psi^{4}ds
+C\int_{0}^{t}\max_{x\in\mathbb{R}}(\zeta-\frac{\Theta}{2})_{+}^{2}ds+C.\notag
\end{split}
\end{equation}
Finally,  In order to get the estimate  of the last term of \eqref{91}, the following cut-off function is constructed
\begin{equation}\label{eta}
\displaystyle\varphi_{\eta}(z)=\left\{ \begin{array}{l}
1,\quad z>\eta,\\ 
\displaystyle \frac{z}\eta, \quad 0<z\leq\eta,\\
\displaystyle 0,\quad z\leq 0,
       \end{array}\right.
\end{equation}
pluging $\varphi_{\eta}(z)$ into $I_{20}$ in limit form, 
\begin{eqnarray}
I_{20}&=&\frac{\kappa}{c_{\nu}}\int_{0}^{t}\int_{\Omega_{2}}\psi^{2}
\left( \frac{\theta_{x}}{v}-\frac{\Theta_{x}}{V} \right)_{x}dxds\notag\\
&=&\frac{\kappa}{c_{\nu}}\lim_{\eta\rightarrow 0}\int_{0}^{t}\int\varphi_{\eta}(\zeta-\Theta)\psi^{2}\big( \frac{\zeta_{x}
-\Theta_{x}}{v} \big)_{x}dxds+\frac{\kappa}{c_{\nu}}\int_{0}^{t}\int_{\Omega_{2}}\psi^{2}\big( \frac{2\Theta_{x}}{v}
-\frac{\Theta_{x}}{V} \big)_{x}dxds\notag\\
&\overset{\text{def}}{=}& I_{20}^{1}+I_{20}^{2},\label{I20}
\end{eqnarray}
where, for the first term on the right hand side of \eqref{I20}, Lebesgue's dominated convergence theorem shows that
\begin{equation}
\begin{split}
I_{20}^{1}&=-\frac{2\kappa}{c_{\nu}}\lim_{\eta\rightarrow 0}\int_{0}^{t}\int\varphi_{\eta}(\zeta-\Theta)\psi\psi_{x}\frac{\zeta_{x}-\Theta_{x}}{v}dxds\\
&\qquad\qquad-\frac{\kappa}{c_{\nu}}\lim_{\eta\rightarrow 0}\int_{0}^{t}\int\varphi_{\eta}^{'}(\zeta-\Theta)\frac{\psi^{2}(\zeta_{x}-\Theta_{x})^{2}}{v}dxds\\
&\leq C\int_{0}^{t}\int_{\Omega_{2}}\frac{|\psi\psi_{x}\zeta_{x}|+|\psi\psi_{x}\Theta_{x}|}{v}dxds\\
&\leq \frac{\kappa}{8}\int_{0}^{t}\int_{\Omega_{2}}\frac{\zeta_{x}^{2}}{v}dxds
+C\int_{0}^{t}\int\psi^{2}\psi_{x}^{2}dxds+C\int_{0}^{t}\int\frac{\psi_{x}^{2}}{v\theta}dxds +C\int_{0}^{t}\int_{\Omega_{2}}\theta\psi^{2}\Theta_{x}^{2}dxds\\
&\leq \frac{\kappa}{8}\int_{0}^{t}\int_{\Omega_{2}}\frac{\zeta_{x}^{2}}{v}dxds
+C\int_{0}^{t}\int\psi^{2}\psi_{x}^{2}dxds
+C\int_{0}^{t}\max_{x\in\mathbb{R}}(\zeta-\frac{\Theta}{2})_{+}^{2}ds+C,\notag
\end{split}
\end{equation}
and in the second inequality we have used $\varphi_{\eta}\in[0,1]$ and $0\leq\varphi_{\eta}^{'}$. Similarly, for the $I_{20}^2$, one has
\begin{equation}
\begin{split}
I_{20}^{2}&=\frac{\kappa}{c_{\nu}}\int_{0}^{t}\int_{\Omega_{2}}\psi^{2}\big( \frac{2\Theta_{xx}}{v}
-\frac{\Theta_{xx}}{V}-\frac{2\Theta_{x}v_{x}}{v^{2}}+\frac{\Theta_{x}V_{x}}{V^{2}} \big)dxds\\
&\leq C\int_{0}^{t}\int_{\Omega_{2}}\psi^{2}\left( |\Theta_{xx}|+\Theta_{x}^{2} +|\phi_{x}||\Theta_{x}|\right)dxds\\
&\leq C\int_{0}^{t}\max_{x\in\mathbb{R}}\psi^{4}ds
+C\int_{0}^{t}\int\frac{\theta\phi_{x}^{2}}{v^{3}}dxds
+C.\label{36}
\end{split}
\end{equation}
Furthermore, noting that by \eqref{new} and Lemma 3.2, one derives
\begin{equation}
\begin{split}
&\int_{0}^{t}\int\left( \theta\psi_{x}^{2}+\zeta_{x}^{2} \right)dxds\\
&\leq \int_{0}^{t}\int_{\{\zeta>2\Theta\}}(\theta\psi_{x}^{2}+\zeta_{x}^{2})dxds+\int_{0}^{t}\int_{\{\zeta\leq 2\Theta\}}(\theta\psi_{x}^{2}+\zeta_{x}^{2})dxds\\
&\leq \int_{0}^{t}\int_{\{\zeta>2\Theta\}}\big( \frac{3}{2}\psi_{x}^{2}\zeta
+\frac{\zeta_{x}^{2}}{v}v \big)dxds
+\int_{0}^{t}\int_{\{\zeta\leq 2\Theta\}}\big( \frac{\psi_{x}^{2}}{\theta}
+\frac{\zeta_{x}^{2}}{\theta^{2}} \big)\theta^{2}dxds\\
&\leq C\int_{0}^{t}\int_{\{\zeta>2\Theta\}}\big( \psi_{x}^{2}\zeta+\frac{\zeta_{x}^{2}}{v} \big)dxds
+C\int_{0}^{t}\int_{\{\zeta\leq 2\Theta\}}\big( \frac{\psi_{x}^{2}}{\theta}+\frac{\zeta_{x}^{2}}{\theta^{2}} \big)dxds\\
&\leq C\int_{0}^{t}\int_{\Omega_{2}}\big( \frac{\psi_{x}^{2}}{v}(\zeta-\Theta)_{+}+\frac{\zeta_{x}^{2}}{v} \big)dxds+C.\label{38}
\end{split}
\end{equation}
Substituting the estimates \eqref{37}-\eqref{36} into \eqref{91}, and by using \eqref{38}, it yields that
\begin{equation}
\begin{split}
&\int(\zeta-\Theta)_{+}^{2}dx+\int_{0}^{t}\int\left( \theta\psi_{x}^{2}+\zeta_{x}^{2} \right)dxds\\
&\leq C\int_{0}^{t}\Big( \max_{x\in\mathbb{R}}\big(\zeta-\frac{\theta}{2}\big)_{+}^{2}+\max_{x\in\mathbb{R}}\psi^{4} \Big)ds\\
&\qquad\qquad+C\int_{0}^{t}\int\psi^{2}\psi_{x}^{2}dxds +C\int_{0}^{t}\int\frac{\theta\phi_{x}^{2}}{v^{3}}dxds+C.\label{aa}
\end{split}
\end{equation}
Now, rewriting \eqref{2} as following 
\begin{gather}
\left( \mu\frac{\bar{v}_{x}}{\bar{v}}-\psi \right)_{t}-p_{x}=R_{1},\label{40}
\end{gather}
where $\bar{v}=\frac vV$, multiplying \eqref{40} by $\frac{\bar{v}_{x}}{\bar{v}}$, one gets
\begin{equation}
\begin{split}
&\Big( \frac{\mu}{2}\big(\frac{\bar{v}_{x}}{\bar{v}}\big)^{2}-\psi\frac{\bar{v}_{x}}{\bar{v}} \Big)_{t}
+\big( \psi\frac{\bar{v}_{t}}{\bar{v}} \big)_{x}+\frac{ R\theta}{v}\big( \frac{\bar{v}_{x}}{\bar{v}} \big)^{2}\\
&=\frac{ R\zeta_{x}}{v}\frac{\bar{v}_{x}}{\bar{v}}-\frac{ R\theta\Theta_{x}}{v}\big( \frac{1}{\Theta}-\frac{1}{\theta} \big)\frac{\bar{v}_{x}}{\bar{v}}
+\frac{\psi_{x}^{2}}{v}+\psi_{x}U_{x}\big( \frac{1}{v}-\frac{1}{V} \big)+ R_{1}\frac{\bar{v}_{x}}{\bar{v}}\\
&\leq \frac{ R\theta}{2v}\big( \frac{\bar{v}_{x}}{\bar{v}} \big)^{2}+ C\big( \frac{\zeta_{x}^{2}}{\theta}+\psi_{x}^{2}+\frac{\zeta^{2}\Theta_{x}^{2}}{v\theta}+
U_{x}^{2}\phi^{2} \big)
 + R_{1}\frac{\bar{v}_{x}}{\bar{v}}.\label{92}\\
\end{split}
\end{equation}
Considering the following inequality which can be obtained by Cauchy's inequality and a direct calculation
\begin{gather}
\frac{\phi_{x}^{2}}{2v^{2}}-C\phi^{2}\Theta_{x}^{2}\leq \big( \frac{\bar{v}_{x}}{\bar{v}} \big)^{2}\leq
\frac{\phi_{x}^{2}}{v^{2}}+C\phi^{2}\Theta_{x}^{2},\label{93}
\end{gather}
integrating \eqref{92} over $\mathbb{R}\times(0,t)$, one has
\begin{eqnarray}
&&\int\phi_{x}^{2}dx+\int_{0}^{t}\int\frac{\theta\phi_{x}^{2}}{v^{3}}dxds\notag\\
&&\leq C\int_{0}^{t}\int\big( \frac{\zeta_{x}^{2}}{\theta}+\psi_{x}^{2} \big)dxds+
C\int_{0}^{t}\int\big(\frac{\zeta^{2}\Theta_{x}^{2}}{v\theta}+\theta\phi^{2}\Theta_{x}^{2}\big)dxds\notag\\
&&\quad +C\int_{0}^{t}\big(\int R_{1}^{2}dx\big)^{\frac{1}{2}}\big(\int(\phi_{x}^{2}+\phi^{2}\Theta_{x}^{2})dx \big)^{\frac{1}{2}}ds+C\notag\\
&&\leq C\int_{0}^{t}\int\big( \frac{\zeta_{x}^{2}}{\theta}+\psi_{x}^{2} \big)dxds+
C\int_{0}^{t}\int\big(\frac{\zeta^{2}\Theta_{x}^{2}}{v\theta}+\theta\phi^{2}\Theta_{x}^{2}\big)dxds\notag\\
&&\quad +C\int_{0}^{t}(1+s)^{-\frac{5}{4}}\Big(\int\big( \phi_{x}^{2}+\phi^{2}\Theta_{x}^{2}\big )dx \Big)ds+C \label{101}\\
&&\leq C\int_{0}^{t}\int\Big( \frac{\zeta_{x}^{2}}{\theta^{2}}+\frac{\psi_{x}^{2}}{\theta} \Big)dxds
+\varepsilon\int_{0}^{t}\int(\zeta_{x}^{2}+\theta\psi_{x}^{2})dxds +C\int_{0}^{t}\int_{\{\zeta>\Theta\}}\theta\Theta_{x}^{2}dxds\notag\\
&&\quad+
C\int_{0}^{t}\int_{\{ \zeta\leq\Theta \}}\frac{\phi^{2}\Theta_{x}^{2}}{v\theta^{2}}dxds +C\int_{0}^{t}\int_{\{ \zeta>\Theta \}}\frac{\zeta^{2}\Theta_{x}^{2}}{\theta}dxds
+C\int_{0}^{t}\int_{\{ \zeta\leq\Theta \}}\frac{\zeta^{2}\Theta_{x}^{2}}{\theta^{2}}dxds\notag\\
&&\quad +C\int_{0}^{t}(1+s)^{-\frac{5}{4}}\int\phi_{x}^{2}dxds+C\notag\\
&&\leq C\int_{0}^{t}\max_{x\in\mathbb{R}}\big( \zeta-\frac{\Theta}{2} \big)_{+}^{2}ds+\varepsilon\int_{0}^{t}\int(\zeta_{x}^{2}+\theta\psi_{x}^{2})dxds +C\int_{0}^{t}(1+s)^{-\frac{5}{4}}\int\phi_{x}^{2}dxds+C,\notag
\end{eqnarray}
which along with \eqref{aa} and  Gronwall's inequality, one obtains
\begin{equation}
\begin{split}
&\int\big(\phi_{x}^{2}+(\zeta-\Theta)_{+}^{2}\big)dx
+\int_{0}^{t}\int\big( \theta(\psi_{x}^{2}+\phi_{x}^{2})+\zeta_{x}^{2} \big)dxds\\
&\leq C\int_{0}^{t}\big( \max_{x\in\mathbb{R}}(\zeta-\frac{\Theta}{2})_{+}^{2}
+\max_{x\in\mathbb{R}}\psi^{4} \big)ds
+C\int_{0}^{t}\int\psi^{2}\psi_{x}^{2}dxds+C.\label{45}
\end{split}
\end{equation}
In order to get the estimate of the second  integral term on the right hand side of \eqref{45},  multiplying \eqref{2} by $\psi^{3}$, and integrating the resultant over $\mathbb{R}\times[0,t]$, one derives
\begin{equation}
\begin{split}
&\frac{1}{4}\int\psi^{4}dx+3\mu\int_{0}^{t}\int\frac{\psi^{2}\psi_{x}^{2}}{v}dxds\\
&=\frac{1}{4}\int\psi_{0}^{4}dx+3R\int_{0}^{t}\int\frac{\zeta\psi^{2}\psi_{x}}{v}dxds
-3p_{+}\int_{0}^{t}\int\frac{\phi\psi^{2}\psi_{x}}{v}dxds\\
&\quad +3\mu\int_{0}^{t}\int\frac{\phi U_{x}}{vV}\psi^{2}\psi_{x}dxds-\int_{0}^{t}\int R_{1}\psi^{3}dxds\\
&\overset{\text{def}}{=} \frac{1}{4}\int\psi_{0}^{4}dx+\sum_{i=1}^{4}J_{i}.\label{44}
\end{split}
\end{equation}
Due to \eqref{new} and \eqref{v}, one has
\begin{eqnarray}
|J_{1}|&=&3R\int_{0}^{t}\int_{\{ \zeta>\Theta \}}\frac{\zeta\psi^{2}\psi_{x}}{v}dxds
+3R\int_{0}^{t}\int_{\{ \zeta\leq\Theta \}}\frac{\zeta\psi^{2}\psi_{x}}{v}dxds\notag\\
&\leq& \mu\int_{0}^{t}\int_{\{ \zeta>\Theta \}}\frac{\psi^{2}\psi_{x}^{2}}{v}dxds
+C\int_{0}^{t}\int_{\{ \zeta>\Theta \}}\zeta^{2}\psi^{2}dxds\notag\\
&&\quad+\int_{0}^{t}\int_{\{ \zeta\leq\Theta \}}\psi_{x}^{2}dxds
+C\int_{0}^{t}\int_{\{ \zeta\leq\Theta \}}\zeta^{2}\psi^{4}dxds\label{J1}\\
&\leq& \mu\int_{0}^{t}\int\frac{\psi^{2}\psi_{x}^{2}}{v}dxds
+C\int_{0}^{t}\max_{x\in\mathbb{R}}\big( \zeta-\frac{\Theta}{2} \big)_{+}^{2}ds +C\int_{0}^{t}\int\frac{\psi_{x}^{2}}{\theta}dxds\notag\\
&&\qquad\qquad\qquad
+C\int_{0}^{t}\max_{x\in\mathbb{R}}\psi^{4}ds\notag\\
&\leq& \mu\int_{0}^{t}\int\frac{\psi^{2}\psi_{x}^{2}}{v}dxds
+C\int_{0}^{t}\Big( \max_{x\in\mathbb{R}}\big( \zeta-\frac{\Theta}{2} \big)_{+}^{2}
+\max_{x\in\mathbb{R}}\psi^{4} \Big)ds+C.\notag
\end{eqnarray}
By using \eqref{new} and \eqref{v} again, it leads to 
\begin{equation}
\begin{split}
|J_{2}|&\leq \varepsilon\int_{0}^{t}\int\psi_{x}^{2}dxds+C\int_{0}^{t}\int\phi^{2}\psi^{4}dxds\\
&\leq \varepsilon\int_{0}^{t}\int\theta\psi_{x}^{2}dxds+C\int_{0}^{t}\max_{x\in\mathbb{R}}\psi^{4}ds+C.
\end{split}
\end{equation}
Combining \eqref{new} and  \eqref{v} with \eqref{41}, one gets
\begin{equation}
\begin{split}
|J_{3}|&\leq \mu\int_{0}^{t}\int\frac{\psi^{2}\psi_{x}^{2}}{v}dxds
+C\int_{0}^{t}\int\phi^{2}\psi^{2}U_{x}^{2}dxds\\
&\leq \mu\int_{0}^{t}\int\frac{\psi^{2}\psi_{x}^{2}}{v}dxds+C\int_{0}^{t}\int U_{x}^{2}dxds
+C\int_{0}^{t}\int\psi^{4}U_{x}^{2}dxds\\
&\leq \mu\int_{0}^{t}\int\frac{\psi^{2}\psi_{x}^{2}}{v}dxds+C\int_{0}^{t}\max_{x\in\mathbb{R}}\psi^{4}ds
+C.
\end{split}
\end{equation}
Similarly,from \eqref{new}, one obtains
\begin{equation}
\begin{split}
|J_{4}|&\leq\int_{0}^{t}\int\psi^{6}dxds+\int_{0}^{t}\int R_{1}^{2}dxds\leq C\int_{0}^{t}\max_{x\in\mathbb{R}}\psi^{4}ds+C.\label{J4}
\end{split}
\end{equation}
Putting the estimates \eqref{J1}-\eqref{J4} into \eqref{44},  one has
\begin{equation}\label{phi^4}
\begin{split}
&\frac{1}{4}\int\psi^{4}dx+\mu\int_{0}^{t}\int\frac{\psi^{2}\psi_{x}^{2}}{v}dxds\\
&\leq C\int_{0}^{t}\Big( \max_{x\in\mathbb{R}}\big( \zeta-\frac{\Theta}{2} \big)_{+}^{2}+\max_{x\in\mathbb{R}}\psi^{4} \Big)ds
+\varepsilon\int_{0}^{t}\int\theta\psi_{x}^{2}dxds+C,
\end{split}
\end{equation}
together \eqref{phi^4} with \eqref{45} and choosing $\varepsilon$ suitably small, one derives
\begin{equation}
\begin{split}
&\int\left(\phi_{x}^{2}+(\zeta-\Theta)_{+}^{2}+\psi^{4}\right)dx
+\int_{0}^{t}\int\Big(\big(\psi_{x}^{2}+\phi_{x}^{2}\big)\theta
+\psi^{2}\psi_{x}^{2}+\zeta_{x}^{2}\Big)dxds\\
&\leq C\int_{0}^{t}\Big( \max_{x\in\mathbb{R}}\big( \zeta-\frac{\Theta}{2} \big)_{+}^{2}+\max_{x\in\mathbb{R}}\psi^{4} \Big)ds+C,\label{751}
\end{split}
\end{equation}
at this point, all that remains is to give the estimate of the first terms on the right hand side of \eqref{751}. In fact, by using standard calculations, for  sufficiently small $\varepsilon>0$, it yields that
\begin{equation}
\begin{split}
&\big( \zeta-\frac{\Theta}{2} \big)_{+}^{2}=2\int_{-\infty}^{x}\big( \zeta-\frac{\Theta}{2} \big)_{+}
\big( \zeta_{x}-\frac{\Theta_{x}}{2} \big)dx\\
&\leq \int\big( \zeta-\frac{\Theta}{2} \big)_{+}\big( |\zeta_{x}|+|\Theta_{x}| \big)dx\\
&\leq\varepsilon\int\big( \zeta-\frac{\Theta}{2} \big)_{+}^{2}\theta dx+\frac{C}{\varepsilon}
\int_{\{ \zeta>\Theta/2 \}}
\big( \frac{\zeta_{x}^{2}}{\theta}+\frac{\Theta_{x}^{2}}{\theta} \big)dx\\
&\leq 3\varepsilon\int\big( \zeta-\frac{\Theta}{2} \big)_{+}^{2}\zeta dx+\frac{C}{\varepsilon}\int\frac{\zeta_{x}^{2}}{\theta}dx+\frac{C}{\varepsilon}\int_{\{\Theta/2<\zeta\leq\Theta\}}\frac{\zeta^{2}\Theta_{x}^{2}}{\theta^{2}}dx
+\frac{C}{\varepsilon}\int_{\{ \zeta>\Theta \}}\zeta\Theta_{x}^{2}dx\\
&\leq 3\varepsilon\max_{x\in\mathbb{R}}\big( \zeta-\frac{\Theta}{2} \big)_{+}^{2}
\int_{\{ \zeta>\Theta/2 \}}\zeta dx
+\frac{C}{\varepsilon}\int\frac{\zeta_{x}^{2}}{\theta}dx\\
&\quad  \quad+\frac{C}{\varepsilon}\int\frac{\zeta^{2}\Theta_{x}^{2}}{\theta^{2}}dx
+\frac{C}{\varepsilon}\max_{x\in\mathbb{R}}\big( \zeta-\frac{\Theta}{2} \big)_{+}\int_{\{\zeta>\Theta\}}\Theta_{x}^{2}dx\\
&\leq \varepsilon C\max_{x\in\mathbb{R}}\big( \zeta-\frac{\Theta}{2} \big)_{+}^{2}+\frac{C}{\varepsilon}\int\frac{\zeta_{x}^{2}}{\theta}dx +\frac{C}{\varepsilon}\int\frac{\zeta^{2}\Theta_{x}^{2}}{\theta^{2}}dx
+\frac{C}{\varepsilon^{3}}\big(\int_{\{\zeta>\Theta\}}\Theta_{x}^{2}dx\big)^{2},\notag
\end{split}
\end{equation}
which leads to
\begin{gather}
\max_{x\in\mathbb{R}}\big( \zeta-\frac{\Theta}{2} \big)_{+}^{2}\leq C\int\frac{\zeta_{x}^{2}}{\theta}dx
+C\int\frac{\zeta^{2}\Theta_{x}^{2}}{\theta^{2}}dx+C\int\Theta_{x}^{4}dx.\label{46}
\end{gather}
For last term, a direct calculation gives
\begin{equation}
\begin{split}
\psi^{4}&=4\int_{-\infty}^{x}\psi^{3}\psi_{x}dx\leq 4\int_{\{ \zeta>\Theta \}}|\psi|^{3}|\psi_{x}|dx
+4\int_{\{ \zeta\leq\Theta \}}|\psi|^{3}|\psi^{x}|dx\\
&\leq \varepsilon\int_{\{\zeta>\Theta\}}|\psi|^{5}\theta^{\frac{1}{2}}dx+\frac{C}{\varepsilon}
\int_{\{ \zeta>\Theta \}}\frac{\psi_{x}^{2}|\psi|}{\theta^{\frac{1}{2}}}dx +\varepsilon\int_{\{\zeta\leq\Theta\}}\psi^{6}\theta dx+\frac{C}{\varepsilon}\int_{\{\zeta\leq\Theta\}}\frac{\psi_{x}^{2}}{\theta}dx\\
&\leq \varepsilon\max_{x\in\mathbb{R}}\psi^{4}\int_{\{ \zeta>\Theta \}}(\psi^{2}+\theta)dx
+C\varepsilon\max_{x\in\mathbb{R}}\psi^{4}\int_{\{ \zeta\leq\Theta \}}\psi^{2}dx+\frac{C}{\varepsilon}\int\psi_{x}^{2}\big( \frac{|\psi|}{\theta^{\frac{1}{2}}}
+\frac{1}{\theta} \big)dx\\
& \leq \varepsilon C\max_{x\in\mathbb{R}}\psi^{4}+\frac{C}{\varepsilon}\int\psi_{x}^{2}
\big( \frac{|\psi|}{\theta^{\frac{1}{2}}}+\frac{1}{\theta} \big)dx.\label{47}
\end{split}
\end{equation}
Substituting \eqref{46} and \eqref{47} into \eqref{45}, and choosing $\varepsilon$ suitably small, one obtains
\begin{equation}
\begin{split}
&\int\left(\phi_{x}^{2}+(\zeta-\Theta)_{+}^{2}+\psi^{4}\right)dx
+\int_{0}^{t}\int\Big( \big( \theta+\psi^{2} \big)\psi_{x}^{2}+\theta\phi_{x}^{2}+\zeta_{x}^{2} \Big)dxds\\
&\leq C\int_{0}^{t}\int\left( \frac{\zeta_{x}^{2}}{\theta}
+\Big( \frac{|\psi|}{\theta^{\frac{1}{2}}}+\frac{1}{\theta} \Big)\psi_{x}^{2} \right)dxds+C\\
&\leq \frac{1}{2}\int_{0}^{t}\int\big(\zeta_{x}^{2}+\psi^{2}\psi_{x}^{2}\big)dxds
+C\int_{0}^{t}\int\left( \frac{\zeta_{x}^{2}}{\theta^{2}}+\frac{\psi_{x}^{2}}{\theta} \right)dxds+C\\
&\leq \frac{1}{2}\int_{0}^{t}\int\big(\zeta_{x}^{2}+\psi^{2}\psi_{x}^{2}\big)dxds+C.\label{49}
\end{split}
\end{equation}
From \eqref{new} and \eqref{o}, it holds that
\begin{gather}
\int_{\{ \zeta\leq 2\Theta \}}\zeta^{2}dx\leq C\int\Phi\left( \frac{\theta}{\Theta} \right)dx\leq C_{0},\notag
\end{gather}
and
\begin{gather}
\int_{\{ \zeta> 2\Theta \}}\zeta^{2}dx\leq 4\int_{\{ \zeta>2\Theta \}}\left( \zeta-\Theta \right)^{2}dx\leq 4\int
\left( \zeta-\Theta \right)_{+}^{2}dx.\label{50}
\end{gather}
Combining \eqref{49}-\eqref{50}, \eqref{61} is immediately obtained, and  the proof of Lemma 3.3 is completed.
\end{proof}

\vskip 0.3cm
Below, in order to obtain the higher order derivative estimates of the solutions, a weighted estimate is required, for which the following weighting function is introduced
\begin{equation}\label{sigma}
\tilde{\sigma}(t) =
\begin{cases}
t & \text{}  0\leq t \leq 1,\\
1 & \text{} t>1.
\end{cases} 
\end{equation}

\begin{lemma}\label{67}
There exists a positive constant $C$ depending on $\displaystyle\inf_{x\in \mathbb{R}}v_{0},\|\phi_{0}\|_{H^{1}(\mathbb{R})}$, $\|\psi_{0}\|_{L^{2}(\mathbb{R})}$, $\|\psi_{0}\|_{L^{4}(\mathbb{R})}$, $\|\zeta_{0}\|_{L^{2}(\mathbb{R})}$ and $C_{0}$ such that, for any given $T>0$,
\begin{gather}
\tilde\sigma\int\psi_{x}^{2}dx+\tilde\sigma^{2}\int\zeta_{x}^{2}dx+\int_{0}^{T}\tilde\sigma\int\psi_{xx}^{2}dxds
+\int_{0}^{T}\tilde\sigma^{2}\int\zeta_{xx}^{2}dxds\leq C.\label{68}
\end{gather}
\end{lemma}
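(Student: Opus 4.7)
The estimate will be proved by weighted energy arguments: test the momentum equation \eqref{2} against $-\tilde\sigma\psi_{xx}$ and the energy equation \eqref{3} against $-\tilde\sigma^{2}\zeta_{xx}$, then integrate over $\mathbb{R}\times[0,T]$ and add. The cut-off $\tilde\sigma$ kills the initial-time contributions of $\|\psi_{0x}\|$ and $\|\zeta_{0x}\|$, which are allowed to be arbitrarily large by \eqref{strong}; the extra term produced by $\tilde\sigma'$ is supported on $[0,1]$ and is absorbed by the $L^{2}(0,T;L^{2})$ bounds on $\psi_{x},\zeta_{x}$ already available from Lemma \ref{24}.

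For the momentum test, integration by parts in $t$ on $-\tilde\sigma\int\psi_{t}\psi_{xx}dx$ and in $x$ on the viscous term produces
\[
\frac{d}{dt}\Big(\frac{\tilde\sigma}{2}\|\psi_{x}\|^{2}\Big)+\mu\tilde\sigma\int\frac{\psi_{xx}^{2}}{v}dx=\frac{\tilde\sigma'}{2}\|\psi_{x}\|^{2}+\tilde\sigma\int\Big[(p-p_{+})_{x}+\mu\big((v^{-1}-V^{-1})U_{x}\big)_{x}-\mu v^{-2}v_{x}\psi_{x}+R_{1}\Big]\psi_{xx}dx.
\]
Expanding $(p-p_{+})_{x}=R\zeta_{x}/v+R\Theta_{x}/v-R\theta(\phi_{x}+V_{x})/v^{2}$ and absorbing each factor into a small multiple of $\mu\tilde\sigma\int\psi_{xx}^{2}/v\,dx$ via Cauchy-Schwarz, one controls every remaining right-hand term either by Lemma \ref{24} (through $\int\zeta_{x}^{2}$, $\int\phi_{x}^{2}\theta$, $\int\psi_{x}^{2}\theta$) or by the time-integrable decay $\delta(1+s)^{-3/2}$ coming from Lemma 2.1 and \eqref{43}. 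The single genuinely nonlinear piece $\mu\tilde\sigma\int v^{-2}v_{x}\psi_{x}\psi_{xx}dx$ is treated with $\|\psi_{x}\|_{L^{\infty}}\le C\|\psi_{x}\|^{1/2}\|\psi_{xx}\|^{1/2}$ together with the bound $\|\phi_{x}\|\le C$ from Lemma \ref{24}.

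The analogous energy-equation test against $-\tilde\sigma^{2}\zeta_{xx}$ yields
\[
\frac{d}{dt}\Big(\frac{c_{\nu}\tilde\sigma^{2}}{2}\|\zeta_{x}\|^{2}\Big)+\kappa\tilde\sigma^{2}\int\frac{\zeta_{xx}^{2}}{v}dx=c_{\nu}\tilde\sigma\tilde\sigma'\|\zeta_{x}\|^{2}+\tilde\sigma^{2}\int(\cdots)\zeta_{xx}dx,
\]
where $(\cdots)$ collects the pressure flux $pu_{x}-p_{+}U_{x}$, heat-flux correction terms, the frictional heating $\mu(u_{x}^{2}/v-U_{x}^{2}/V)$ and $R_{2}$. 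The genuinely dangerous contribution is $\mu\tilde\sigma^{2}\int v^{-1}\psi_{x}^{2}\zeta_{xx}dx$; using the Gagliardo-Nirenberg inequality $\|\psi_{x}\|_{L^{4}}^{4}\le C\|\psi_{x}\|^{3}\|\psi_{xx}\|$ and Young, we split it as
\[
\tilde\sigma^{2}\Big|\int\frac{\psi_{x}^{2}}{v}\zeta_{xx}dx\Big|\le\frac{\kappa\tilde\sigma^{2}}{4}\int\frac{\zeta_{xx}^{2}}{v}dx+\varepsilon\tilde\sigma\|\psi_{xx}\|^{2}+C\big(\tilde\sigma\|\psi_{x}\|^{2}\big)^{3}.
\]
The crucial factoring $\tilde\sigma^{2}=\tilde\sigma\cdot\tilde\sigma$ is exactly the reason the $\zeta$-weight is $\tilde\sigma^{2}$ and not $\tilde\sigma$: it exchanges one power of $\tilde\sigma$ for the dissipation $\tilde\sigma\|\psi_{xx}\|^{2}$ produced in the first step, leaving only the cubic $(\tilde\sigma\|\psi_{x}\|^{2})^{3}$.

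Adding the two identities and choosing $\varepsilon$ small enough to absorb $\tilde\sigma\|\psi_{xx}\|^{2}$ back into the left-hand side, we obtain, writing $X(t)=\tilde\sigma\|\psi_{x}\|^{2}+\tilde\sigma^{2}\|\zeta_{x}\|^{2}$,
\[
\frac{d}{dt}X+c\big(\tilde\sigma\|\psi_{xx}\|^{2}+\tilde\sigma^{2}\|\zeta_{xx}\|^{2}\big)\le C+CX^{3}+g(t),\qquad\int_{0}^{T}g(s)ds\le C.
\]
Since $X(0)=0$ by the weight, a standard continuity/bootstrap argument (using smallness of $\delta$ to keep the right-hand side under control initially) gives $X(t)\le C$ on $[0,T]$, and integrating the full inequality then yields \eqref{68}. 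The principal obstacle throughout is the cubic cross term $\int v^{-1}\psi_{x}^{2}\zeta_{xx}dx$ produced by the viscous heating: it is this term that forces the two weights to differ by one power of $\tilde\sigma$, and via the $L^{4}$ interpolation above it is also the reason the $L^{4}$ control of $\psi$, and hence the norm $\|\psi_{0}\|_{L^{4}}$, enters the dependence \eqref{strong}.
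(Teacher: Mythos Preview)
Your overall set-up (testing \eqref{2} by $-\tilde\sigma\psi_{xx}$ and \eqref{3} by $-\tilde\sigma^{2}\zeta_{xx}$) matches the paper, but the closure mechanism you propose does not work, and you have also misidentified the main obstruction.

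First, the ODE bootstrap fails. Your inequality $\dfrac{d}{dt}X\le C+CX^{3}+g(t)$ with $X(0)=0$ and $\int_{0}^{T}g\le C$ has \emph{no} small parameter: the constants $C$ come from Lemma~\ref{24} and from Young's inequality with the fixed $\varepsilon$, and they depend on the (possibly large) quantities in \eqref{strong}, not on $\delta$. Integrating, $X(t)\le A+C\int_{0}^{t}X^{3}ds$ with $A$ large, and this comparison blows up at a finite time of order $(CA^{2})^{-1}$; the smallness of $\delta$ never enters the right-hand side, so the ``standard continuity/bootstrap'' you invoke cannot close.

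Second, the dangerous terms are not the ones you single out. In the pressure piece $(p-p_{+})_{x}$ one gets $-R\theta v^{-2}\phi_{x}$, which after Cauchy produces $C\int_{0}^{t}\tilde\sigma\int\theta^{2}\phi_{x}^{2}dxds$; Lemma~\ref{24} only controls $\int\theta\phi_{x}^{2}$, so one power of $\theta$ is left over and you cannot absorb it as you claim. The same happens with $R\theta v^{-1}\psi_{x}$ in the energy equation, giving $C\int_{0}^{t}\tilde\sigma^{2}\int\theta^{2}\psi_{x}^{2}dxds$. These excess powers of $\theta$ are the real obstruction. The paper therefore tracks $\displaystyle M\overset{\text{def}}{=}\max_{\mathbb{R}\times[0,T]}(\tilde\sigma\theta)$ and $\displaystyle\max_{\mathbb{R}\times[0,T]}(\tilde\sigma^{2}\theta^{3})$ throughout, obtaining
\[
\tilde\sigma\|\psi_{x}\|^{2}+\int_{0}^{T}\tilde\sigma\|\psi_{xx}\|^{2}ds\le CM+C,\qquad
\tilde\sigma^{2}\|\zeta_{x}\|^{2}+\int_{0}^{T}\tilde\sigma^{2}\|\zeta_{xx}\|^{2}ds\le C\max(\tilde\sigma^{2}\theta^{3})+C,
\]
and then closes on the \emph{pointwise} quantity $\tilde\sigma\theta^{2}$ via Sobolev and the bound $\|\zeta\|_{L^{2}}\le C$ from Lemma~\ref{24}:
\[
\max(\tilde\sigma\theta^{2})\le C\tilde\sigma\|\zeta\|\,\|\zeta_{x}\|+C\le C\big(\max(\tilde\sigma^{2}\theta^{3})\big)^{1/2}+C\le C\big(\max(\tilde\sigma\theta^{2})\big)^{3/4}+C,
\]
which yields $\tilde\sigma^{1/2}\theta\le C$ and hence \eqref{68}. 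The point is that the algebraic inequality $M'\le C(M')^{3/4}+C$ (with $M'=\max\tilde\sigma\theta^{2}$) is self-closing regardless of the size of $C$, unlike your cubic ODE. Your viscous-heating term $\tilde\sigma^{2}\int v^{-1}\psi_{x}^{2}\zeta_{xx}$ is then handled using the \emph{already obtained} bound $\tilde\sigma\|\psi_{x}\|^{2}\le CM+C$ rather than by generating an $X^{3}$ term.

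Finally, the remark that $\|\psi_{0}\|_{L^{4}}$ enters here through the $L^{4}$ interpolation of $\psi_{x}$ is not correct; the $L^{4}$ norm of $\psi_{0}$ appears in Lemma~\ref{24} via the estimate of $\int\psi^{4}dx$, not in the present lemma.
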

\begin{proof}
Multiplying \eqref{2}  by $-\tilde\sigma\psi_{xx}$, integrating  the resultant over $\mathbb{R}\times (0,t)$, it  yields
\begin{equation}
\begin{split}
&\frac{\tilde\sigma}{2}\int\psi_{x}^{2}dx+\mu\int_{0}^{t}\tilde\sigma\int\frac{\psi_{xx}^{2}}{v}dxds\\
&=\frac{1}{2}\int_{0}^{t}\tilde\sigma^{'}\int\psi_{x}^{2}dxds+\int_{0}^{t}\tilde\sigma\int\big( \frac{R\zeta_{x}}{v}-\frac{R\theta\phi_{x}}{v^{2}}
-\frac{R\zeta-p_{+}\phi}{v^{2}}V_{x} \big)\psi_{xx}dxds\\
&\quad  -\mu\int_{0}^{t}\tilde\sigma\int\psi_{x}\big( \frac{1}{v} \big)_{x}\psi_{xx}dxds
+\mu\int_{0}^{t}\tilde\sigma\int\big( \frac{U_{x}}{V}-\frac{U_{x}}{v} \big)_{x}\psi_{xx}dxds\\
&\quad  +\int_{0}^{t}\tilde\sigma\int R_{1}\psi_{xx}dxds.\label{60}
\end{split}
\end{equation}
The following estimates are given for each term on the right-hand side of \eqref{60} in turn. First,  from \eqref{61}, one has
\begin{equation}
    \begin{split}
        \frac{1}{2}\int_{0}^{t}\tilde\sigma^{'}\int\psi_{x}^{2}dxds\leq C\int_{0}^{t}\int\left( \frac{\psi_{x}^{2}}{\theta}+\theta\psi_{x}^{2} \right)dxds\leq C.\label{95}
    \end{split}
\end{equation}
Next, due to  \eqref{new}, \eqref{v} and \eqref{61}, it derives that
\begin{eqnarray}
&&\Big| \int_{0}^{t}\tilde\sigma\int\big( \frac{R\zeta_{x}}{v}-\frac{R\theta\phi_{x}}{v^{2}}
-\frac{R\zeta-p_{+}\phi}{v^{2}}V_{x} \big)\psi_{xx}dxds \Big|\notag\\
&&\leq \frac{\mu}{8}\int_{0}^{t}\tilde\sigma\int\frac{\psi_{xx}^{2}}{v}dxds
+C\int_{0}^{t}\tilde\sigma\int\Big( \zeta_{x}^{2}+\theta^{2}\phi_{x}^{2}+(\phi^{2}+\zeta^{2})\Theta_{x}^{2} \Big)dxds\notag\\
&&\leq \frac{\mu}{8}\int_{0}^{t}\tilde\sigma\int\frac{\psi_{xx}^{2}}{v}dxds+C\max_{x\in\mathbb{R},t\geq0}(\tilde\sigma\theta)\int_{0}^{t}\int\theta\phi_{x}^{2}dxds+C\int_{0}^{t}\int_{\{ \zeta>\Theta \}}\phi^{2}\Theta_{x}^{2}dxds\notag\\
&&\quad +C\int_{0}^{t}
\int_{\{ \zeta\leq\Theta \}}\phi^{2}\Theta_{x}^{2}dxds +C\int_{0}^{t}\int_{\{ \zeta>\Theta \}}\zeta^{2}\Theta_{x}^{2}dxds+C\int_{0}^{t}
\int_{\{ \zeta\leq\Theta \}}\zeta^{2}\Theta_{x}^{2}dxds+C\notag\\
&&\leq \frac{\mu}{8}\int_{0}^{t}\tilde\sigma\int\frac{\psi_{xx}^{2}}{v}dxds+C\int_{0}^{t}\int_{\{ \zeta\leq\Theta \}}\frac{\phi^{2}\Theta_{x}^{2}}{\theta^{2}}dxds+C\int_{0}^{t}\int_{\{ \zeta\leq\Theta \}}\frac{\zeta^{2}\Theta_{x}^{2}}{\theta}dxds\notag\\
&&\quad  +C\int_{0}^{t}\max_{x\in\mathbb{R}}
\left( \zeta-\frac{\Theta}{2} \right)_{+}^{2}\int\Theta_{x}^{2}dxds+C\max_{x\in\mathbb{R},t\geq0}(\tilde\sigma\theta)+C\label{63}\\
&&\leq  \frac{\mu}{8}\int_{0}^{t}\tilde\sigma\int\frac{\psi_{xx}^{2}}{v}dxds
+C\int_{0}^{t}\max_{ x\in\mathbb{R} }\big( \zeta-\frac{\Theta}{2} \big)_{+}^{2}ds+C\max_{x\in\mathbb{R},t\geq0}(\tilde\sigma\theta)+C\notag\\
&&\leq  \frac{\mu}{8}\int_{0}^{t}\tilde\sigma\int\frac{\psi_{xx}^{2}}{v}dxds+C\max_{x\in\mathbb{R},t\geq0}(\tilde\sigma\theta)+C.\notag
\end{eqnarray}
By Cauchy's inequality, \eqref{new} and \eqref{61}, one gets
\begin{equation}
\begin{split}
&\Big| \mu\int_{0}^{t}\tilde\sigma\int\psi_{x}\big(\frac{1}{v}\big)\psi_{xx}dxds \Big|\leq C\int_{0}^{t}\tilde\sigma\int\Big( \big| \frac{\psi_{x}\phi_{x}\psi_{xx}}{v^{2}} \big|
+\big| \frac{\psi_{x}V_{x}\psi_{xx}}{v^{2}} \big| \Big)dxds\\
&\leq \frac{\mu}{16}\int_{0}^{t}\tilde\sigma\int\frac{\psi_{xx}^{2}}{v}dxds
+C\int_{0}^{t}\int\left( \psi_{x}^{2}\phi_{x}^{2}+\psi_{x}^{2}\Theta_{x}^{2} \right)dxds\\
&\leq \frac{\mu}{16}\int_{0}^{t}\tilde\sigma\int\frac{\psi_{xx}^{2}}{v}dxds
+C \int_{0}^{t}\|\psi_{x}\|_{L^{\infty}}^{2}\int\phi_{x}^{2}dxds+C\int_{0}^{t}\int\psi_{x}^{2}dxds\\
&\leq \frac{\mu}{8}\int_{0}^{t}\tilde\sigma\int\frac{\psi_{xx}^{2}}{v}dxds+C\int_{0}^{t}\int\psi_{x}^{2}dxds\leq \frac{\mu}{8}\int_{0}^{t}\tilde\sigma\int\frac{\psi_{xx}^{2}}{v}dxds+C,
\end{split}
\end{equation}
by the similarly way,  it yields that
\begin{equation}
\begin{split}
&\mu\int_{0}^{t}\tilde\sigma\int\big( \frac{U_{x}}{V}-\frac{U_{x}}{v} \big)_{x}\psi_{xx}dxds\\
&=\mu\int_{0}^{t}\tilde\sigma\int\big( \frac{U_{xx}}{V}-\frac{U_{xx}}{v}-\frac{U_{x}V_{x}}{V^{2}}
+\frac{v_{x}U_{x}}{v^{2}} \big)\psi_{xx}dxds\\
&=\mu\int_{0}^{t}\tilde\sigma\int\big( \frac{\phi U_{xx}}{V}-\frac{\phi(\phi+2V)}{v^{2}V^{2}}U_{x}V_{x}+\frac{\phi_{x}U_{x}}{v^{2}} \big)\psi_{xx}dxds\\
&\leq \frac{\mu}{8}\int_{0}^{t}\tilde\sigma\int\frac{\psi_{xx}^{2}}{v}dxds
+C\int_{0}^{t}\int\big(( U_{xx}^{2}+\Theta_{x}^{2}U_{x}^{2} )\phi^{2}+\phi^{4}U_{x}^{2}\Theta_{x}^{2}+\phi_{x}^{2}U_{x}^{2} \big)dxds\\
&\leq \frac{\mu}{8}\int_{0}^{t}\tilde\sigma\int\frac{\psi_{xx}^{2}}{v}dxds
+C\int_{0}^{t}\frac{1}{(1+s)^{2}}\int\phi_{x}^{2}dxds+C\\
&\leq \frac{\mu}{8}\int_{0}^{t}\tilde\sigma\int\frac{\psi_{xx}^{2}}{v}dxds+C,
\end{split}
\end{equation}
and
\begin{equation}
\begin{split}
&\Big| \int_{0}^{t}\tilde\sigma\int R_{1}\psi_{xx}dxds \Big|\\
&\leq \frac{\mu}{8}\int_{0}^{t}\tilde\sigma\int\frac{\psi_{xx}^{2}}{v}dxds+C\int_{0}^{t}\int R_{1}^{2}dxds\\
&\leq \frac{\mu}{8}\int_{0}^{t}\tilde\sigma\int\frac{\psi_{xx}^{2}}{v}dxds+C.\label{62}
\end{split}
\end{equation}
Substituting \eqref{63}-\eqref{62} into \eqref{60}, one obtains 
\begin{gather}
\sup_{0\leq t\leq T}\tilde\sigma\int\psi_{x}^{2}dx+\int_{0}^{T}\tilde\sigma\int\psi_{xx}^{2}dxdt\leq C\max_{x\in\mathbb{R},t\geq0}(\tilde\sigma\theta)+C.\label{12}
\end{gather}
Furthermore, multiplying \eqref{3} by $-\tilde\sigma^{2}\zeta_{xx}$, integrating the resultant over $\mathbb{R}\times (0,t)$, it leads to
\begin{equation}
\begin{split}
&\frac{c_{\nu}}{2}\tilde\sigma^{2}\int\zeta_{x}^{2}dx+\kappa\int_{0}^{t}\tilde\sigma^{2}\int\frac{\zeta_{xx}^{2}}{v}dxds\\
&=c_{\nu}\int_{0}^{t}\tilde\sigma\tilde\sigma^{'}\int\zeta_{x}^{2}dxds
+\int_{0}^{t}\tilde\sigma^{2}\int\big( pu_{x}-p_{+}U_{x} \big)\zeta_{xx}dxds\\
&\quad  -\kappa\int_{0}^{t}\tilde\sigma^{2}\int\zeta_{x}\big( \frac{1}{v} \big)_{x}\zeta_{xx}dxds
-\kappa\int_{0}^{t}\tilde\sigma^{2}\int\big( \frac{\Theta_{x}}{v}-\frac{\Theta_{x}}{V} \big)_{x}\zeta_{xx}dxds\\
&\quad  -\mu\int_{0}^{t}\tilde\sigma^{2}\int\big( \frac{u_{x}^{2}}{v}-\frac{U_{x}^{2}}{V} \big)\zeta_{xx}dxds+\int_{0}^{t}\tilde\sigma^{2}\int R_{2}\zeta_{xx}dxds.\label{64}
\end{split}
\end{equation}
From \eqref{61} and Cauchy's inequality, one has
\begin{equation}
\begin{split}
&c_{\nu}\int_{0}^{t}\tilde\sigma\tilde\sigma_t^{'}\int\zeta_{x}^{2}dxds\leq c_{\nu}\int_{0}^{t}\int\zeta_{x}^{2}dxds\leq C,\label{66}
\end{split}
\end{equation}
and the following estimator is obtained
\begin{equation}
\begin{split}
&\int_{0}^{t}\tilde\sigma^{2}\int\big( pu_{x}-p_{+}U_{x} \big)\zeta_{xx}dxds=\int_{0}^{t}\tilde\sigma^{2}\int\big( \frac{R\theta}{v}\psi_{x}+\frac{R\zeta-p_{+}\phi}{v}U_{x} \big)\zeta_{xx}dxds\\
&\leq \frac{\kappa}{8}\int_{0}^{t}\tilde\sigma^{2}\int\frac{\zeta_{xx}^{2}}{v}dxds
+C\int_{0}^{t}\tilde\sigma^{2}\int\big( (\phi^{2}+\zeta^{2})U_{x}^{2}+\theta^{2}\psi_{x}^{2} \big)dxds\\
&\leq \frac{\kappa}{8}\int_{0}^{t}\tilde\sigma^{2}\int\frac{\zeta_{xx}^{2}}{v}dxds
+C\max_{x\in\mathbb{R},t\geq0}(\tilde\sigma\theta)\int_{0}^{t}\int\theta\psi_{x}^{2}dxds\\
&\quad \quad +C\int_{0}^{t}\max_{x\in\mathbb{R}}\big( \zeta-\frac{\Theta}{2} \big)_{+}^{2}ds+C\int_{0}^{t}\int U_{x}^{2}dxds\\
&\leq \frac{\kappa}{8}\int_{0}^{t}\tilde\sigma^{2}\int\frac{\zeta_{xx}^{2}}{v}dxds+C\max_{x\in\mathbb{R},t\geq0}(\tilde\sigma\theta)+C.
\end{split}
\end{equation}
Recalling \eqref{61} and Cauchy's inequality, one gets
\begin{equation}
\begin{split}
&-\kappa\int_{0}^{t}\tilde\sigma^{2}\int\zeta_{x}\big( \frac{1}{v} \big)_{x}\zeta_{xx}dxds\\
&\leq C\int_{0}^{t}\tilde\sigma^{2}\int\frac{|\zeta_{x}\phi_{x}\zeta_{xx}|+|\zeta_{x}V_{x}\zeta_{xx}|}{v^{2}}dxds\\
&\leq \frac{\kappa}{16}\int_{0}^{t}\tilde\sigma^{2}\int\frac{\zeta_{xx}^{2}}{v}dxds
+C\int_{0}^{t}\tilde\sigma^{2}\int \zeta_{x}^{2}\left(\phi_{x}^{2}+\Theta_{x}^{2} \right)dxds\\
&\leq \frac{\kappa}{16}\int_{0}^{t}\tilde\sigma^{2}\int\frac{\zeta_{xx}^{2}}{v}dxds
+C\int_{0}^{t}\tilde\sigma^{2}\|\zeta_{x}\|_{L^{\infty}(\mathbb{R})}^{2}\int\phi_{x}^{2}dxds
+C\int_{0}^{t}\int\zeta_{x}^{2}dxds\\
&\leq \frac{\kappa}{16}\int_{0}^{t}\tilde\sigma^{2}\int\frac{\zeta_{xx}^{2}}{v}dxds
+C\int_{0}^{t}\tilde\sigma^{2}\|\zeta_{x}\|\|\zeta_{xx}||ds+C\\
&\leq \frac{\kappa}{8}\int_{0}^{t}\tilde\sigma^{2}\int\frac{\zeta_{xx}^{2}}{v}dxds+C,
\end{split}
\end{equation}
by the similarly way, it derives that
\begin{equation}
\begin{split}
&-\kappa\int_{0}^{t}\tilde\sigma^{2}\int\big( \frac{\Theta_{x}}{v}-\frac{\Theta_{x}}{V} \big)_{x}\zeta_{xx}dxds=-\kappa\int_{0}^{t}\tilde\sigma^{2}\int\big( \frac{\Theta_{xx}}{v}-\frac{\Theta_{xx}}{V}-\frac{\Theta_{x}v_{x}}{v^{2}}+\frac{\Theta_{x}V_{x}}{V^{2}} \big)\zeta_{xx}dxds\\
&=-\kappa\int_{0}^{t}\tilde\sigma^{2}\int\big( -\frac{\phi\Theta_{xx}}{vV}-\frac{\phi_{x}\Theta_{x}}{v^{2}}+\frac{\phi(\phi+2V)}{v^{2}V^{2}}\Theta_{x}V_{x} \big)\zeta_{xx}dxds\\
&\leq \frac{\kappa}{8}\int_{0}^{t}\tilde\sigma^{2}\int\frac{\zeta_{xx}^{2}}{v}dxds+C\int_{0}^{t}\tilde\sigma^{2}\int\big( (\Theta_{xx}^{2}+\Theta_{x}^{4})\phi^{2}
+\phi_{x}^{2}\Theta_{x}^{2}+\phi^{4}\Theta_{x}^{4} \big)dxds\\
&\leq \frac{\kappa}{8}\int_{0}^{t}\tilde\sigma^{2}\int\frac{\zeta_{xx}^{2}}{v}dxds
+C\int_{0}^{t}\tilde\sigma^{2}\int\phi_{x}^{2}dxds+C\\
&\leq \frac{\kappa}{8}\int_{0}^{t}\tilde\sigma^{2}\int\frac{\zeta_{xx}^{2}}{v}dxds
+C\int_{0}^{t}\tilde\sigma^{2}\int\zeta^{4}\phi_{x}^{2}dxds+C\int_{0}^{t}\tilde\sigma^{2}\int\theta^{4}\phi_{x}^{2}dxds+C\\
&\leq \frac{\kappa}{8}\int_{0}^{t}\tilde\sigma\int\frac{\zeta_{xx}^{2}}{v}dxds
+C\int_{0}^{t}\|\zeta\|_{L^{\infty}(\mathbb{R})}^{4}\int\phi_{x}^{2}dxds
+C\max_{x\in\mathbb{R},t\geq0}(\tilde\sigma^{2}\theta^{3})\int_{0}^{t}\int\theta\phi_{x}^{2}dxds+C\\
&\leq \frac{\kappa}{8}\int_{0}^{t}\tilde\sigma\int\frac{\zeta_{xx}^{2}}{v}dxds
+C\max_{x\in\mathbb{R},t\geq0}(\tilde\sigma^{2}\theta^{3})+C,\notag
\end{split}
\end{equation}
and
\begin{equation}
\begin{split}
&-\mu\int_{0}^{t}\tilde\sigma^{2}\int\big( \frac{u_{x}^{2}}{v}-\frac{U_{x}^{2}}{V}\zeta_{xx} \big)dxds=-\mu\int_{0}^{t}\tilde\sigma^{2}\int\big( \frac{\psi_{x}^{2}+2\psi_{x}U_{x}}{v}-\frac{\phi U_{x}^{2}}{vV} \big)\zeta_{xx}dxds\\
&\leq \frac{\kappa}{16}\int_{0}^{t}\tilde\sigma^{2}\int\frac{\zeta_{xx}^{2}}{v}dxds
+C\int_{0}^{t}\tilde\sigma^{2}\int\big(\psi_{x}^{4}+\psi_{x}^{2}U_{x}^{2}+\phi^{2}U_{x}^{4}\big)dxds\\
&\leq \frac{\kappa}{16}\int_{0}^{t}\tilde\sigma\int\frac{\zeta_{xx}^{2}}{v}dxds
+C\int_{0}^{t}\tilde\sigma^{2}\int\psi_{x}^{4}dxds+C\int_{0}^{t}\int U_{x}^{4}dxds\\
&\leq \frac{\kappa}{16}\int_{0}^{t}\tilde\sigma\int\frac{\zeta_{xx}^{2}}{v}dxds
+C\int_{0}^{t}\tilde\sigma^{2}||\psi_{x}||_{L^{\infty}(\mathbb{R})}^{2}\int\psi_{x}^{2}dxds+C\\
&\leq \frac{\kappa}{16}\int_{0}^{t}\tilde\sigma^{2}\int\frac{\zeta_{xx}^{2}}{v}dxds
+C\int_{0}^{t}\tilde\sigma\int\big(\psi_{xx}^{2}+\psi_{x}^{2}\big)dxds
+C\\
&\leq \frac{\kappa}{16}\int_{0}^{t}\tilde\sigma^{2}\int\frac{\zeta_{xx}^{2}}{v}dxds
+C\max_{x\in\mathbb{R},t\geq0}(\tilde\sigma\theta)+C.
\end{split}
\end{equation}
From \eqref{43} and Cauchy's inequality, one has
\begin{equation}
\begin{split}
&\int_{0}^{t}\tilde\sigma^{2}\int R_{2}\zeta_{xx}dxds\\
&\leq \frac{\kappa}{16}\int_{0}^{t}\tilde\sigma^{2}\int\frac{\zeta_{xx}^{2}}{v}dxds+C\int_{0}^{t}\int R_{2}^{2}dxds\\
&\leq \frac{\kappa}{16}\int_{0}^{t}\tilde\sigma^{2}\int\frac{\zeta_{xx}^{2}}{v}dxds+C.\label{65}
\end{split}
\end{equation}
Finally, substituting estimates \eqref{66}-\eqref{65} into \eqref{64}, it shows that
\begin{gather}
\tilde\sigma^{2}\int\zeta_{x}^{2}dx+\int_{0}^{T}\tilde\sigma^{2}\int\zeta_{xx}^{2}dxds\leq C\max_{x\in\mathbb{R},t\geq0}(\tilde\sigma^{2}\theta^{3})+C.\label{11}
\end{gather}
By using Sobolev's inequality and \eqref{11}, one obtains
\begin{equation}
    \begin{split}
       \max_{x\in\mathbb{R},t\geq0} (\tilde\sigma\theta^{2})
       &\leq\max_{x\in\mathbb{R},t\geq0}\left(\tilde\sigma \|\zeta\|_{L^{\infty}(\mathbb{R})}^{2}+1\right)\\
        &\leq C\max_{x\in\mathbb{R},t\geq0}\Big(\tilde\sigma\|\zeta\|\|\zeta_{x}\|\Big)+C\\
        &\leq C\max_{x\in\mathbb{R},t\geq0}(\tilde\sigma\theta^{\frac{3}{2}})+C\\
        &\leq \frac{1}{2}\max_{x\in\mathbb{R},t\geq0}(\tilde\sigma\theta^{2})+C,\notag
    \end{split}
\end{equation}
moreover, from a direct calculation, it gives that
\begin{gather}
\tilde\sigma^{\frac{1}{2}}\theta(x,t)\leq C,\quad  (x,t)\in \mathbb{R}\times [0,\infty),\notag
\end{gather}
together with \eqref{12} and \eqref{11}, the proof of Lemma \ref{67} is completed.
\end{proof}

\

With the Lemma 3.1-3.4 at hand, it's time to give the proof of Proposition 3.2. Following from \eqref{68} and \eqref{3},
\begin{gather}
\int_{1}^{\infty}\Big(\| \zeta_{x} \|^{2}+\big| \frac{d}{dt}\| \zeta_{x} \|^{2} \big|\Big)dt\leq C,
\end{gather}
which, along with Sobolev's inequality, it derives that
\begin{gather}
\lim_{t\to\infty}\| \zeta \|_{L^{\infty}(\mathbb{R})}^{2}\leq C\lim_{t\to \infty}\| \zeta \|\| \zeta_{x} \|
\leq C\lim_{t\to \infty}\| \zeta_{x} \|=0.\notag
\end{gather}
Hence, there exists some $T_{0}>0$ such that, for all $(x,t)\in\mathbb{R}\times [T_{0},+\infty)$,  it holds that
\begin{gather}
-\frac{\theta_{-}}{2}<\zeta<\frac{\theta_{+}}{2}.\notag 
\end{gather}
This directly yields
\begin{gather}
\theta=\Theta+\zeta>\theta_{-}-\frac{\theta_{-}}{2}=\frac{\theta_{-}}{2},\quad \forall (x,t)\in\mathbb{R}\times [T_{0},+\infty),\label{xj}
\end{gather}
and
\begin{gather}
    \theta=\Theta+\zeta<\theta_{+}+\frac{\theta_{+}}{2}=\frac{3\theta_{+}}{2}.\label{shang}
\end{gather}
Similarly, one has
\begin{gather}
    \int_{T_{0}}^{\infty}\big( \| \phi_{x} \|_{L^{2}(\mathbb{R})}^{2}
    +\frac{d}{dt} \| \phi_{x} \|_{L^{2}(\mathbb{R})}^{2} \big)dt\leq C,\quad \int_{T_{0}}^{\infty}\big( \| \psi_{x} \|_{L^{2}(\mathbb{R})}^{2}
    +\frac{d}{dt} \| \psi_{x} \|_{L^{2}(\mathbb{R})}^{2} \big)dt\leq C,\notag
\end{gather}
which, directly gives 
\begin{gather}
\lim_{t\to\infty}\big(\| \phi \|_{L^{\infty}(\mathbb{R})}+\| \psi \|_{L^{\infty}(\mathbb{R})}\big)
\leq C\lim_{t\to\infty}\big(\| \phi_{x} \|+\| \psi_{x} \|\big)=0.\notag 
\end{gather}
Due to  $\theta_{-}<\Theta<\theta_{+}$, combining with \eqref{new}, one gets
\begin{gather}
    \Big| \big\{ \frac{\theta}{\theta_{-}}<\frac{1}{2} \big\} \Big|\leq \Big| \big\{ \frac{\theta}{\Theta}<\frac{1}{2} \big\} \Big|\leq C\int \Phi\big(\frac{\theta}{\Theta}\big)dx\leq C,\label{omega1/2}
\end{gather}
where $C$ depending on $C_{0}$. Now, rewriting the equation \eqref{03} as following
\begin{gather}
    c_{\nu}\theta_{t}+\frac{\theta}{v}u_{x}
    =\big( \kappa\frac{\theta_{x}}{v} \big)_{x}+\mu\frac{u_{x}^{2}}{v},\label{1.3}
\end{gather}
for any given $q>2$, multiplying \eqref{1.3} by $\frac{1}{\theta^{2}}\big( \frac{1}{\theta}-\frac{2}{\theta_{-}} \big)_{+}^{q}$, integrating the resultant over $\mathbb{R}$, one obtains
\begin{equation}\label{theta-q}
    \begin{split}
        &\frac{1}{q+1}\frac{d}{dt}\int\Big( \frac{1}{\theta}-\frac{2}{\theta_{-}} \Big)_{+}^{q+1}dx
        +\int\frac{u_{x}^{2}}{v\theta^{2}}\Big( \frac{1}{\theta}-\frac{2}{\theta_{-}} \Big)_{+}^{q}dx\\
        &\leq \int\frac{u_{x}}{v\theta}\Big( \frac{1}{\theta}-\frac{2}{\theta_{-}} \Big)_{+}^{q}dx\\
        &\leq \frac{1}{2}\int\frac{u_{x}^{2}}{v\theta^{2}}\Big( \frac{1}{\theta}-\frac{2}{\theta_{-}} \Big)_{+}^{q}dx
        +\frac{1}{2}\int\frac{1}{v}\Big( \frac{1}{\theta}-\frac{2}{\theta_{-}} \Big)_{+}^{q}dx,
    \end{split}
\end{equation}
from \eqref{theta-q}, combining with \eqref{v}, it leads to 
\begin{equation}
    \begin{split}
        &\Big\| \big( \frac{1}{\theta}-\frac{2}{\theta_{-}} \big)_{+}\Big \|_{L^{q+1}(\mathbb{R})}^{q}
        \frac{d}{dt}\Big\| \big( \frac{1}{\theta}-\frac{2}{\theta_{-}} \big)_{+} \Big \|_{L^{q+1}(\mathbb{R})}\\
        &\leq C\int \big( \frac{1}{\theta}-\frac{2}{\theta_{-}} \big)_{+}^{q}dx\leq C\Big\| \big( \frac{1}{\theta}-\frac{2}{\theta_{-}} \big)_{+}\Big \|_{L^{q+1}(\mathbb{R})}^{q},\notag 
    \end{split}
\end{equation}
where $C$ is independent of $q$, and in particular, it implies that, there exists some positive constants $C=C(\displaystyle\inf_{x\in\mathbb{R}}v_{0},\displaystyle\inf_{x\in\mathbb{R}}\theta_{0},\|\phi_{0}\|_{H^{1}},\|\psi_{0}\|_{L^{2}},T_{0},C_{0})$ which independent of $q$,  such that 
\begin{gather}
    \sup_{0\leq t\leq T_{0}}\Big\| \big( \frac{1}{\theta}-\frac{2}{\theta_{-}} \big)_{+}\Big \|_{L^{q+1}(\mathbb{R})}\leq C.\notag 
\end{gather}
Letting $q\to \infty$, combining with \eqref{omega1/2}, it shows that
\begin{gather}
  \sup_{0\leq t\leq T_{0}}\Big\| \big( \frac{1}{\theta}-\frac{2}{\theta_{-}} \big)_{+}\Big \|_{L^{\infty}(\mathbb{R})}\leq C,\notag
\end{gather}
which together with \eqref{xj}, one gives
\begin{gather}
    \theta\geq \frac{\theta_{-}}{2+C\theta_{-}}>0,\quad \text{for all\ }(x,t)\in\mathbb{R}\times(0,+\infty).\label{26}
\end{gather}
From \eqref{v}, \eqref{26} and Lemma \ref{21}, letting $\alpha=c_{1}/4$, one obtains
\begin{equation}
\begin{split}
    &\int_{0}^{t}\int\big( p_{+}\Phi\big( \frac{V}{v} \big)+\frac{p_{+}}{|\gamma-1|}\Phi
\big( \frac{\Theta}{\theta} \big)
+\frac{|\zeta|}{\theta}|p_{+}-p| \big)|U_{x}|dxds \\
    &\leq \delta C\int_{0}^{t}\frac{1}{1+s}\int (\phi^{2}+\zeta^{2})e^{-\frac{c_{1}x^{2}}{1+s}}dxds\\
&\leq \delta C\big(1+\int_{0}^{t}\int( \phi_{x}^{2}+\psi_{x}^{2}+\zeta_{x}^{2} )dxds\big)\leq \delta C,\label{001}
\end{split}
\end{equation}
and 
\begin{equation}
\begin{split}
    &\int_{0}^{t}\int \big( \frac{\kappa\zeta^{2}\Theta_{x}^{2}}{v\theta^{2}\Theta}
+\frac{\kappa\Theta\phi^{2}\Theta_{x}^{2}}{v\theta^{2}V^{2}}
+\frac{\kappa|\zeta\phi|\Theta_{x}^{2}}{v\theta^{2}V}+\frac{4\mu\zeta^{2}U_{x}^{2}}{v\theta\Theta} +\frac{\mu|\zeta\phi| U_{x}^{2}}{v\theta V}+\frac{\mu\theta\phi^{2}U_{x}^{2}}{v\Theta V^{2}}\big)dxds\\
    &\leq \delta C\int_{0}^{t}\frac{1}{1+s}\int (\phi^{2}+\zeta^{2})e^{-\frac{c_{1}x^{2}}{1+s}}dxds\\
&\leq \delta C\big(1+\int_{0}^{t}\int( \phi_{x}^{2}+\psi_{x}^{2}+\zeta_{x}^{2} )dxds\big)\leq \delta C,\label{002}
\end{split}
\end{equation}
moreover, by the similarly way, one gets
\begin{equation}
\begin{split}
    &\int_{0}^{t}\int \big(|\psi R_{1}|+|\frac{\zeta}{\theta}R_{2}|\big)dxds\\
    &\leq \delta C\int_{0}^{t}\frac{1}{1+s}\int (\psi^{2}+\zeta^{2})e^{-\frac{c_{1}x^{2}}{1+s}}dxds+
    \delta C \int_{0}^{t}\frac{1}{(1+s)^{2}}\int e^{-\frac{c_{1}x^{2}}{1+s}}dxds\\
&\leq \delta C\big(1+\int_{0}^{t}\int( \phi_{x}^{2}+\psi_{x}^{2}+\zeta_{x}^{2} )dxds\big)\leq \delta C,\label{003} 
\end{split}
\end{equation}
 where $C$ depending on $\inf_{x\in \mathbb{R}}v_{0}$, $\inf_{x\in\mathbb{R}}\theta_{0}$, $\|\phi_{0}\|_{H^{1}(\mathbb{R})}$, $\|\psi_{0}\|_{L^{2}(\mathbb{R})}$, $\|\psi_{0}\|_{L^{4}(\mathbb{R})}$, $\|\zeta_{0}\|_{L^{2}(\mathbb{R})}$, $C_{0}$. Following from \eqref{26}-\eqref{003}, it implies that
\begin{eqnarray}\label{bound of G(t)}
        G(t)&\overset{\text{def}}{=} &\int_{0}^{t}\int \big( \frac{\kappa\zeta^{2}\Theta_{x}^{2}}{v\theta^{2}\Theta}
+\frac{\kappa\Theta\phi^{2}\Theta_{x}^{2}}{v\theta^{2}V^{2}}
+\frac{\kappa|\zeta\phi|\Theta_{x}^{2}}{v\theta^{2}V}+\frac{4\mu\zeta^{2}U_{x}^{2}}{v\theta\Theta} +\frac{\mu|\zeta\phi| U_{x}^{2}}{v\theta V}+\frac{\mu\theta\phi^{2}U_{x}^{2}}{v\Theta V^{2}}\big)dxds\notag\\
&& \quad +\int_{0}^{t}\int\big( p_{+}\Phi( \frac{V}{v} )+\frac{p_{+}}{|\gamma-1|}\Phi
( \frac{\Theta}{\theta} )
+\frac{|\zeta|}{\theta}|p_{+}-p| \big)|U_{x}|dxds\\
&& \quad +\int_{0}^{t}\int \big(|\psi R_{1}|+|\frac{\zeta}{\theta}R_{2}|\big)dxds\leq \delta C\leq C_{0}\notag,
 \end{eqnarray}
where in order for the last inequality of \eqref{bound of G(t)} to hold, it need to choose a suitable small $\delta_{0}$ only depending on 
$\inf_{x\in \mathbb{R}}v_{0}$, $\inf_{x\in\mathbb{R}}\theta_{0}$, $\|\phi_{0}\|_{H^{1}(\mathbb{R})}$, $\|\psi_{0}\|_{L^{2}(\mathbb{R})}$, $\|\psi_{0}\|_{L^{4}(\mathbb{R})}$, $\|\zeta_{0}\|_{L^{2}(\mathbb{R})}$,
such that $\delta<\delta_{0}$. So far,  the proof of Proposition 3.2 is completed.
Finally,  the proof of Proposition 3.1 is given below. For $t\in[0,T_{0}]$, multiplying \eqref{2} by $-\psi_{xx}$,  integrating the resultant over $\mathbb{R}\times (0,t)$, one has
\begin{equation}
\begin{split}
&\frac{1}{2}\int\psi_{x}^{2}dx+\mu\int_{0}^{t}\int\frac{\psi_{xx}^{2}}{v}dxds\\
&=\frac{1}{2}\int\psi_{0x}^{2}dx+\int_{0}^{t}\int\big( \frac{R\zeta_{x}}{v}-\frac{R\theta\phi_{x}}{v^{2}}
-\frac{R\zeta-p_{+}\phi}{v^{2}}V_{x} \big)\psi_{xx}dxds\\
&\quad -\mu\int_{0}^{t}\int\psi_{x}\big( \frac{1}{v} \big)_{x}\psi_{xx}dxds
+\mu\int_{0}^{t}\int\big( \frac{U_{x}}{V}-\frac{U_{x}}{v} \big)_{x}\psi_{xx}dxds+\int_{0}^{t}\int R_{1}\psi_{xx}dxds\\
&\leq \frac{\mu}{2}\int_{0}^{t}\int\frac{\psi_{xx}^{2}}{v}dxds+C\max_{(x,t)\in\mathbb{R}\times[0,T_{0}]}\theta+C.\label{130}
\end{split}
\end{equation}
Multiplying \eqref{3} by $\zeta_{xx}$,  integrating the resultant  over $\mathbb{R}\times (0,t)$, one obtains
\begin{equation}
\begin{split}
&\frac{c_{\nu}}{2}\int\zeta_{x}^{2}dx+\kappa\int_{0}^{t}\int\frac{\zeta_{xx}^{2}}{v}dxds\\
&=\frac{c_{\nu}}{2}\int\zeta_{0}^{'2}dx
+\int_{0}^{t}\int\big( pu_{x}-p_{+}U_{x} \big)\zeta_{xx}dxds-\kappa\int_{0}^{t}\int\zeta_{x}\big( \frac{1}{v} \big)_{x}\zeta_{xx}dxds\\
&\qquad  
-\kappa\int_{0}^{t}\int\big( \frac{\Theta_{x}}{v}-\frac{\Theta_{x}}{V} \big)_{x}\zeta_{xx}dxds-\mu\int_{0}^{t}\int\big( \frac{u_{x}^{2}}{v}-\frac{U_{x}^{2}}{V} \big)\zeta_{xx}dxds\\
&\qquad\qquad\qquad+\int_{0}^{t}\int R_{2}\zeta_{xx}dxds.\\
&\leq \frac{\kappa}{2}\int_{0}^{t}\int\frac{\zeta_{xx}^{2}}{v}dxds+C\max_{(x,t)\in\mathbb{R}\times[0,T_{0}]}\theta^{3}+C.\label{560}
\end{split}
\end{equation}
From \eqref{130} and \eqref{560}, it shows that
\begin{equation}
    \begin{split}
        &\sup_{t\in [0,T_{0}]}\int(\psi_{x}^{2}+\zeta_{x}^{2})dx+\int_{0}^{T_{0}}\int(\psi_{xx}^{2}+\zeta_{xx}^{2})dxds\leq C\max_{(x,t)\in\mathbb{R}\times[0,T_{0}]}\theta^{3}+C,\label{820}
    \end{split}
\end{equation}
which together with  Sobolev's inequality and \eqref{61}, one gives
\begin{equation}
\begin{split}
    \max_{(x,t)\in\mathbb{R}\times[0,T_{0}]}\theta^{2}&\leq C\int\zeta\zeta_{x}dx+C\leq C\big(\int\zeta^{2}dx\big)^{\frac{1}{2}}\big(\int\zeta_{x}^{2}dx\big)^{\frac{1}{2}}+C\\
    &\leq C\max_{(x,t)\in\mathbb{R}\times[0,T_{0}]}\theta^{\frac{3}{2}}+C\leq \frac{1}{2}\max_{(x,t)\in\mathbb{R}\times[0,T_{0}]}\theta^{2}+C,\notag
\end{split}
\end{equation}
which in particular implies that
\begin{gather}
   \max_{(x,t)\in\mathbb{R}\times[0,T_{0}]}\theta\leq C(T_{0}),\notag
\end{gather}
and moreover, from \eqref{shang}, it holds that
\begin{gather}
     \sup_{(x,t)\in\mathbb{R}\times[0,+\infty)}\theta\leq C,\notag
\end{gather}
 together with lemma 2.3 and  \eqref{820}, it gives 
\begin{gather}
    \sup_{t\in(0,+\infty)}\int(\psi_{x}^{2}+\zeta_{x}^{2})dx+\int_{0}^{+\infty}\int(\psi_{xx}^{2}+\zeta_{xx}^{2})dxds\leq C,\notag
\end{gather}
and so far,   the proof of proposition 3.1 is completed.

\section{Proof of Theorem 1.2}
\indent\qquad
Similar to the proof procedure in section 3, noticing that $(V_{\pm}^{r},U_{\pm}^{r},\Theta_{\pm}^{r})$ satisfies Euler system \eqref{14} and $(V^{cd},U^{cd},\Theta^{cd})$ satisfies  \eqref{931},   rewriting the Cauchy problem \eqref{01}-\eqref{23} as
\begin{gather}
 \displaystyle      \phi_{t}-\psi_{x}=0 ,\label{3301}\\
 \displaystyle      \psi_{t}+\left( p-P \right)_{x}=\mu\big( \frac{u_{x}}{v}-\frac{U_{x}}{V} \big)_{x}+F ,\label{3302} \\
\displaystyle        c_{\nu}\zeta_{t}+pu_{x}-PU_{x}=\kappa\big( \frac{\theta_{x}}{v}-\frac{\Theta_{x}}{V} \big)_{x}+\mu\big( \frac{u_{x}^{2}}{v}-\frac{U_{x}^{2}}{V} \big)+G,\label{3303}\\
\displaystyle      (\phi,\psi,\zeta)(x,0)=(\phi_{0},\psi_{0},\zeta_{0})(x)\xrightarrow{x\rightarrow\pm\infty}(0,0,0),\quad x\in \mathbb{R},\label{3305}
\end{gather}
where 
\begin{equation}
\begin{split}
&P=\frac{R\Theta}{V},\quad P_{\pm}=\frac{R\Theta_{\pm}^{r}}{V_{\pm}^{r}},\quad F=(P_{-}+P_{+}-P)_{x}+\big( \mu\frac{U_{x}}{V} \big)_{x}-U_{t}^{cd},\notag
\end{split}
\end{equation}
and 
\begin{equation}
\begin{split}
G=(p^{m}&-P)U_{x}^{cd}+(P_{-}-P)(U_{-}^{r})_{x}+(P_{+}-P)(U_{+}^{r})_{x}+\mu\frac{U_{x}^{2}}{V}+\kappa\big( \frac{\Theta_{x}}{V}-\frac{\Theta_{x}^{cd}}{V^{cd}} \big)_{x}.\notag
\end{split}
\end{equation}

\begin{proposition}
There exists a small constant $\delta_{0}$ only depending on $ \inf_{x\in \mathbb{R}}v_{0}$, $\inf_{x\in\mathbb{R}}\theta_{0}$, $\|\phi_{0}\|_{H^{1}(\mathbb{R})}$, $\|\psi_{0}\|_{L^{2}(\mathbb{R})}$, $\|\psi_{0}\|_{L^{4}(\mathbb{R})}$, $\|\zeta_{0}\|_{L^{2}(\mathbb{R})}$,
such that  $D(t)\leq C_{0}$ provided $D(t)\leq 2C_{0} $ and $\delta<\delta_{0}$, where
\begin{eqnarray}
 D(t)&=&\int_{0}^{t}\int\big(\frac{\kappa\zeta^{2}}{v\theta^{2}\Theta}+\frac{\kappa\Theta\phi^{2}}{v\theta^{2}V^{2}}+\frac{\kappa|\phi\zeta|}{v\theta^{2}V}\big)\Theta_{x}^{2}dxds+\int_{0}^{t}\int\big( \frac{\mu\theta\phi^{2}}{v\theta V^{2}}+\frac{4\mu\zeta^{2}}{v\theta\Theta}+\frac{\mu|\phi\zeta|}{v\theta V} \big)U_{x}^{2}dxds\notag\\
&& +\int_{0}^{t}\int \big( |F\psi|+|G\frac{\zeta}{\theta}| \big)dxds+\int_{0}^{t}\int|Q_{2}|dxds,\label{901}
\end{eqnarray}
and
\begin{gather}
    C_{0}=\int\big(\frac{\psi_{0}^{2}}{2}+R\Theta_{0}\Phi( \frac{v_{0}}{V_{0}})+c_{\nu}\Theta_{0}\Phi( \frac{\theta_{0}}{\Theta_{0}}  )\big)dx.\label{7312}
\end{gather}
\end{proposition}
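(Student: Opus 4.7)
The plan is to mirror the proof of Proposition 3.2, adapting each of Lemmas 3.1--3.4 to the composite-wave setting where $(V,U,\Theta)$ is now defined by \eqref{V25}. I would first derive the basic energy identity by multiplying \eqref{3301} by $-R\Theta(v^{-1}-V^{-1})$, \eqref{3302} by $\psi$, and \eqref{3303} by $\zeta/\theta$, and adding. This yields a term $\bigl(\tfrac{\psi^{2}}{2}+R\Theta\Phi(v/V)+c_{\nu}\Theta\Phi(\theta/\Theta)\bigr)_{t}$ plus the positive dissipation $\tfrac{\mu\Theta\psi_{x}^{2}}{v\theta}+\tfrac{\kappa\Theta\zeta_{x}^{2}}{v\theta^{2}}$, minus an exact $x$-derivative, minus a quadratic remainder. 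Compared with Section 3, the sources $R_{1},R_{2}$ are replaced by $F,G$ and there are new terms generated by $\Theta_{t}$ and $V_{t}$ not being governed by the heat equation alone. After integration in $(x,t)$ and using the working assumption $D(t)\leq 2C_{0}$, one arrives at the counterpart of \eqref{new}, with constants depending only on $C_{0}$.

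Next I would reproduce Lemmas 3.2--3.4 almost verbatim. The Jiang-type representation \eqref{vv} still applies, and the estimate on $\int\!\!\int\sigma\,dx\,ds$ of \eqref{sigma} only requires that $\int\Phi(V/v)|U_{x}|\,dx\,ds$ be controlled, which is built into $D(t)$. Here the monotone bound $\theta_{-}<\Theta<\theta_{+}$ used in Section 3 must be replaced by the two-sided bound $\min\{\theta_{\pm},\theta_{\pm}^{m}\}\leq\Theta\leq\max\{\theta_{\pm},\theta_{\pm}^{m}\}$ furnished by Lemma 2.5. The $(\zeta-\Theta)_{+}$ argument leading to \eqref{91} and the $\psi^{4}$ estimate \eqref{phi^4} transfer directly once $R_{1},R_{2}$ are replaced by $F,G$ and $U_{x},\Theta_{x},V_{x}$ are split according to $\mathbb{R}\times(0,t)=\Omega_{-}\cup\Omega_{c}\cup\Omega_{+}$ via Lemma 2.5(iii). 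The weighted $\tilde\sigma$-estimates in Lemma 3.4 also go through, because the crucial smallness of $\Theta_{x}^{cd}$ in the $L^{p}$-norms persists and the rarefaction derivatives satisfy $\|(U^{r}_{\pm})_{x}\|_{L^{p}}+\|(\Theta^{r}_{\pm})_{x}\|_{L^{p}}\leq C\min\{\delta,\delta^{1/p}t^{-1+1/p}\}$.

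Once these analogues are in place, I would close the bootstrap exactly as in \eqref{bound of G(t)}. Sobolev embedding combined with the $H^{1}$-bounds yields $\|\zeta\|_{L^{\infty}}\to 0$ as $t\to\infty$, producing the uniform lower bound $\theta\geq\theta_{-}/(2+C\theta_{-})$. Then for the contact part I use $|(U^{cd},\Theta^{cd},V^{cd})_{x}|=O(\delta)(1+t)^{-1/2}\exp(-c_{1}x^{2}/(1+t))$ together with Lemma 2.3 to estimate its contribution to $D(t)$ by $\delta C\bigl(1+\int_{0}^{t}\!\int(\phi_{x}^{2}+\psi_{x}^{2}+\zeta_{x}^{2})\,dx\,ds\bigr)\leq\delta C$. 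For the rarefaction contributions, the exponential localization from Lemma 2.5(iii) in $\Omega_{\mp}$ together with the $L^{p}$-decay in $\Omega_{c}$ supplies a factor of $\delta$, and the basic bound on $\|\psi\|,\,\|\zeta/\sqrt{\theta}\|$ absorbs the remaining factors.

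The main obstacle is the wave-interaction integral $\int_{0}^{t}\!\!\int\bigl(|F\psi|+|G\zeta/\theta|\bigr)dx\,ds$, since $F$ contains the cross term $(P_{-}+P_{+}-P)_{x}$ and $G$ contains $(p^{m}-P)U_{x}^{cd}+(P_{\pm}-P)(U_{\pm}^{r})_{x}$, all of which mix the three wave profiles. The key observation is complementary localization: on $\Omega_{c}$ the rarefaction deviations $|P_{\pm}-p^{m}|$ and $|(U^{r}_{\pm})_{x}|$ are $O(\delta e^{-c_{0}(|x|+t)})$, while on $\Omega_{\mp}$ the viscous contact derivatives are $O(\delta e^{-c_{0}(|x|+t)})$, so in each region one factor in each cross product decays exponentially and supplies the needed $\delta$-gain. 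Combined with Lemma 2.3 applied to $\psi,\zeta$ against the Gauss-weighted contact contributions, this yields $D(t)\leq\delta C\leq C_{0}$ for $\delta<\delta_{0}$, which closes the bootstrap and completes the proof of Proposition 4.1.
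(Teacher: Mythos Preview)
Your overall strategy is correct and tracks the paper closely: the basic energy identity, the analogues of Lemmas~3.1--3.4, the $(\zeta-\Theta)_{+}$ and $\psi^{4}$ arguments, and the $\tilde\sigma$-weighted bounds all go through as you outline. Two points, however, are handled differently in the paper and are not fully covered by your description.

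First, in the basic energy identity you should not merely say there are ``new terms generated by $\Theta_{t},V_{t}$''; the crucial structural fact is that the $\Theta_{t},V_{t}$ contributions split as $Q_{1}\big((U_{-}^{r})_{x}+(U_{+}^{r})_{x}\big)+Q_{2}$ with $Q_{1}=P\bigl(\Phi(\theta V/(v\Theta))+\gamma\Phi(v/V)\bigr)\geq 0$. Because $(U_{\pm}^{r})_{x}\geq 0$, this term sits on the \emph{left} side of the energy inequality and furnishes the bound $\int_{0}^{t}\!\!\int(\phi^{2}+\zeta^{2})\bigl((U_{-}^{r})_{x}+(U_{+}^{r})_{x}\bigr)\,dx\,ds\leq C$, once $v,\theta$ are shown bounded. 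This is exactly what closes the $\Theta_{x}^{2}$ and $U_{x}^{2}$ pieces of $D(t)$ for the rarefaction derivatives \emph{in their own regions} $\Omega_{\pm}$; your complementary-localization argument only treats $\Omega_{\mp}$ and $\Omega_{c}$ and leaves, for instance, $\int_{\Omega_{-}}(\phi^{2}+\zeta^{2})\bigl((U_{-}^{r})_{x}\bigr)^{2}$ unaccounted for.

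Second, and for the same reason, your treatment of $\int\!\!\int\bigl(|F\psi|+|G\zeta/\theta|\bigr)$ has a gap. You focus on the cross terms $(P_{-}+P_{+}-P)_{x}$ and $(P_{\pm}-P)(U_{\pm}^{r})_{x}$, where complementary localization indeed yields a $\delta$-gain. But $F$ also contains $(\mu U_{x}/V)_{x}$ and $G$ contains $\mu U_{x}^{2}/V$ and $\kappa(\Theta_{x}/V-\Theta_{x}^{cd}/V^{cd})_{x}$, which carry single-wave rarefaction contributions such as $(U_{\pm}^{r})_{xx}$ and $\bigl((U_{\pm}^{r})_{x}\bigr)^{2}$ supported in $\Omega_{\pm}$, where no exponential smallness is available. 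The paper does not use localization here; it instead proves the global bound $\|(F,G)(\cdot,t)\|_{L^{1}(\mathbb{R})}\leq C\delta^{1/8}(1+t)^{-7/8}$ by interpolating between the $\delta$-smallness and the $t^{-1}$ time decay of the rarefaction derivatives, and then pairs this with $\|\psi\|_{L^{\infty}}+\|\zeta\|_{L^{\infty}}\leq C\|(\psi_{x},\zeta_{x})\|^{1/2}$ and the already-established $L^{2}_{t,x}$ control of $(\psi_{x},\zeta_{x})$. Once you incorporate the $Q_{1}$ good term and this $L^{1}$-$L^{\infty}$ argument, your sketch becomes a complete proof that coincides with the paper's.
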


\vskip 0.3cmTo prove proposition 4.1, similar as Section 3,  the following series lemmas are needed.
\begin{lemma}
Under the Proposition 4.1  that $D(t)\leq 2C_{0}$, it holds
\begin{equation}
\begin{split}
&\int\Big( \frac{\psi^{2}}{2}+R\Theta\Phi\big( \frac{v}{V} \big)+\frac{R}{\gamma-1}\Theta\Phi\big( \frac{\theta}{\Theta} \big) \Big)dx+\frac{1}{2}\int_{0}^{t}\int\big(\frac{\mu\Theta}{v\theta}\psi_{x}^{2}+\frac{\kappa\Theta}{v\theta^{2}}\zeta_{x}^{2} \big)dxds\\
&\quad +\int_{0}^{t}\int Q_{1}\big( (U_{-}^{r})_{x}+(U_{+}^{r})_{x} \big)dxds\\
&\leq \int_{0}^{t}\int\big(\frac{\kappa\zeta^{2}}{v\theta^{2}\Theta}+\frac{\kappa\Theta\phi^{2}}{v\theta^{2}V^{2}}+\frac{\kappa|\phi\zeta|}{v\theta^{2}V}\big)\Theta_{x}^{2}dxds+\int_{0}^{t}\int\big( \frac{\mu\theta\phi^{2}}{v\theta V^{2}}+\frac{4\mu\zeta^{2}}{v\theta\Theta}+\frac{\mu|\phi\zeta|}{v\theta V} \big)U_{x}^{2}dxds\\
&\quad +\int_{0}^{t}\int \big( |F\psi|+|G\frac{\zeta}{\theta}| \big)dxds+\int\Big( \frac{\psi_{0}^{2}}{2}+R\Theta_{0}\Phi\big( \frac{v_{0}}{V_{0}} \big)+\frac{R}{\gamma-1}\Theta_{0}\Phi\big( \frac{\theta_{0}}{\Theta_{0}} \big) \Big)dx\\
&\quad \quad+\int_{0}^{t}\int|Q_{2}|dxds\leq 3C_{0},\label{4.1}
\end{split}
\end{equation}
where $Q_{1},Q_{2}$ is defined by \eqref{4113}, \eqref{905} and $C_{0}$ is the same as \eqref{7312}.
\end{lemma}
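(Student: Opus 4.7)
The proof follows the same relative-entropy pattern as Lemma 3.1, with the key structural difference that the constant background pressure $p_{+}$ is replaced by the spatially varying $P = R\Theta/V$ and that two additional source terms $F$ and $G$ appear in \eqref{3302}--\eqref{3303}. I would begin by multiplying \eqref{3301} by $-R\Theta(v^{-1}-V^{-1})$, \eqref{3302} by $\psi$, and \eqref{3303} by $\zeta/\theta$, then adding the three identities. Exactly as in the derivation of \eqref{new1}, the time derivative of the combined quantity $\psi^{2}/2 + R\Theta\Phi(v/V) + c_{\nu}\Theta\Phi(\theta/\Theta)$ absorbs the bulk of the $\Theta_{t}$ contributions, and the viscous/dissipative terms reassemble as $\frac{\mu\Theta}{v\theta}\psi_{x}^{2} + \frac{\kappa\Theta}{v\theta^{2}}\zeta_{x}^{2}$.

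The crucial new step is the decomposition $U_{x} = (U_{-}^{r})_{x} + U_{x}^{cd} + (U_{+}^{r})_{x}$ combined with Lemma~2.4(i), which ensures $(U_{\pm}^{r})_{x} \geq 0$. The cross terms of the form $P\Phi(V/v)(U_{\pm}^{r})_{x} + \frac{P}{|\gamma-1|}\Phi(\Theta/\theta)(U_{\pm}^{r})_{x}$ therefore have a definite sign and I would collect them into the expression $Q_{1}\bigl((U_{-}^{r})_{x}+(U_{+}^{r})_{x}\bigr)$ that sits on the left-hand side of \eqref{4.1} as additional rarefaction dissipation. The remaining terms paired with $U^{cd}_{x}$, $(V_{\pm}^{r})_{x}$ and $\Theta_{x}$ carry Gaussian decay via Lemma~2.5, and after a Cauchy--Schwarz splitting identical in spirit to the bound \eqref{Q} on $Q$ in Lemma~3.1, half of $\frac{\mu\Theta}{v\theta}\psi_{x}^{2} + \frac{\kappa\Theta}{v\theta^{2}}\zeta_{x}^{2}$ is absorbed back into the left-hand side. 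What is left is precisely the weighted quadratic integrands $\frac{\kappa\zeta^{2}\Theta_{x}^{2}}{v\theta^{2}\Theta}$, $\frac{\mu\zeta^{2}U_{x}^{2}}{v\theta\Theta}$ and their companions that make up $D(t)$, together with the explicit residual $Q_{2}$ (consisting of lower-order commutators between the contact and rarefaction backgrounds).

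Finally, I would integrate the resulting pointwise inequality over $\mathbb{R}\times(0,t)$. The spatial divergence contributions vanish by decay of $(\phi,\psi,\zeta)$ at infinity; the sources $-F\psi - G\zeta/\theta$ are placed directly into $D(t)$ via their appearance in \eqref{901}; and the initial energy at $t=0$ is precisely $C_{0}$ in \eqref{7312}. The standing assumption $D(t) \leq 2C_{0}$ then converts the composite right-hand side into the advertised $3C_{0}$ upper bound.

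The main obstacle will be the correct bookkeeping: separating the contributions of the three background waves in $U_{x}$ and $\Theta_{t}$ so that the \emph{positive} rarefaction contribution is correctly placed on the left as $Q_{1}((U_{-}^{r})_{x}+(U_{+}^{r})_{x})$, while the \emph{signed} contact and cross-wave contributions are routed either into $D(t)$ (where Lemma~2.5 and Lemma~\ref{21} make them $\delta$-small) or into the explicit residual $Q_{2}$. Once this decomposition is carried out cleanly, the Cauchy--Schwarz absorption step and the integration in $(x,t)$ are essentially routine and mirror the proof of Lemma~3.1.
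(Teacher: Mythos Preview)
Your proposal is correct and follows essentially the same approach as the paper: the same multiplication scheme, the same relative-entropy identity, the decomposition of $U_{x}$ and $\Theta_{t}$ into rarefaction and contact contributions, placing the nonnegative $Q_{1}\bigl((U_{-}^{r})_{x}+(U_{+}^{r})_{x}\bigr)$ on the left via $(U_{\pm}^{r})_{x}\geq 0$, and routing the residual into $Q_{2}$ and $D(t)$ before invoking $D(t)\leq 2C_{0}$. The only cosmetic difference is that the paper writes $Q_{1}=P\bigl(\Phi(\theta V/(v\Theta))+\gamma\Phi(v/V)\bigr)$ rather than the Lemma~3.1-style expression you sketch, but these are algebraically equivalent and both manifestly nonnegative.
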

\begin{proof}
Multiplying \eqref{3302} by $\psi$, it leads that
\begin{equation}
\begin{split}
F\psi=&\big(\frac{\psi^{2}}{2}\big)_{t}+\Big[ (p-P)\psi-\mu\big( \frac{u_{x}}{v}-\frac{U_{x}}{V} \big)\psi \Big]_{x}-\frac{R\zeta}{v}\psi_{x}\\
&-R\Theta\big( \frac{1}{v}-\frac{1}{V} \big)\phi_{t}+\mu\frac{\psi_{x}^{2}}{v}+\mu\big( \frac{1}{v}-\frac{1}{V} \big)U_{x}\psi_{x},
\end{split}
\end{equation}
multiplying \eqref{3303} by $\zeta\theta^{-1}$, one gets
\begin{equation}
\begin{split}
G\frac{\zeta}{\theta}=&\frac{R}{\gamma-1}\frac{\zeta\zeta_{t}}{\theta}-\Big[ \kappa\big( \frac{\theta_{x}}{v}-\frac{\Theta_{x}}{V} \big)\frac{\zeta}{\theta} \Big]_{x}+\frac{R\zeta}{v}\psi_{x}\\
&+(p-P)U_{x}\frac{\zeta}{\theta}+\kappa\frac{\Theta\zeta_{x}^{2}}{v\theta^{2}}-\kappa\frac{\zeta\zeta_{x}\Theta_{x}}{v\theta^{2}}-\kappa\frac{\phi\Theta\Theta_{x}\zeta_{x}}{v\theta^{2}V}\\
&+\kappa\frac{\phi\zeta\Theta_{x}^{2}}{v\theta^{2}V}-\mu\frac{\zeta\psi_{x}^{2}}{v\theta}-2\mu\frac{\zeta U_{x}\psi_{x} }{v\theta}+\mu\frac{\phi\zeta U_{x}^{2}}{v\theta V}.\label{903}
\end{split}
\end{equation}
Noting that
\begin{gather}
-R\Theta\big( \frac{1}{v}-\frac{1}{V} \big)\phi_{t}=\Big[ R\Theta\Phi\big(\frac{v}{V}\big) \Big]_{t}-R\Theta_{t}\Phi\big(\frac{v}{V}\big)+\frac{P\phi^{2}}{vV}V_{t},
\end{gather}

\begin{gather}
    \frac{\zeta\zeta_{t}}{\theta}=\Big[\Theta\Phi\big( \frac{\theta}{\Theta} \big)\Big]_{t}+\Theta_{t}\Phi\big(\frac{\Theta}{\theta}\big),\label{904}
\end{gather}

\begin{equation}
\begin{split}
-R\Theta_{t}&=(\gamma-1)P_{-}(U_{-}^{r})_{x}+(\gamma-1)P_{+}(U_{+}^{r})_{x}-p^{m}U_{x}^{cd}\\
&=(\gamma-1)P(U_{-}^{r})_{x}+(\gamma-1)P(U_{+}^{r})_{x}+(\gamma-1)(P_{-}-P)(U_{-}^{r})_{x}\\
&\quad +(\gamma-1)(P_{+}-P)(U_{+}^{r})_{x}-p^{m}U_{x}^{cd},\notag
\end{split}
\end{equation}
and 
\begin{equation}
\begin{split}
&Q_{1}\big((U_{-}^{r})_{x}+(U_{+}^{r})_{x}\big)+Q_{2}\\
&=-R\Theta_{t}\Phi\big(\frac{v}{V}\big)+\frac{P\phi^{2}}{vV}V_{t}+\frac{R}{\gamma-1}\Theta_{t}\Phi\big(\frac{\Theta}{\theta}\big)+(p-P)U_{x}\frac{\zeta}{\theta},\notag
\end{split}
\end{equation}
where
\begin{equation}
\begin{split}
Q_{1}&=(\gamma-1)P\Phi\big( \frac{v}{V} \big)+\frac{P\phi^{2}}{vV}-P\Phi\big( \frac{\Theta}{\theta} \big)+\frac{\zeta}{\theta}(p-P)=P\Big( \Phi\big( \frac{\theta V}{v\Theta} \big)+\gamma\Phi\big( \frac{v}{V} \big) \Big),\label{4113}
\end{split}
\end{equation}
and 
\begin{equation}
\begin{split}
Q_{2}=&U_{x}^{cd}\Big(
\frac{P\phi^{2}}{vV}-p^{m}\Phi\big( \frac{v}{V} \big)+\frac{p^{m}}{\gamma-1}\Phi\big( \frac{\Theta}{\theta} \big)+\frac{\zeta}{\theta}(p-P)
\Big)\\
&+(\gamma-1)(P_{-}-P)(U_{-}^{r})_{x}\Big(
\Phi\big( \frac{v}{V}\big) -\frac{1}{\gamma-1}\Phi\big( \frac{\Theta}{\theta}\big)
\Big)\\
&+(\gamma-1)(P_{+}-P)(U_{+}^{r})_{x}\Big(
\Phi\big( \frac{v}{V}\big) -\frac{1}{\gamma-1}\Phi\big( \frac{\Theta}{\theta}\big)
\Big).\label{905}
\end{split}
\end{equation}
Combining \eqref{903} and \eqref{904},  following from \eqref{904}-\eqref{905}, one has
\begin{gather}
\Big( \frac{\psi^{2}}{2}+R\Theta\Phi\big( \frac{v}{V} \big)+\frac{R}{\gamma-1}\Theta\Phi\big( \frac{\theta}{\Theta} \big) \Big)_{t}+\frac{\mu\Theta}{v\theta}\psi_{x}^{2}+\frac{\kappa\Theta}{v\theta^{2}}\zeta_{x}^{2}\label{96854}\\
+H_{x}+Q_{1}\big( (U_{-}^{r})_{x}+(U_{+}^{r})_{x} \big)+\bar{Q}=F\psi+G\frac{\zeta}{\theta},\notag
\end{gather}
where
\begin{equation}
\begin{split}
\bar{Q}=&Q_{2}-\kappa\frac{\zeta\zeta_{x}\Theta_{x}}{v\theta^{2}}
-\kappa\frac{\phi\Theta\Theta_{x}\zeta_{x}}{v\theta^{2}V}+\kappa\frac{\phi\zeta\Theta_{x}^{2}}{v\theta^{2}V}-\mu\frac{\phi\psi_{x}U_{x}}{vV}-2\mu\frac{\zeta\psi_{x}U_{x}}{v\theta}+\mu\frac{\phi\zeta U_{x}^{2}}{v\theta V}.
\end{split}
\end{equation}
Recalling  Lemma 2.5, one gives
\begin{equation}
\begin{split}
&|(P_{-}-P)(U_{-}^{r})_{x}|\\
&\leq C\Big(
|\Theta^{cd}-\theta_{-}^{m}|+|\Theta_{+}^{r}-\theta_{+}^{m}|+|V^{cd}-v_{-}^{m}|+|V_{+}^{r}-v_{+}^{m}|
\Big)\big|(U_{-}^{r})_{x}\big|\\
&\leq C \Big(
|\Theta^{cd}-\theta_{-}^{m}|+|\Theta_{+}^{r}-\theta_{+}^{m}|+|V^{cd}-v_{-}^{m}|+|V_{+}^{r}-v_{+}^{m}|\Big)\Big|_{\Omega_{-}}+C\big|(U_{-}^{r})_{x}\big|\Big|_{\Omega_{c}\cap\Omega_{+}}\\
&\leq C\delta e^{-c_{0}(|x|+t)}.\label{4.18}
\end{split}
\end{equation}
Thus, one obtains 
\begin{equation}
\begin{split}
|Q_{2}|&\leq C\Big(
\frac{\phi^{2}}{v}+\Phi\big(\frac{v}{V}\big)+\Phi\big(\frac{\Theta}{\theta}\big)+\frac{\phi^{2}+\zeta^{2}}{v\theta}
\Big)|U_{x}^{cd}| +C\delta e^{-c_{0}(|x|+t)}\Big(
\Phi\big(\frac{v}{V}\big)+\Phi\big(\frac{\Theta}{\theta}\big)
\Big),
\end{split}
\end{equation}
and 
\begin{equation}
\begin{split}
|Q|&\leq \frac{1}{2}\big(
\frac{\mu\Theta}{v\theta}\psi_{x}^{2}+\frac{\kappa\Theta}{v\theta^{2}}\zeta_{x}^{2}
\big)+\big(\frac{\kappa\zeta^{2}}{v\theta^{2}\Theta}+\frac{\kappa\Theta\phi^{2}}{v\theta^{2}V^{2}}+\frac{\kappa|\phi\zeta|}{v\theta^{2}V}\big)\Theta_{x}^{2}\\
&\quad +\big( \frac{\mu\theta\phi^{2}}{v\Theta V^{2}}+\frac{4\mu\zeta^{2}}{v\theta\Theta}+\frac{\mu|\phi\zeta|}{v\theta V} \big)U_{x}^{2}+|Q_{2}|.\label{4.20}
\end{split}
\end{equation}
Integrating \eqref{96854} over $\mathbb{R}\times(0,t)$, by using \eqref{4.18}-\eqref{4.20}, one gets
\begin{eqnarray}
&&\int\Big( \frac{\psi^{2}}{2}+R\Theta\Phi\big( \frac{v}{V} \big)+\frac{R}{\gamma-1}\Theta\Phi\big( \frac{\theta}{\Theta} \big) \Big)dx+\frac{1}{2}\int_{0}^{t}\int\big( \frac{\mu\Theta}{v\theta}\psi_{x}^{2}+\frac{\kappa\Theta}{v\theta^{2}}\zeta_{x}^{2} \big)dxds\notag\\
&&\quad +\int_{0}^{t}\int Q_{1}\big( (U_{-}^{r})_{x}+(U_{+}^{r})_{x} \big)dxds\notag\\
&&\leq \int_{0}^{t}\int\big(\frac{\kappa\zeta^{2}}{v\theta^{2}\Theta}+\frac{\kappa\Theta\phi^{2}}{v\theta^{2}V^{2}}+\frac{\kappa|\phi\zeta|}{v\theta^{2}V}\big)\Theta_{x}^{2}dxds+\int_{0}^{t}\int\big( \frac{\mu\theta\phi^{2}}{v\theta V^{2}}+\frac{4\mu\zeta^{2}}{v\theta\Theta}+\frac{\mu|\phi\zeta|}{v\theta V} \big)U_{x}^{2}dxds\notag\\
&&\quad +\int_{0}^{t}\int \big( |F\psi|+|G\frac{\zeta}{\theta}| \big)dxds+\int\Big( \frac{\psi_{0}^{2}}{2}+R\Theta_{0}\Phi\big( \frac{v_{0}}{V_{0}} \big)+\frac{R}{\gamma-1}\Theta_{0}\Phi\big( \frac{\theta_{0}}{\Theta_{0}} \big) \Big )dx\notag\\
&&\quad +\int_{0}^{t}\int|Q_{2}|dxds\leq 3C_{0},\notag
\end{eqnarray}
which \eqref{4.1} is derived, and  the proof of lemma 4.1 is completed.
\end{proof}
 
 \vskip 0.3cm
Similarity as the proof in  Section 3, the following estimates hold:
\begin{gather}
    \int\big(\phi_{x}^{2}+\zeta^{2}+\psi^{4}\big)dx
+\int_{0}^{T}\int\Big(\big(\psi_{x}^{2}+\phi_{x}^{2}\big)\theta
+\psi^{2}\psi_{x}^{2}+\zeta_{x}^{2}\Big)dxdt\leq C,\label{as}
\end{gather}
\begin{gather}
    \tilde\sigma\int\psi_{x}^{2}dx+\tilde\sigma^{2}\int\zeta_{x}^{2}dx+\int_{0}^{T}\tilde\sigma\int\psi_{xx}^{2}dxds
+\int_{0}^{T}\tilde\sigma^{2}\int\zeta_{xx}^{2}dxds\leq C,
\end{gather}
and
\begin{gather}
    C^{-1}\leq v(x,t)\leq C,\quad C^{-1}\leq \theta(x,t),\quad (x,t)\in\mathbb{R}\times (0,+\infty),\label{ass}
\end{gather}
where $C$ depending only on $\inf_{x\in \mathbb{R}}v_{0}$, $\inf_{x\in\mathbb{R}}\theta_{0}$, $\|\phi_{0}\|_{H^{1}(\mathbb{R})}$, $\|\psi_{0}\|_{L^{2}(\mathbb{R})}$, $\|\psi_{0}\|_{L^{4}(\mathbb{R})}$, $\|\zeta_{0}\|_{L^{2}(\mathbb{R})}$, $C_{0}$.

\vskip 0.3cm
{\bf Proof of Proposition 4.1.}
First, recalling Lemma 2.4, Lemma 2.5, by using \eqref{as}-\eqref{ass}, it holds that
\begin{equation}
\begin{split}
& \int_{0}^{t}\int\big(\frac{\kappa\zeta^{2}}{v\theta^{2}\Theta}+\frac{\kappa\Theta\phi^{2}}{v\theta^{2}V^{2}}+\frac{\kappa|\phi\zeta|}{v\theta^{2}V}\big)\Theta_{x}^{2}dxds+\int_{0}^{t}\int\big( \frac{\mu\theta\phi^{2}}{v\Theta V^{2}}+\frac{4\mu\zeta^{2}}{v\theta\Theta}+\frac{\mu|\phi\zeta|}{v\theta V} \big)U_{x}^{2}dxds\\
&\leq C\int_{0}^{t}\int(\phi^{2}+\zeta^{2})(\Theta_{x}^{2}+U_{x}^{2})dxds\\
&\leq  C\int_{0}^{t}\int(\phi^{2}+\zeta^{2})((\Theta_{x}^{cd})^{2}+
(U_{-}^{r})_{x}^{2}+(U_{+}^{r})_{x}^{2})dxds\\
&\leq C\delta\int_{0}^{t}\int\frac{\phi^{2}+\zeta^{2}}{1+s}e^{-c_{1}x^{2}/(1+s)}dxds+C\delta\int_{0}^{t}\int(\phi^{2}+\zeta^{2})e^{-c_{0}(|x|+s)}dxds\\
&\leq C\delta\int_{0}^{t}\int\frac{\phi^{2}+\zeta^{2}}{1+s}e^{-c_{1}x^{2}/(1+s)}dxds+C\delta.\label{921}
\end{split}
\end{equation}
Next, by direct calculation, one has
\begin{gather}
    \|(F,G)\|_{L^{1}(\mathbb{R})}\leq C\delta^{1/8}(1+t)^{-7/8},
\end{gather}
which along with \eqref{as}-\eqref{ass} implies that
\begin{equation}
\begin{split}
&\int_{0}^{t}\int \big( |F\psi|+\big|G\frac{\zeta}{\theta}\big| \Big)dxds\\
&\leq C\int_{0}^{t}\big(\|\psi\|_{L^\infty(\mathbb{R})}+\|\zeta\|_{L^\infty(\mathbb{R})}\big)\int\big(|F|+|G|\big)dxds\\
&\leq C\delta\int_{0}^{t}\frac{1}{(1+s)^{7/8}}\left(\|\zeta_{x}\|^{1/2}+\|\psi_{x}\|^{1/2}\right)ds\\
&\leq C\delta\int_{0}^{t}\frac{1}{(1+s)^{7/6}}ds+C\delta\int_{0}^{t}\int(\psi_{x}^{2}+\zeta_{x}^{2})dxds\leq C\delta.
\end{split}
\end{equation}
Moreover, duo to \eqref{as}-\eqref{ass}, one gets
\begin{equation}
\begin{split}
&\int_{0}^{t}\int|Q_{2}|dxds\\
&\leq C\int_{0}^{t}\int(\phi^{2}+\zeta^{2})|U_{x}^{cd}|dxds+ C\delta\int_{0}^{t}\int(\phi^{2}+\zeta^{2})e^{-c_{0}(|x|+s)}dxds\\
&\leq C\delta\int_{0}^{t}\int\frac{\phi^{2}+\zeta^{2}}{1+s}e^{-c_{1}x^{2}/(1+s)}dxds+C\delta.
\end{split}
\end{equation}
Finally, noting that
\begin{equation}
\begin{split}
&\int_{0}^{t}\int\frac{\phi^{2}+\zeta^{2} }{1+s}e^{-\frac {c_{1}x^{2}}{1+s}}dxds\\
&\leq C\int_{0}^{t}\int(\phi_{x}^{2}+\psi_{x}^{2}+\zeta_{x}^{2})dxds+C\int_{0}^{t}\int(\phi^{2}+\zeta^{2})((U_{-}^{r})_{x}+(U_{+}^{r})_{x})dxds+C\\
&\leq C_0,\label{922}
\end{split}
\end{equation}
combining with \eqref{921}-\eqref{922}, one obtains
\begin{equation}
\begin{split}
 D(t)=&\int_{0}^{t}\int\big(\frac{\kappa\zeta^{2}}{v\theta^{2}\Theta}+\frac{\kappa\Theta\phi^{2}}{v\theta^{2}V^{2}}+\frac{\kappa|\phi\zeta|}{v\theta^{2}V}\big)\Theta_{x}^{2}dxds+\int_{0}^{t}\int\big( \frac{\mu\theta\phi^{2}}{v\theta V^{2}}+\frac{4\mu\zeta^{2}}{v\theta\Theta}+\frac{\mu|\phi\zeta|}{v\theta V} \big)U_{x}^{2}dxds\\
& \quad +\int_{0}^{t}\int \big( |F\psi|+|G\frac{\zeta}{\theta}| \big)dxds+\int_{0}^{t}\int|Q_{2}|dxds\leq \delta C\leq C_{0}.\notag
\end{split}
\end{equation}
where the last inequality is obtained by choosing a sufficiently small $\delta_{0}$ which only depending on 
$\inf_{x\in \mathbb{R}}v_{0}$, $\inf_{x\in\mathbb{R}}\theta_{0}$, $\|\phi_{0}\|_{H^{1}(\mathbb{R})}$,  $\|\psi_{0}\|_{L^{2}(\mathbb{R})}$, $\|\psi_{0}\|_{L^{4}(\mathbb{R})}$, $\|\zeta_{0}\|_{L^{2}(\mathbb{R})}$,
such that  $\delta<\delta_{0}$. As so far, the proof of Proposition 4.1 is completed. It is easy to check that the other estimates for a single viscous contact wave still hold for the case in which the composite waves are the combination of viscous contact wave with rarefaction waves. Thus we finish the proof of Proposition 3.1, and so the proof of Theorem 1.2 is completed.

\vskip 0.3cm {\bf Conflict of Interest}\quad The authors declare that they have no conflict of interest.

\end{document}